\documentclass[12pt]{amsart}
\usepackage{amssymb,latexsym,amsmath,enumitem,mathrsfs,geometry}
\usepackage{thmtools}
\usepackage{fullpage}
\usepackage{fancyhdr}
\usepackage{xcolor}
\usepackage{bbm}
\usepackage{stmaryrd}
\usepackage{mathrsfs}
\usepackage{hyperref}
\usepackage{lineno}
\usepackage{diagbox}
\usepackage{array}
\usepackage{tikz}
\usetikzlibrary{arrows}
\usetikzlibrary{positioning, arrows.meta, shapes.geometric}
\usepackage{float}
\usepackage{comment}
\usepackage{mathtools}
\usepackage{amsthm}
\usepackage{cleveref}

\newtheorem{theorem}{Theorem}[section]
\newtheorem*{theorem*}{Theorem}

\newtheorem{lemma}[theorem]{Lemma}
\newtheorem*{remark*}{Remark}

\newtheorem{corollary}[theorem]{Corollary}
\newtheorem{proposition}[theorem]{Proposition}

\newtheorem{definition}[theorem]{Definition}
\newtheorem{remark}[theorem]{Remark}

\newcommand{\eps}{\varepsilon}
\newcommand{\R}{\mathbb{R}}
\newcommand{\C}{\mathbb{C}}
\newcommand{\Q}{\mathbb{Q}}

\newcommand{\E}{\mathbf{E}}
\renewcommand{\P}{\mathbf{P}}
\newcommand{\Var}{\mathbf{Var}}
\newcommand{\Cov}{\mathbf{Cov}}
\newcommand{\N}{\mathbb{N}}
\newcommand{\Z}{\mathbb{Z}}

\begin{document}

\numberwithin{equation}{section}

\title{Products of consecutive integers with unusual anatomy}

\author[Tao]{Terence Tao}
\address{Department of Mathematics, UCLA, 405 Hilgard Ave, Los Angeles CA 90024}
\email{tao@math.ucla.edu}

\keywords{}
\subjclass[2020]{11B65, 11N25}
\thanks{}

\date{\today}

\begin{abstract} Call an interval $\{N+1,\dots,N+H\}$ of consecutive natural numbers \emph{bad} if the product $(N+1) \dots (N+H)$ is divisible by the square of its largest prime factor; \emph{very bad} if this product is powerful, and \emph{type $F_3$} if it has the same squarefree component as a factorial.  Such concepts arose in the analysis of the factorial equation $a_1! a_2! a_3! = m^2$ with $a_1<a_2<a_3$.  Answering several questions of Erd\H{o}s and Graham, we obtain asymptotics for the number of integers contained in bad or very bad intervals, and to get near-asymptotics for the number of right endpoints of a type $F_3$ interval, or on the number of solutions to $a_1! a_2! a_3! = m^2$.
\end{abstract}

\maketitle

\section{Introduction} \label{intro}

Given an interval $\{N+1,\dots,N+H\}$ of natural numbers (with $N \geq 0$ and $H \geq 1$), define their \emph{product} to be the quantity
\begin{equation}\label{nhint}
 (N+1) \dots (N+H) = H! \binom{N+H}{H} = \frac{N! (N+H)!}{(N!)^2}.
 \end{equation}
We will be interested in the \emph{anatomy} of such products, by which we mean the properties of their prime factorization.  Such questions have been studied extensively in the literature, in part due to the obvious connection with multiplicative Diophantine equations involving factorials or binomial coefficients.  We recall two classical results in this area:

\begin{theorem}\label{classical}\
    \begin{itemize}
        \item[(i)] (Sylvester--Schur theorem) If $N > H$, then the largest prime factor of \eqref{nhint} exceeds $H$.
        \item[(ii)] (Erd\H{o}s--Selfridge theorem) If $H \geq 2$, then the product \eqref{nhint} is never a perfect power.
    \end{itemize}
\end{theorem}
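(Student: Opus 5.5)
Both parts are classical and the paper presumably cites them, so the plan is to recall the standard arguments rather than reprove them in detail. Throughout write $k = H$ and $n = N+H$, so that \eqref{nhint} equals $k!\binom{n}{k}$.

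\textbf{Part (i).} Since every prime factor of $k!$ is at most $k$, it suffices to show that $\binom{n}{k}$ has a prime factor exceeding $k$ whenever $n \geq 2k$; the hypothesis $N > H$ gives $n = N+H > 2k$. I would follow Erdős's elementary argument. Suppose every prime factor of $\binom{n}{k}$ is at most $k$.
\begin{itemize}
\item By Legendre's formula, every prime power $p^a$ dividing $\binom{n}{k}$ satisfies $p^a \leq n$.
\item Primes $p > \sqrt{2n}$ therefore divide $\binom{n}{k}$ to at most the first power.
\item Combining these with the Chebyshev bound $\prod_{p \leq k} p \leq 4^k$ gives
\[ \binom{n}{k} \leq n^{\pi(\sqrt{2n})}\, 4^{k}. \]
\item On the other hand $\binom{n}{k} \geq (n/k)^k$.
\end{itemize}
For $n \geq k^2$ one instead uses the lower bound $\binom{n}{k} \geq (n/k)^k$ against the bound $n^{\pi(k)}$, which holds since every prime power dividing $\binom{n}{k}$ is at most $n$. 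In both ranges the inequalities are incompatible once $k$ exceeds an explicit constant, and the finitely many small $k$ are checked by hand, for instance via Bertrand-type prime intervals.

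\textbf{Part (ii).} Suppose $(n-k+1)\cdots n = y^\ell$ with $\ell \geq 2$ and $k \geq 2$. A prime $p \geq k$ divides at most one factor $N+i$, so it must divide that factor to a multiple of the $\ell$-th power. Hence we can write $N+i = a_i x_i^\ell$, where $a_i$ is $\ell$-th-power-free with all prime factors less than $k$. The strategy of Erdős--Selfridge is then:
\begin{itemize}
\item Show that the $a_i$ are essentially distinct. If $a_i = a_j$ with $i < j$, then $k > (N+j)-(N+i) = a_i(x_j^\ell - x_i^\ell) \geq \ell\, a_i x_i^{\ell-1}$, which forces $N$ to be small relative to $k$.
\item Derive a contradiction from a size/divisibility comparison. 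A standard Sylvester-type trick gives $\prod a_i \mid (k-1)!$ after removing, for each prime, one term where it is most divisible. Conversely, $k$ distinct $\ell$-th-power-free $a_i$ have product at least comparable to $k!$.
\item Handle small $N$ directly: there part (i) or Bertrand's postulate produces a prime dividing the product exactly once.
\end{itemize}

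The main obstacle is the case $\ell = 2$ with only slightly-distinct $a_i$. Here the counting is tight, and Erdős--Selfridge need a delicate auxiliary construction: they form polynomials and products in the $x_i$ to produce an integer strictly between consecutive squares. I would follow their paper for that step rather than attempt a new argument.
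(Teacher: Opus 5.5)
Deferring to the classical proofs is exactly what the paper does. It gives only citations: Sylvester and Schur (with Erd\H{o}s and others for alternative proofs) for (i), and Erd\H{o}s--Selfridge for (ii). As a citation, then, your proposal matches. The arguments you sketch in support, however, fail as written at two specific points.

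\textbf{Part (i).} Your bounds $\binom{n}{k}\ge (n/k)^k$ and $\binom{n}{k}\le n^{\pi(\sqrt{2n})}4^k$ are \emph{not} incompatible when $2k<n\le 4k$. At $n=2k+1$ the lower bound is only about $2^k$. A better lower bound does not help on its own, since $\binom{2k+1}{k}<4^k$; near $n=2k$ it is the upper bound that must be sharpened.

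The standard fix is the trick from Erd\H{o}s's proof of Bertrand's postulate. For $2k\le n<3k$ and $n$ large, no prime $p\in(n/3,k]$ divides $\binom{n}{k}$, because $p^2>n$, $\lfloor n/p\rfloor=2$ and $\lfloor k/p\rfloor=\lfloor (n-k)/p\rfloor=1$. This gives $\binom{n}{k}\le n^{\pi(\sqrt n)}4^{n/3}$, which is smaller than $\binom{n}{\lceil n/3\rceil}=(27/4)^{n/3+o(n)}\le\binom{n}{k}$. For $n\ge 3k$ one uses $\binom{n}{k}\ge\max\{(n/k)^k,\binom{3k}{k}\}$ with $\binom{3k}{k}=(27/4)^{k+o(k)}$.

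Separately, ``finitely many small $k$ checked by hand'' is not yet a finite check, since each $k$ a priori leaves infinitely many $n$. You first need, for example, that $(n/k)^k\le n^{\pi(k)}$ with $\pi(k)<k$ bounds $n$ in terms of $k$.

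\textbf{Part (ii).} The size comparison in your second bullet yields no contradiction. Sylvester's trick controls only the surviving index set $S$, with $|S|\ge k-\pi(k)$, so you cannot compare against the product of all $k$ of the $a_i$. For the survivors, distinctness and $\ell$-th-power-freeness give at best $\prod_{i\in S}a_i\ge |S|!\,\zeta(\ell)^{|S|}e^{o(k)}$. On the other hand, $(k-1)!/(k-\pi(k))!=k^{\pi(k)}e^{o(k)}=e^{(1+o(1))k}$ and $\zeta(\ell)\le \pi^2/6<e^{1/2}$. So the lower bound falls short of $(k-1)!$ by a factor of at least $e^{(1/2+o(1))k}$.

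For $\ell=2$ this can be rescued by using squarefreeness on the upper side as well: $\prod_{i\in S}a_i\le\prod_{p<k}p^{\lfloor (k-1)/p\rfloor}=(k-1)!\,e^{-(c+o(1))k}$ with $c=\sum_p\frac{\log p}{p(p-1)}=0.755\ldots$. Since $c+\log(\pi^2/6)>1$, the count closes for large $k$. For $\ell\ge 3$ the analogous quantity $\sum_p\frac{\log p}{p^{\ell-1}(p-1)}+\log\zeta(\ell)$ is below $1/2$, so the two bounds are compatible for every large $k$.

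The real obstruction is therefore the higher powers (together with small $k$), not ``$\ell=2$ with slightly-distinct $a_i$''. Since you import the Erd\H{o}s--Selfridge argument for the hard step anyway, the proposal survives as a citation. But the sketch misidentifies where that argument is actually needed.
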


\begin{proof}  For (i), see \cite{sylvester}, \cite{schur} (as well as \cite{erdos-sylvester}, \cite{moser}, \cite{faulkner}, \cite{hanson} for alternate proofs and refinements).  For (ii), see \cite{selfridge}.
\end{proof}

The following unusual anatomical features of \eqref{nhint} were studied in the work of Erd\H{o}s and Graham \cite{erdos-graham}, \cite[p. 70, 71, 73]{eg}:

\begin{definition}[Bad, very bad, $F_3$ intervals]\label{def-bad}  Let $N, H$ be natural numbers.
\begin{itemize}
    \item[(i)] An interval $\{N+1,\dots,N+H\}$ is \emph{bad} if its product \eqref{nhint} is divisible by the square of its largest prime factor\footnote{We adopt the convention that $1$ is not divisible by the square of its largest prime factor, as it has no such factors.}.  For instance, $\{24,25\}$ is bad because $24 \times 25$ is divisible by the square of the largest prime factor $5$.
    \item[(ii)] An interval $\{N+1,\dots,N+H\}$ is \emph{very bad} if its product \eqref{nhint} is \emph{powerful} (or \emph{squarefull}), that is to say that every prime that divides this product, divides it at least twice.  For instance, $\{8,9\}$ is very bad because $8 \times 9 = 2^3 \times 3^2$ is powerful.
    \item[(iii)] An interval $\{N+1,\dots,N+H\}$ is of \emph{type $F_3$} if there exists $1 \leq a < N$ such that the product \eqref{nhint} has the same squarefree component\footnote{The squarefree component $s(n)$ of a natural number is the largest natural number of the form $n/m^2$ for some perfect square $m^2$.} as $a!$.  Equivalently, by \eqref{nhint}, there is a solution in the natural numbers to the equation
\begin{equation}\label{f3-eq}
    a_1! a_2! a_3! = m^2; \quad a_1 < a_2 < a_3
\end{equation}
with $(a_2,a_3)=(N,N+H)$. For instance, $\{8,9,10\}$ is of type $F_3$ because $s(8 \times 9 \times 10) = 5 = s(6!)$, or because $6! 7! 10! = (6!\times 7!)^2$ is a solution to \eqref{f3-eq}.
\end{itemize}
\end{definition}

\begin{remark} As already noted, type $F_3$ intervals were introduced in \cite{erdos-graham} to analyze the solutions to the multiplicative factorial Diophantine equation \eqref{f3-eq}; the most difficult case in their analysis occurred when the type $F_3$ interval was also bad.  Very bad intervals are a subclass of bad intervals that were connected to several further conjectures of Erd\H{o}s and Erd\H{o}s--Selfridge on powerful numbers \cite{erdos76}, \cite{erdos-consecutive}, \cite{eg}; as we shall see in this paper, the study of very bad intervals also serves a toy model for the study of type $F_3$ intervals.  One can also define type $F_k$ intervals for other values of $k$ than $3$, relating to solutions to the equation $a_1! \dots a_k! = m^2$; we will not consider such concepts here, but refer the reader to \cite{erdos-graham}.
\end{remark}

Following\footnote{In the notation of \cite{erdos-graham}, \cite{eg}, $F_k = \bigcup_{j=1}^k {\mathcal F}_j$ and $D_k = F_k \backslash F_{k-1}$ for all $k$.  In \cite[p. 345]{erdos-graham}, elements of ${\mathcal B}^1$ and  $\mathcal{VB}$ are called \emph{bad'} and \emph{bad''}, respectively.} \cite{eg}, we associate various sets of natural numbers to these properties:

\begin{definition}[Bad, very bad, $F_3$ sets]\label{def-sets}\
\begin{itemize}
    \item[(i)] We let ${\mathcal B}$ denote the set of natural numbers $n$ that are contained in at least one bad interval $\{N+1,\dots,N+H\}$.
    \item[(ii)] We let $\mathcal{VB}$ denote the set of natural numbers $n$ that are contained in at least one very bad interval $\{N+1,\dots,N+H\}$.
    \item[(iii)] We let ${\mathcal F}_3$ denote the set of natural numbers $n$ that are of the form $N+H$ for some type $F_3$ interval $\{N+1,\dots,N+H\}$.  Equivalently, there exists a solution to \eqref{f3-eq} with $a_3 = n$.
\end{itemize}
By restricting to the case $H=1$, we can define simpler sets ${\mathcal B}^1$, $\mathcal{VB}^1$, ${\mathcal F}_3^1$ inside each of ${\mathcal B}$, $\mathcal{VB}$, ${\mathcal F}_3$, consisting of natural numbers with unusual anatomy:
\begin{itemize}
   \item[$(i)_1$] ${\mathcal B}$ contains the set ${\mathcal B}^1$ of natural numbers $n$ that are divisible by the square of their largest prime factor.
   \item[$(ii)_1$] $\mathcal{VB}$ contains the set $\mathcal{VB}^1$ of powerful numbers.
   \item[$(iii)_1$] ${\mathcal F}_3$ contains the set ${\mathcal F}_3^1$ of natural numbers $n$ for which $n$ has the same squarefree component $s(n)=s(a!)$ as $a!$ for some $1 \leq a < n-1$.
\end{itemize}
\end{definition}

\begin{figure}
    \centering
\includegraphics[width=0.75\textwidth]{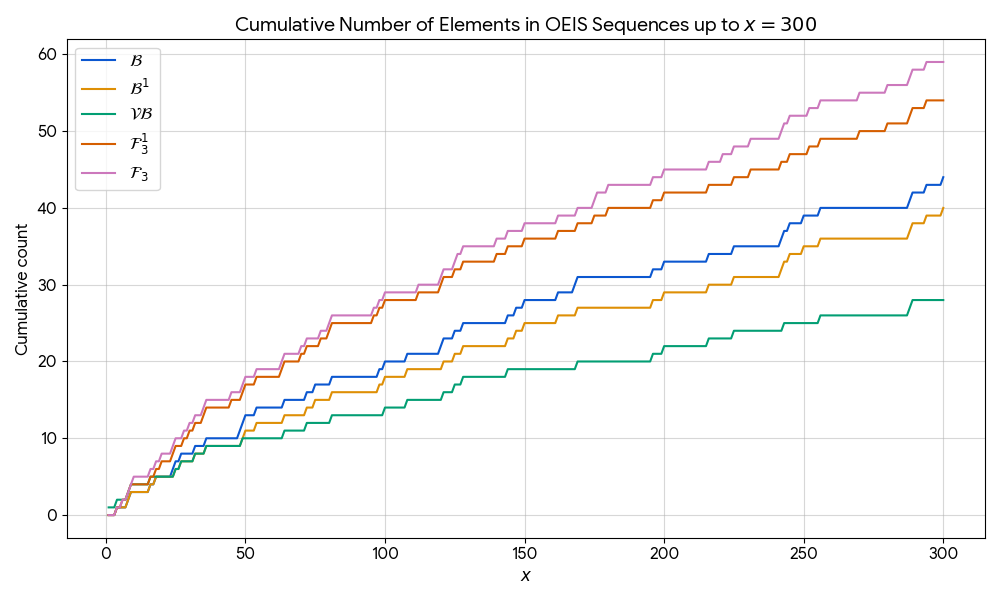}
    \caption{Plots of $\#({\mathcal A} \cap [1,x])$ for ${\mathcal A} = {\mathcal B}, {\mathcal B}^1, \mathcal{VB}, {\mathcal F}_3, {\mathcal F}_3^1$.  The set $\mathcal{VB}^1$ is not depicted as it numerically coincides with $\mathcal{VB}$. (Image generated by Gemini using data from the OEIS.)}
    \label{fig-cumulative}
\end{figure}

It was conjectured in \cite{eg} that these simpler sets have the same asymptotic size as their more complicated containing sets, thus\footnote{See \Cref{notation-sec} for our conventions on asymptotic notation.}
\begin{align}
    \#({\mathcal B} \cap [1,x]) &\sim \#({\mathcal B}^1 \cap [1,x])\label{b-conj}\\
    \#(\mathcal{VB} \cap [1,x]) &\sim \#(\mathcal{VB}^1 \cap [1,x])\label{vb-conj}\\
    \#({\mathcal F}_3 \cap [1,x]) &\sim \#({\mathcal F}_3^1 \cap [1,x])\label{f3-conj}
\end{align}
as $x \to \infty$.  See \Cref{fig-cumulative}, \Cref{fig-bad}, \Cref{fig-vb}, \Cref{fig-f3} for some numerical confirmation of these conjectures.

The purpose of this paper is to establish the first two conjectures (with some quantitative improvements on the error terms), and make partial progress on the third; we now discuss each of them in turn.

\subsection{Bad intervals}

The first few elements of the set ${\mathcal B}^1$ are
\begin{equation}
1, 4, 8, 9, 16, 18, 25, 27, 32, 36, 49, 50, 54, 64, 72, \dots \tag{${\mathcal B}^1$}
\end{equation}
(\href{https://oeis.org/A070003}{OEIS A070003}).  The asymptotics of this set are well understood, largely thanks to the very well studied asymptotics of smooth numbers.  For a large number $x$, define the double and triple logarithms
$$ \log_2 x \coloneqq \log\log x, \quad \log_3 x \coloneqq \log\log\log x,$$
the exponent
\begin{equation}\label{u-def}
 u_0 \coloneqq \frac{\sqrt{2} \log^{1/2} x}{\log^{1/2}_2 x},
\end{equation}
and the smoothness threshold
\begin{equation}\label{z-def}
 z \coloneqq \exp\left( \frac{1}{\sqrt{2}} \log^{1/2} x \log^{1/2}_2 x\right) = x^{1/u_0}.
\end{equation}
For future reference we observe the hierarchy of scales
$$ 1 \ll \log_2 z \asymp \log_2 x \ll \log z = \log^{1/2+o(1)} x \ll \log x \ll z \ll x.$$

\begin{remark}
The critical nature of these choices of $u_0$ and $z$ arise from the approximate identity
\begin{equation}\label{upow}
    u_0^{u_0} = z^{1+o(1)}
\end{equation}
(so that the Dickman function $\rho(u_0)$ is of size $z^{-1+o(1)}$).
\end{remark}

\begin{lemma}[Asymptotics for ${\mathcal B}^1$]\label{b-asym}\
    \begin{itemize}
        \item[(i)]      One has
\begin{align*}
\#({\mathcal B}^1 \cap [1,x]) &= \frac{x}{z^{2+o(1)}}\\
&= \frac{x}{\exp\left( (\sqrt{2}+o(1)) \log^{1/2} x \log^{1/2}_2 x\right)}
\end{align*}
as $x \to \infty$.
        \item[(ii)] One has the stability estimate
\begin{equation}\label{bo}
\#({\mathcal B}^1 \cap [1,cx]) \asymp \#({\mathcal B}^1 \cap [1,x])
\end{equation}
whenever $c \asymp 1$ and $x$ is sufficiently large.
    \end{itemize}
\end{lemma}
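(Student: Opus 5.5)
We will write every $n \in {\mathcal B}^1 \cap [1,x]$ as $n = p^2 m$, where $p = P(n)$ is the largest prime factor of $n$ and $m$ is $p$-smooth. This gives the exact count
$$ \#({\mathcal B}^1 \cap [1,x]) = 1 + \sum_{p} \Psi(x/p^2, p),$$
where $\Psi(y,w)$ denotes the number of $w$-smooth integers up to $y$. We will then feed in standard smooth-number asymptotics, namely the estimate of de Bruijn (or the Canfield--Erd\H{o}s--Pomerance uniform version)
$$\Psi(y,w) = y \rho(u)^{1+o(1)} = y u^{-u(1+o(1))}, \qquad u = \log y/\log w,$$
valid uniformly for $(\log y)^{1+\eps} \le w \le y$. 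The whole problem thus reduces to optimizing $p^{-2} u^{-u}$ over $p$ with $u \approx \log x/\log p$, which is where the critical scale $z$ and the identity \eqref{upow} come from.

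\textbf{Lower bound.} We restrict to primes $p \in [z, 2z]$. For these, $u = (\log x - 2\log p)/\log p = u_0 + O(1)$, and so $u^u = u_0^{u_0 + O(1)} = z^{1+o(1)}$ by \eqref{upow}. Each such $p$ contributes $\gg x p^{-2} z^{-1-o(1)} = x z^{-3-o(1)}$. There are $\gg z/\log z$ such primes, and $\log z = z^{o(1)}$, so the total is $\geq x z^{-2-o(1)}$.

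\textbf{Upper bound.} We split the sum over $p$ into ranges.
\begin{itemize}
\item For $p > z^{1+\delta}$ we use the trivial bound $\Psi \le x/p^2$, which sums to at most $x z^{-1-\delta}$. This is acceptable only if we choose $\delta \to 1$; more precisely, we should simply use the smooth-number bound in all ranges.
\item Write $p = z^{t}$ with $t$ in a compact subrange of $(0,\infty)$. Then $u \approx u_0/t$ and $u^u = z^{(1+o(1))/t}$, because $\log u \sim \log u_0$ whenever $t$ is not too extreme. The contribution from the dyadic range at $p \sim z^t$ is then $x z^{-t - 1/t + o(1)}$, and since $t + 1/t \geq 2$ this is at most $x z^{-2+o(1)}$.
\item Extreme ranges: for $p \le \exp((\log x)^{\eps})$, that is $t \to 0$ fast, we will use Rankin's bound $\Psi(y,w) \le y^{\sigma} \prod_{q\le w}(1-q^{-\sigma})^{-1}$ (or simply $\Psi(x,w) \le x \exp(-c u \log u)$ down to $w \geq \log^2 x$, together with the trivial count $\Psi(x,w) \le (\log x)^{\pi(w)}$ for tiny $w$). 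This gives $x^{1-c}$ or better, which is negligible. For $t \to \infty$, the trivial bound $\sum_{p > z^{T}} x/p^2 \le x z^{-T}$ with $T$ large suffices.
\end{itemize}
Summing over $O(\log x)$ dyadic ranges costs only a factor $z^{o(1)}$. The main technical obstacle is ensuring this uniformity across ranges: the asymptotic for $\rho(u)$ must hold uniformly in $t$ in compact sets with $o(1)$ errors in the exponent, and the transitional regimes must be patched. Choosing $t \in [\eps, 1/\eps]$ and then letting $\eps \to 0$ slowly handles this, because the edge contributions are $x z^{-1/\eps}$ or smaller.

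\textbf{Stability (ii).} For a fixed $c \asymp 1$, the $o(1)$ asymptotics in (i) are too weak, so we will argue directly from the identity. For $c \ge 1$ the bound $\#({\mathcal B}^1\cap[1,cx]) \ge \#({\mathcal B}^1\cap[1,x])$ is trivial. For the reverse inequality it suffices, by monotonicity, to treat $c = 2$. Since the counts are monotone in $c$, the case $c \le 1$ then follows by applying the case $c = 2$ to $x/2^k$, iterating a bounded number of times. So we need
$$\sum_p \Psi(2x/p^2,p) \ll \sum_p \Psi(x/p^2,p).$$
This follows from the smooth-number doubling property $\Psi(2y,w) \ll \Psi(y,w)$, which holds uniformly for $w \ge 2$. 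One way to see it is the Hildebrand--Tenenbaum local estimate $\Psi(cy,w) \asymp_c \Psi(y,w)$, obtained via the saddle point $\alpha(y,w)$, which gives $\Psi(cy,w) \le c^{\alpha}\Psi(y,w)(1+o(1))$. Alternatively, we can use the elementary injection $m \mapsto$ ``$m$ times the smallest prime'', which is too lossy, so it is cleaner to cite Hildebrand--Tenenbaum. The small-$y$ terms, where $x/p^2 < 1$ but $2x/p^2 \ge 1$, contribute $O(\pi(\sqrt{2x}))$, which is negligible against the lower bound $x z^{-2-o(1)}$ from (i).
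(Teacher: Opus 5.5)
Your proof of (i) follows the paper's. You use the same decomposition \eqref{box}, smooth-number asymptotics at $p=z^{t}$ giving $x z^{-t-1/t+o(1)}$ per dyadic range, the bound $t+1/t\ge 2$ (the paper's AM--GM step), the trivial bound for $p\ge z^2$, and the lower bound from $p\asymp z$. The paper fixes the window $t\in[1/2,2]$ and handles every $p\le z^{1/2}$ by monotonicity, $\Psi(x/p^2,p)\le\Psi(x/p^2,z^{1/2})\le (x/p^2)z^{-2+o(1)}$, so no Rankin-type treatment of tiny $p$ is needed. Your claimed $x^{1-c}$ bound in that range is too optimistic near $p\approx\exp((\log x)^{\eps})$. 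The true bound there is $x\exp(-(\log x)^{1-\eps+o(1)})$, which is still negligible against $xz^{-2}$ for $\eps<1/2$, so nothing breaks.

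For (ii) you take a slightly different route. The paper uses the analysis of (i) to see that primes with $p=z^{\alpha+o(1)}$, $\alpha$ bounded away from $1$, contribute only $xz^{-2-\delta}$. It then needs the stability $\Psi(cx,y)\asymp\Psi(x,y)$ only in the dominant range $y=z^{1+o(1)}$. You instead apply $\Psi(2y,w)\ll\Psi(y,w)$ termwise to every $p\le\sqrt{x}$, and absorb the new primes in $(\sqrt{x},\sqrt{2x}]$ via the lower bound from (i). This avoids any argument that the window around $z$ dominates up to constants. The price is that you need the doubling bound uniformly in all $w\ge 2$, $y\ge 1$. That uniform bound does hold: it is trivial for $w>y$, and for $y\ge w\ge 2$ the Hildebrand--Tenenbaum estimate carries an error factor $1+O(1/u+(\log w)/w)$. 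That factor is bounded, though not $1+o(1)$ as you wrote; boundedness is all you use.
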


\begin{proof} For (i), see (the $r=0$ case of) \cite[(1.3), (1.7)]{ivic}, which in fact gives a more precise expansion
\begin{equation}\label{g0}
\#({\mathcal B}^1 \cap [1,x])  = \frac{x}{\exp\left( \left(\sqrt{2}+g_0(x) + O\left(\frac{\log_3^3 x}{\log_2^3 x}\right)\right) \log^{1/2} x \log^{1/2}_2 x\right)}
\end{equation}
where
$$ g_0(x) \coloneqq  \frac{\log_3 x - 2 -\log 2}{2 \log_2 x} \left( 1 + \frac{2}{\log_2 x} \right) - \frac{(\log_3 x - \log 2)^2}{8 \log_2^2 x};$$
see \Cref{fig-bad}. For the convenience of the reader also we give a more self-contained proof of part (i) in \Cref{notation-sec}, where we also establish (ii). See also \Cref{fig-bad}.
In \cite[p. 345]{erdos-graham} the exponent $\sqrt{2}$ was incorrectly reported as $1$ as an ``old result of one of the authors''; but in the later paper \cite[p. 73]{eg} by the same authors, this exponent $1$ was replaced with an unspecified constant $c$. See also \cite{ivic-2} for a discussion of some other related results with incorrectly reported constants.
\end{proof}

\begin{figure}
    \centering
\includegraphics[width=0.75\textwidth]{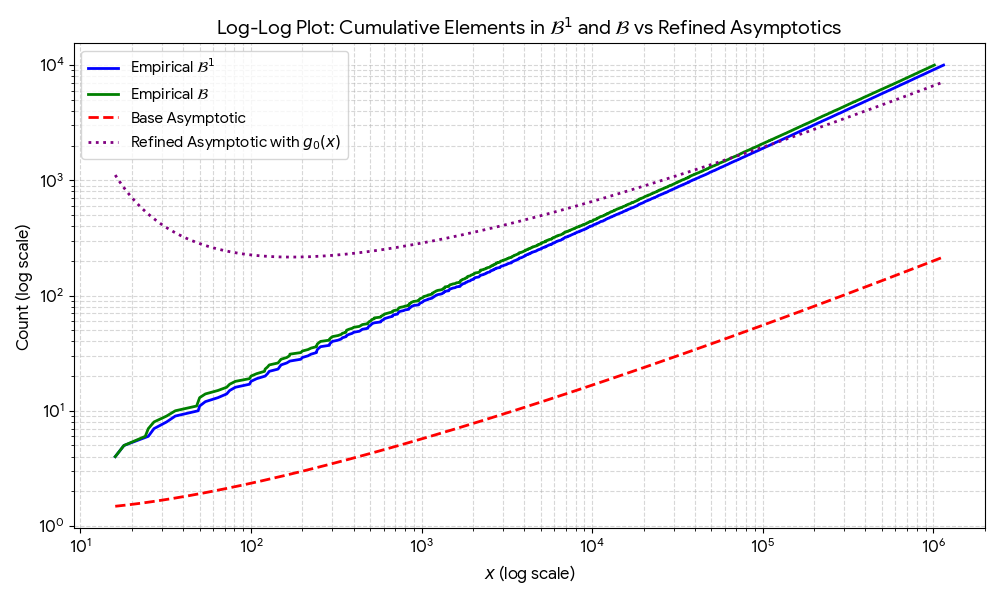}
    \caption{A log-log plot of $\#({\mathcal B}^1 \cap [1,x])$ and $\#({\mathcal B} \cap [1,x])$, against the base prediction in \Cref{b-asym}(i), as well as the more refined estimate in \eqref{g0}.  The discrepancy between the two approximations is quite large in this range due to the slow decay of $g_0(x) = O(\log_3 x / \log_2 x)$.  (Image generated by Gemini using data from the OEIS.)}
    \label{fig-bad}
\end{figure}

The first few elements of ${\mathcal B}$ are
\begin{equation}
1, 4, 8, 9, 16, 18, 24, 25, 27, 32, 36, 48, 49, 50, 54, 64, 72, \dots \tag{${\mathcal B}$}
\end{equation}
(\href{https://oeis.org/A388654}{OEIS A388654}).  As one can see from this (or \Cref{fig-cumulative}), this set is very similar to ${\mathcal B}^1$, but contains a few more elements; for instance, if $p$ is an odd prime, then $\{p^2-1,p^2\}$ can be checked to be bad, and hence $p^2-1$ will lie in ${\mathcal B}$, though it is unlikely to lie in ${\mathcal B}^1$.  The first few elements of ${\mathcal B}$ that do not lie in ${\mathcal B}^1$ are
\begin{equation}
 24, 48, 120, 168, 360, 528, 840, 960, 1155, 1368, 1680, 1683, \dots \tag{${\mathcal B} \backslash {\mathcal B}^1$}
\end{equation}
(\href{https://oeis.org/A387054}{OEIS A387054}).  For instance $1683 = 41^2+2 = 3^2 \times 11 \times 17$ lies in this sequence because it is not divisible by the square of its largest prime factor, but
$$1681 \times 1682 \times 1683 = 2 \times 3^2 \times 11 \times 17 \times 29^2 \times 41^2$$
is.

In \cite[Fact 4]{erdos-graham}\footnote{In \cite[p. 73]{eg} this was reported as $\#({\mathcal B} \cap [1,x]) > x^{1-\eps}$, but this appears to be a typographical error.} the asymptotic
\begin{equation}\label{fact4}
\#({\mathcal B} \cap [1,x]) = o(x)
\end{equation}
as $x \to \infty$ was established.  This was later improved in \cite[Theorem 2]{luca} to
\begin{equation}\label{b-bound}
\#({\mathcal B} \cap [1,x]) \ll \frac{x}{\exp((c+o(1)) \log^{1/4} x \log_2^{3/4} x)}
\end{equation}
for some constant $c>0$.

Our main result for this set is the following further improvement.

\begin{theorem}[Asymptotic for bad sets]\label{bad-thm}  One has
$$   \#(({\mathcal B} \backslash {\mathcal B}^1) \cap [1,x]) \ll \frac{\#({\mathcal B}^1 \cap [1,x])}{\log^{1-o(1)} x} $$
as $x \to \infty$.  In particular, \eqref{b-conj} holds, and
$$ \#({\mathcal B} \cap [1,x]) = \frac{x}{z^{2+o(1)}}.$$
\end{theorem}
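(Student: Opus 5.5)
Let $n \in {\mathcal B}\setminus{\mathcal B}^1$ with $n\le x$, lying in a bad interval $\{N+1,\dots,N+H\}$ whose product has largest prime factor $p$ with $p^2$ dividing the product. The plan is to classify such $n$ by how $p^2$ divides the product, and bound each class by $x/(z^{2+o(1)}\log^{1-o(1)} x)$. By \Cref{b-asym}(ii) this is the same as $\#({\mathcal B}^1\cap[1,x])/\log^{1-o(1)}x$.

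\textbf{Reduction.} If $H \ge p$, then some element of the interval is divisible by $p$. Since every element of the interval is $p$-smooth, we also expect elements near $p$-multiples that are divisible by larger primes. We use Sylvester--Schur (\Cref{classical}(i)): if $N>H$ the largest prime exceeds $H$, so $p>H$. The case $N\le H$ is trivial, since then the product contains $N+H\ge 2\cdot$(something) and has a prime in $(\frac{N+H}{2},N+H]$ appearing once by Bertrand, contradicting badness unless the interval is tiny. Hence $p>H$, so at most one element $m$ of the interval is divisible by $p$.

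If $p^2\mid m$, then $m$ is divisible by the square of its largest prime factor, so $m\in{\mathcal B}^1$. Moreover every element of the interval is $p$-smooth, so $n$ is $p$-smooth, and $n$ and $m$ are within distance $H<p$. Otherwise $p\| m$ for at least two distinct elements, which is impossible since $H<p$. So every bad interval has a unique $m\in{\mathcal B}^1$ with $p^2\mid m$, all other elements are $p$-smooth, and $|n-m|<p$. (We still need $n\ne m$ and $n\notin{\mathcal B}^1$.)

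\textbf{Counting.} Write $m=p^2k$ with $k$ being $p$-smooth, and $n=m+j$ with $0<|j|<p$ and $n$ being $p$-smooth. Heuristically, the dominant contribution to ${\mathcal B}^1\cap[1,x]$ comes from $p\approx z$, with $k\le x/p^2$ being $p$-smooth; the density of this is $\rho(u_0)\approx z^{-1}$. Each such $m$ gives at most $2p$ candidates $n$, and we require $n$ to be $p$-smooth with $n\notin {\mathcal B}^1$. So the total count is at most
\[\sum_p \sum_{k} \#\{ |j|<p : m+j \text{ is } p\text{-smooth}\}.\]
Naively this gives an extra factor $p\cdot\rho\approx z\cdot z^{-1}$ relative to ${\mathcal B}^1$, which is not a saving. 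We need an extra $\log x$ saving, and this is the main obstacle.

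\textbf{Main obstacle.} To gain, we exploit that $m+j$ and $m$ are jointly smooth. Interchange the roles: fix $n$ being $p$-smooth, and count $m\equiv 0 \pmod{p^2}$ within distance $p$ of $n$. There is at most one such $m$, and it exists only when $n \bmod p^2$ lies in a set of size about $2p$, i.e. with probability about $2/p$. So the count is
\[\sum_{p} \frac{1}{p}\,\#\{n\le x \text{ that are } p\text{-smooth}\}\approx \sum_p \frac{x\rho(\log x/\log p)}{p},\]
whereas $\#{\mathcal B}^1\approx\sum_p \frac{x}{p^2}\rho(\log(x/p^2)/\log p)$. The ratio of summands is $p\,\rho(u)/\rho(u-2)\approx p\,(u\log u)^{-2}$, which is not small. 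So the event must be made rarer: we additionally need $k=(n-j)/p^2$ to be $p$-smooth. The pair of simultaneous smoothness conditions on $n$ and $k$ should cost $\rho(u)\rho(u-2)$. Justifying this requires an equidistribution of smooth numbers in short intervals or in arithmetic progressions modulo $p^2$ (a Hildebrand-type short-interval smooth-number estimate for intervals of length $p$ around $p^2k$). With this, the count becomes $\sum_p p\cdot(x/p^2)\rho(u-2)\rho(u)$, which is smaller than $\#{\mathcal B}^1$ by a factor $p\rho(u)\approx z^{o(1)}$.

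The $\log^{1-o(1)}x$ saving must come from refining the comparison near $p\approx z$, using $u^u=z^{1+o(1)}$ from \eqref{upow}. I expect the bookkeeping here, and the justification of the joint smoothness via an upper-bound sieve (Selberg/large sieve over primes in $(p^{1-\epsilon},p]$ dividing $n$ or $k$), to be the crux. I would attempt the sieve bound first, accepting $\log^{1-o(1)}$ losses. The final asymptotic then follows from \Cref{b-asym}(i).
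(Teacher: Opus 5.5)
Your structural reduction is correct and matches \Cref{lem-basic}: for $H>1$ one has $p>H$, the interval contains exactly one element $p^2k\in\mathcal{B}^1$, and every element is $p$-smooth. The gap is that the proposal never produces a saving, and that saving is the whole content of the theorem.

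\textbf{No saving, even under your heuristic.} Suppose you grant that smoothness of $p^2k$ and $p^2k+j$ are independent. Your own accounting then gives a count of $\#(\mathcal{B}^1\cap[1,x])\cdot p\rho(u)$. For $p\approx z$ this factor is $z^{o(1)}$, which is not a gain. For larger $p$ (say $p\approx x^{0.15}$) it is a huge loss, because allowing every $|j|<p$ lets $H$ be as large as $p$. No bookkeeping near $p\approx z$ turns a factor $z^{o(1)}$ into $\log^{-1+o(1)}x$. You have discarded the information that must be used: every integer between $m$ and $n$ is smooth, and long prime-free intervals are rare.

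\textbf{The heuristic is not available.} No known equidistribution result controls $p$-smooth numbers in intervals of length $\le p=x^{o(1)}$. Results on correlations of multiplicative functions, and St{\o}rmer-type arguments, do not reach smoothness level $z^{1+o(1)}$.

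\textbf{An upper-bound sieve does not help for short intervals.} The only sievable information is that primes $q\in(p,2p)$ miss the $H$ residue classes occupied by the interval. Since $\sum_{p<q<2p}H/q\asymp H/\log p$, \Cref{large-sieve} gives nothing for bounded $H$. It only becomes effective once $H$ is a sizable power of $\log x$.

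\textbf{What the paper does instead.} It first discards long intervals. The case $H\ge x^{0.14}$ is handled by the Guth--Maynard almost-all result \Cref{prime-interval}(iii). The range $\log^{20}x\le H<x^{0.14}$ is handled by exactly the large-sieve argument above, which wins there. It then restricts to $p_0=z^{1+o(1)}$, no square divisors $d^2$ with $d\ge z^3$, and $m=p_1\cdots p_{1000}m'$ with $p_i=z^{1+o(1)}$.

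The decisive step treats $p_0,\dots,p_{1000}$ as independent random primes and runs an \emph{anti-sieve}:
\begin{itemize}
\item Smoothness of $p_0^2m+l$ forces $\sum_{p\le z^{1+o(1)}}1_{p\mid p_0^2m+l}\log p\gg\log^{2-o(1)}z$, whereas this sum has mean $\asymp\log z$.
\item Its moments are computed by expanding divisibility into Dirichlet characters. This reduces everything to the prime character sums $s_{P_i}(\chi)$.
\item The Siegel-type characters with large $|s_{P_i}(\chi)|$ are shown to be rare by Burgess plus Bombieri--Hal\'asz--Montgomery (\Cref{primesum}).
\end{itemize}
The final $\log^{1-o(1)}x=\log^{2-o(1)}z$ saving is exactly what Chebyshev's inequality yields for the primes $p>z^{1/100}$: mean and variance $O(H)$ against a threshold of $H\log^{1-o(1)}z$. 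A $50$th moment handles the small primes.
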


This affirms a conjecture in \cite[p. 73]{eg} (see also \cite[Problem 380]{bloom}). This is the most difficult result of this paper; we establish it in \Cref{main-sec}.

\subsection{Very bad intervals}

The first few elements of the set $\mathcal{VB}^1$ of powerful numbers are
\begin{equation}\label{vbad}
1, 4, 8, 9, 16, 25, 27, 32, 36, 49, 64, 72, 81, 100, \dots \tag{$\mathcal{VB}^1$}
\end{equation}
(\href{https://oeis.org/A001694}{OEIS A001694}).  The set $\mathcal{VB}^1$ is contained in ${\mathcal B}^1$ (and hence ${\mathcal B}$); however it is significantly smaller.  One can equivalently view a powerful number as the product $a^2 b^3$ of a perfect square $a^2$ and a perfect cube $b^3$ with $b$ squarefree, with every powerful number uniquely representable in this form; see, e.g., \cite[Lemma 2.3]{aktas}.  Using this representation, it was shown in \cite{golomb} that
\begin{equation}\label{count-power}
 \#(\mathcal{VB}^1 \cap [1,x]) \sim \frac{\zeta(3/2)}{\zeta(3)} \sqrt{x}
\end{equation}
where the \emph{Erd\H{o}s--Szekeres constant} $\frac{\zeta(3/2)}{\zeta(3)}$ can be computed explicitly as
$$\frac{\zeta(3/2)}{\zeta(3)}  = 2.1732543\dots.$$
(\href{https://oeis.org/A090699}{OEIS A090699}). In \cite[Theorem 3]{bateman-grosswald} the refined asymptotic
\begin{equation}\label{count-power-2}
 \#(\mathcal{VB}^1 \cap [1,x]) = \frac{\zeta(3/2)}{\zeta(3)} \sqrt{x} + \frac{\zeta(2/3)}{\zeta(2)} x^{1/3} + O(x^{1/6})
\end{equation}
was established, which is an exceptionally good fit in practice; see \Cref{fig-vb}.

\begin{figure}
    \centering
\includegraphics[width=0.75\textwidth]{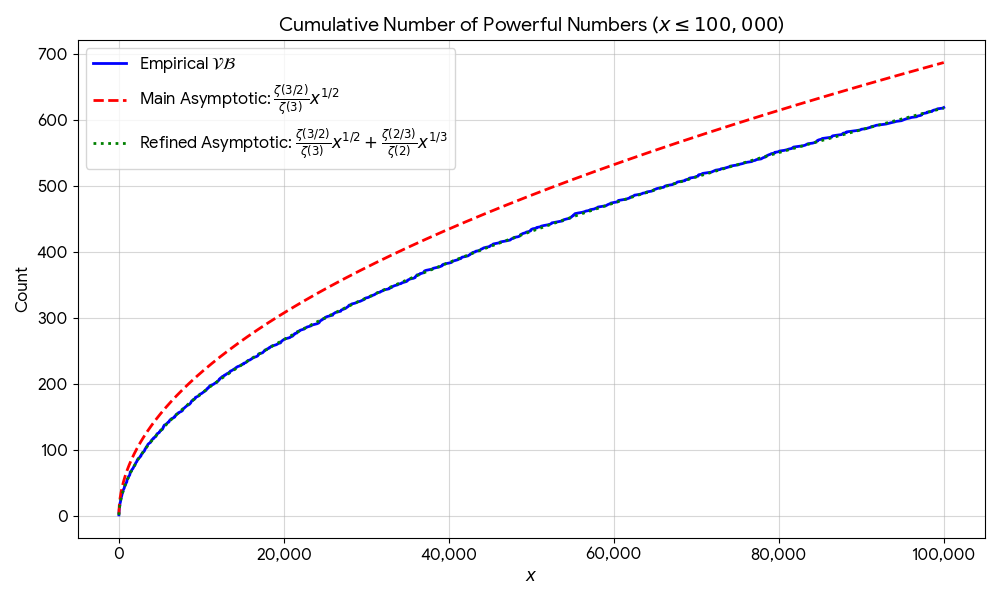}
    \caption{A plot of $\#(\mathcal{VB}^1 \cap [1,x])$ (which is numerically identical to $\#(\mathcal{VB} \cap [1,x])$) against the predictions in \eqref{count-power}, \eqref{count-power-2}, with the latter being an exceptionally good fit.  (Image generated by Gemini using data from the OEIS.)}
    \label{fig-vb}
\end{figure}

The set $\mathcal{VB}$ contains $\mathcal{VB}^1$, but numerically we know of no elements of $\mathcal{VB}$ that do not already lie in $\mathcal{VB}^1$; thus the first few elements of $\mathcal{VB}$ are also given by \eqref{vbad}.

There exist very bad intervals $\{N-1,N\}$ of length two, that is to say consecutive powerful numbers.  As observed by Mahler, any solution to the Pell equation $x^2-8y^2=1$ will give such a very bad pair $\{x^2-1,x^2\}$, although there are other examples, such as the example $\{12167, 12168\} = \{23^3, 2^3 \times 3^2 \times 13^2\}$ of Golomb \cite{golomb} and the examples $\{ 7^3 x^2-1, 7^3 x^2\}$ of Walker \cite{walker} arising from solutions to the Pell-type equation $7^3 x^2 - 3^3 y^2 = 1$.  See \cite[p.31]{erdos76}, \cite[p. 80]{eg}, \cite[Problem \#365]{bloom} for further discussion.  The first few $N$ for which $N-1$, $N$ are both powerful are
\begin{equation}
9, 289, 676, 9801, 12168, 235225, 332929, 465125, 1825201, 11309769, \dots \tag{$\mathcal{VB}^1 \cap (\mathcal{VB}^1+1)$}
\end{equation}
 (\href{https://oeis.org/A078326}{OEIS A078326}). In \cite[p. 31]{erdos-consecutive}, \cite[p. 234]{ribenboim}, \cite[p. 68]{eg}, \cite[Problem \#365]{bloom} it is conjectured that
\begin{equation}\label{erd-consec}
 \#(\mathcal{VB}^1 \cap (\mathcal{VB}^1+1) \cap [1,x]) \ll \log^{O(1)} x
\end{equation}
but at present the best known upper bound is
\begin{equation}\label{vbvb}
 \#(\mathcal{VB}^1 \cap (\mathcal{VB}^1+1) \cap [1,x]) \ll x^{2/5},
\end{equation}
with the bound improving to $x^{o(1)}$ assuming the $abc$ conjecture; see \cite{aktas}, \cite{chan}, as well as Remark \ref{rem-divisor} below. We also mention the work of Blomer \cite{blomer} lower bounding the size of the set $\mathcal{VB}^1 + \mathcal{VB}^1$ of numbers that are the sum of two powerful numbers.

 It was conjectured in Erd\H{o}s and Selfridge \cite[p. 73]{erdos-graham} (see also \cite[Problem \#137]{bloom}) that very bad intervals have length at most two, which (by the coprimality of consecutive integers) would confirm that $\mathcal{VB} = \mathcal{VB}^1$.  This conjecture remains open; it implies the weaker, but still open, conjecture of Erd\H{o}s \cite{erdos76} (see also \cite[Problem \#364]{bloom}) that there are no three consecutive powerful numbers.  We remark that the $abc$ conjecture implies that the number of triples of consecutive powerful numbers is finite: see, e.g., \cite[Exercise 1.3.6]{murty}.

Our main result for the set $\mathcal{VB}$ is the following variant of \eqref{vbvb}:

\begin{theorem}[Asymptotic for very bad sets]\label{verybad-thm}  One has
$$   \#((\mathcal{VB} \backslash \mathcal{VB}^1) \cap [1,x]) \ll x^{\frac{2}{5}+o(1)}$$
as $x \to \infty$.  In particular, since $\frac{2}{5} < \frac{1}{2}$, \eqref{vb-conj} holds, and
$$ \#(\mathcal{VB} \cap [1,x]) \sim \frac{\zeta(3/2)}{\zeta(3)} \sqrt{x}.$$
\end{theorem}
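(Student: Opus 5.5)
The plan is to show that every $n \in \mathcal{VB} \backslash \mathcal{VB}^1$ lies in a very bad interval $\{N+1,\dots,N+H\}$ with $H \geq 2$. Such an interval contains many consecutive pairs $m, m+1$, each of which is close to a pair of consecutive powerful numbers. We then count such pairs by adapting the argument that gives \eqref{vbvb}.

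\textbf{Structure of a very bad interval.} Let $\{N+1,\dots,N+H\} \subset [1,x]$ be very bad and contain a non-powerful $n$; then $H \geq 2$. A prime $p \geq H$ divides at most one element of the interval. Since the product is powerful, $p^2$ must then divide that element. Hence every element factors as $m = s_m q_m$, where $s_m$ is $H$-smooth and $q_m$ is powerful and coprime to $s_m$. We also control the smooth parts collectively. By \eqref{nhint}, the product equals $H!\binom{N+H}{H}$, and $v_p\binom{N+H}{H} \leq \log(N+H)/\log p$. This gives
$$ \prod_{m=N+1}^{N+H} s_m \leq H! \, x^{\pi(H)}. $$
So all but at most $H/4$ elements satisfy $s_m \leq S \coloneqq (H!\,x^{\pi(H)})^{4/H}$, which is $x^{O(1/\log H)} H^{O(1)}$. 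Consequently at least $H/2$ of the consecutive pairs $(m,m+1)$ in the interval have both $s_m, s_{m+1} \leq S$.

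\textbf{Counting pairs.} Let $\mathcal{P}_S(x)$ denote the set of $m \leq x$ with $m = s a^2 b^3$ and $m+1 = s' a'^2 b'^3$, where $s, s' \leq S$ ($b, b'$ squarefree). For fixed $(s,s')$ I would run the argument of \cite{aktas}, \cite{chan} behind \eqref{vbvb}. If $b$ or $b'$ is at least $x^{1/5}$, count directly, getting $\ll (x/s)^{1/2} x^{-1/10} + \dots$. Otherwise fix $b, b'$; this gives a Pell-type equation $s b^3 a^2 - s' b'^3 a'^2 = -1$ with $O(\log x)$ solutions up to $x$. The result should be $\#\mathcal{P}_S(x) \ll S^{O(1)} x^{2/5+o(1)}$. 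Next, every $n$ counted lies in an interval of length $H$ in which at least half the positions are left endpoints of pairs in $\mathcal{P}_S(x)$. A Vitali-type covering argument (take a maximal disjoint subfamily of such intervals, then triple them) then bounds the union of these intervals by $O(\#\mathcal{P}_S(x))$. This handles all $H \geq H_0(x)$ with $H_0 \to \infty$ slowly enough that $S^{O(1)} = x^{o(1)}$, for instance $\log H \gg \log_2 x$ so that $S = x^{o(1)}$.

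\textbf{Main obstacle: small $H$.} When $H$ is small, the pigeonhole bound on $s_m$ is useless, since $S$ is a power of $x$. Here I would bound the smooth parts directly. The primes involved are all $< H$, and the number of $H$-smooth $s$ is tiny when weighted by $s^{-2/5}$: we have $\sum_{s \text{ $H$-smooth}} s^{-2/5} \leq \prod_{p<H}(1-p^{-2/5})^{-1} = \exp(O(H^{3/5}))$. I would therefore rerun the pair count for a single consecutive pair $n, n\pm1$ in the interval, keeping track of the dependence on $(s,s')$. The target is a bound of the shape $\sum_{s,s'} (x/\max(s,s'))^{2/5+o(1)}$, and the Euler product makes this $x^{2/5+o(1)}$ as long as $H^{3/5} = o(\log x)$. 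Each $n$ is counted at most $O(H)$ times, which is harmless. The delicate point is making the $x^{2/5}$ Pell-counting argument genuinely uniform in $s, s'$. Of particular concern is the range where $s$ is large, so that $q$ is small and the count of powerful parts degenerates. For that range I would count directly, using that $s \cdot a^2 b^3 \leq x$ with $s$ smooth and large forces few choices.

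The intermediate range must also be checked. It runs from $H^{3/5} \asymp \log x$ up to $H_0$, and there the two regimes have to overlap. If they do not, I would interpolate by applying the pigeonhole bound only to primes $p \geq \log^2 x$ and the Euler-product bound to the smaller primes.
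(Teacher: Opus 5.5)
The missing ingredient is an upper bound on the length $H$ of a very bad interval, and without it your argument does not close. In your large-$H$ regime, the claim $S=(H!\,x^{\pi(H)})^{4/H}=x^{o(1)}$ needs more than $\log H\gg\log_2 x$. Since $(H!)^{4/H}\asymp H^4$, it holds only when $H=x^{o(1)}$. So the sentence ``this handles all $H\geq H_0(x)$'' silently assumes that very bad intervals are short.

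A priori you only know $H<p_0\leq\sqrt{2N}$: the largest prime factor $p_0$ exceeds $H$ by Sylvester--Schur and must divide some element twice. Alternatively, $H\ll N^{0.525}$ because the interval contains no prime. For $H\geq x^{\eps}$ your bound $S^{O(1)}x^{2/5+o(1)}$ is at best $x^{2/5+c\eps}$, and for $H$ near $\sqrt{x}$ it is trivial. Nothing else in your proposal treats these intervals. A single interval with $H>x^{2/5+\eps}$ would by itself violate the theorem, so you need a structural statement that long very bad intervals do not exist, not merely a sharper count.

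This is the role of Lemma~\ref{hvin} in the paper. Each prime $p\in(H,2H]$ divides at most one element, and if it does, it divides it twice. Hence $(\{N/p\},\{N/p^2\})$ can never land in a region such as $[0.9,1)\times[0,0.9)$. The Vinogradov-type equidistribution estimate (Theorem~\ref{vinogradov}) for $(N/p,N/p^2)$ over primes in $(H,2H]$, combined with $H\leq\sqrt{2N}$, then forces $H\leq\exp(\log^{2/3+o(1)}N)=x^{o(1)}$.

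Once this is available, the rest of your plan goes through; it otherwise parallels the paper: smooth part times powerful part, pigeonholing to two elements with small cofactors, then the Aktas--Chan $x^{2/5}$ count via Pell equations. It can also be streamlined. Instead of the full $H$-smooth part, take $a_m$ to be the product of the primes dividing $m$ exactly once. These $a_m$ are squarefree with all primes below $H$, and $\prod_m a_m\leq\prod_{p\leq H}p^{H/p+1}=H^{O(H)}$, with no $x^{\pi(H)}$ factor. Markov's inequality then gives two elements $an,\,bm$ with $a,b\ll H^{O(1)}=x^{o(1)}$ for every $H>1$. One then sums the uniform $x^{2/5+o(1)}$ bound for $an+h=bm$ over the $x^{o(1)}$ triples $(a,b,h)$. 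Your small-$H$ Euler-product analysis and the Vitali covering become unnecessary.

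Incidentally, once the $H$-bound is known, your two regimes do overlap. For fixed $(s,s')$, the three-bound interpolation gives $x^{2/5+o(1)}(ss')^{-1/5}$, not $(x/\max(s,s'))^{2/5}$. So the relevant product is $\prod_{p<H}(1-p^{-1/5})^{-2}=\exp(O(H^{4/5}))$, which is $x^{o(1)}$ for $H\leq\log x$. Meanwhile your pigeonhole gives $S=x^{o(1)}$ for $\log x\leq H\leq x^{o(1)}$.
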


This answers a conjecture from \cite[p. 345]{erdos-graham}, \cite[p. 73]{eg} (also mentioned in the commentary to \cite[Problem 380]{bloom}).  We prove this result in \Cref{verybad-sec}.  The arguments are similar to those used to prove \eqref{vbvb}, and any argument that improves that latter bound would likely be applicable to improve \Cref{verybad-thm} as well.

\subsection{Type \texorpdfstring{$F_3$}{F3} intervals}

The first few elements of the set ${\mathcal F}_3^1$ are
\begin{equation}
 4, 6, 8, 9, 16, 18, 20, 24, 25, 28, 30, 32, 35, 36, 45, 49 \tag{${\mathcal F}_3^1$}
\end{equation}
(\href{https://oeis.org/A387186}{OEIS A387186}).
It contains the square numbers $n^2$ larger than $1$, but for each natural number $a$, also contains all sufficiently large square multiples $s(a!)n^2$ of the squarefree part $s(a!)$ of $a!$.  The squarefree parts $s(a!)$ are
\begin{equation}
    1, 2, 6, 6, 30, 5, 35, 70, 70, 7, 77, 231, 3003, \dots \tag{$s(a!)$}
\end{equation}
(\href{https://oeis.org/A055204}{OEIS A055204}); from the prime number theorem one can verify that $s(a!)$ grows at least exponentially fast in $a$. A simple counting argument then establishes the asymptotic
\begin{equation}\label{f31}
   \#({\mathcal F}_3^1 \cap [1,x]) \sim c_3^1 \sqrt{x}
\end{equation}
where the constant $c_3^1$ is defined as
$$ c_3^1 \coloneqq \sum_{s} \frac{1}{s^{1/2}}$$
where $s$ ranges over the possible values $1, 2, 5, 6, 7, 30, \dots$ of $s(a!)$ (not counting multiplicity), and can be numerically computed as
$$ c_3^1 = 3.709751\dots$$
(\href{https://oeis.org/A389117}{OEIS A389117}).  Note from \Cref{classical}(ii) that the sequence $s(a!)$ only repeats at the square numbers, so we can also write
$$ c_3^1 = 1 + \sum_{a \geq 2: a \neq n^2 \forall n} \frac{1}{s(a!)^{1/2}}.$$

\begin{figure}
    \centering
\includegraphics[width=0.75\textwidth]{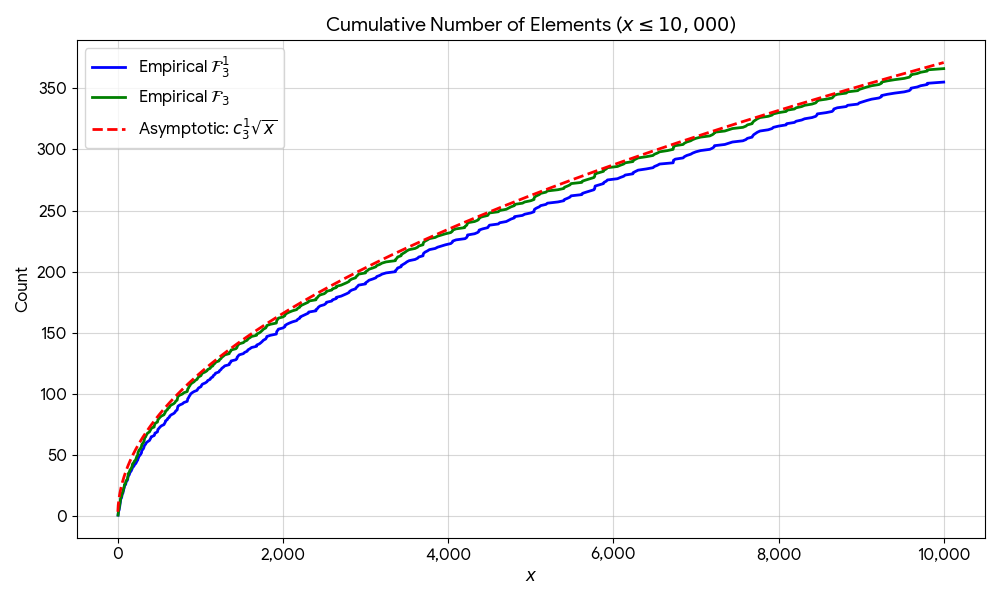}
    \caption{A plot of $\#({\mathcal F}_3^1 \cap [1,x])$ and $\#({\mathcal F}_3 \cap [1,x])$ against the prediction in \eqref{f31}.  (Image generated by Gemini using data from the OEIS.)}
    \label{fig-f3}
\end{figure}

The set ${\mathcal F}_3$ contains ${\mathcal F}_3^1$, but is slightly larger empirically, as one can see from \Cref{fig-cumulative}. The first few elements are
\begin{equation}
4, 6, 8, 9, 10, 16, 18, 20, 24, 25, 28, 30, 32, 35, 36, 45, 49, 50, \tag{${\mathcal F}_3$}
\end{equation}
(\href{https://oeis.org/A388851}{OEIS A388851}); see also \cite{dujella}.  As observed in \cite[(11)]{erdos-graham}, if $a! = uv$ is any factorization of a factorial and $x,y$ solve the Pell-type equation $ux^2-vy^2=1$ then
$$ a! (vy^2) (vy^2+1) = (a! xy)^2$$
is a perfect square, and so for $y$ sufficiently large $\{vy^2,vy^2+1\}$ is of type $F_3$, so that $vy^2+1$ will lie ${\mathcal F}_3$ if $y$ is large enough.  For instance, using the factorization $2! = 2 \times 1$ and the Pell equation solution $2 \times 5^2 - 1 \times 7^2 = 1$ we conclude that $1 \times 7^2 + 1 = 50$ lies in ${\mathcal F}_3$, even though it does not lie in ${\mathcal F}_3^1$.  Given the sparsity of solutions to Pell's equation, the total number of contributions to ${\mathcal F}_3$ arising from this construction would be expected to be small compared with ${\mathcal F}_3^1$, though still infinite.

The above constructions only give type $F_3$ intervals of length one or two.  In \cite{erdos-graham}, two sporadic type $F_3$ intervals of length three, namely $\{8,9,10\}$ and $\{48,49,50\}$, were noted,
\begin{align*}
s(8 \times 9 \times 10) &= 5 = s(6!) \\
s(48 \times 49 \times 50) &= 6 = s(3!) = s(4!).
\end{align*}
and they asked whether any further examples exist. In \cite{dujella} the additional examples
\begin{align*}
s(322 \times 323 \times 324) &= 104006 = s(26!)\\
s(350 \times 351 \times 352) &= 3003 = s(13!)\\
s(440 \times 441 \times 442) &= 12155 = s(18!)\\
s(2736 \times 2737 \times 2738) &= 104006 = s(26!)
\end{align*}
were located, and they also showed that this is the complete list of such intervals of length three with $a \leq 100$.  It was asked in \cite[p. 345]{erdos-graham} whether there were any type $F_3$ intervals of length greater than $3$; this remains open.

As a consequence of \cite[Theorem 2]{erdos-graham} it was shown that
$$ \# ({\mathcal F}_3 \cap [1,x]) = o(x)$$
as $x \to\infty$, and in \cite[Theorem 1]{luca} this was improved to
$$
\# ({\mathcal F}_3 \cap [1,x]) \ll \frac{x}{\exp((c+o(1)) \log^{1/4} x \log_2^{3/4} x)}
$$
for some fixed $c>0$.  In \Cref{f3-sec} we establish the following improvement:

\begin{theorem}[Near-asymptotic for ${\mathcal F}_3$]\label{f3-thm}  One has
$$   \#(({\mathcal F}_3 \backslash {\mathcal F}_3^1) \cap [1,x]) \ll x^{\frac{1}{2}+o(1)}$$
as $x \to \infty$.
\end{theorem}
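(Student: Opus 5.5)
The plan is to show that every element $n=N+H$ of ${\mathcal F}_3\backslash{\mathcal F}_3^1$ has the shape $n=uv^2$ with $u$ squarefree and $y$-smooth, where $y$ is a smoothness parameter satisfying $y=o(\log^2 x)$. I then count such numbers directly. First note that if $H=1$ then $s(n)=s(a!)$ with $a<N=n-1$, so $n\in{\mathcal F}_3^1$. Hence we may assume $H\ge 2$.

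\emph{Structure.} Let $s(a!)=s((N+1)\cdots(N+H))$ with $a<N$. Any prime $p>H$ divides at most one of the $N+i$. So if such a $p$ divides $n$ to an odd power, it divides the product to an odd power and therefore divides $s(a!)$, which forces $p\le a$. Hence $n=uv^2$ with $u$ squarefree and $\max(a,H)$-smooth.

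\emph{Bounding $a$.} Every prime $p\in(a/2,a]$ divides $a!$ exactly once, so it divides $s(a!)$. If also $p>H$, it divides exactly one element of the interval. Multiplying over these primes, using the prime number theorem, gives
$$e^{(1/2-o(1))a}\le \prod_{\max(a/2,H)<p\le a}p\le (N+1)\cdots(N+H)\le (2x)^H,$$
so $a\ll H\log x+H$. Thus $y:=\max(a,H)\ll H\log x$.

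\emph{Counting.} The number of $n\le x$ of the form $uv^2$ with $u$ squarefree and $y$-smooth is at most
$$\sum_{u\ y\text{-smooth, squarefree}}\sqrt{x/u}\le \sqrt{x}\prod_{p\le y}\bigl(1+p^{-1/2}\bigr)=\sqrt{x}\exp\Bigl(O\bigl(\tfrac{\sqrt y}{\log y}\bigr)\Bigr).$$
This is $x^{1/2+o(1)}$ provided $y=o(\log^2 x)$, say $H\le \log^{1-\delta}x$ up to the $\log y$ savings. Summing over the at most $x^{o(1)}$ admissible pairs $(H,a)$ costs only a further factor $x^{o(1)}$. This settles the regime of small $H$.

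\emph{The main obstacle is large $H$.} There the estimate above degenerates, and I would exploit the whole interval rather than its last element. Write $N+i=u_iv_i^2$ with $u_i$ squarefree. The same argument shows each $u_i$ is $y$-smooth, and that the primes above $H$ dividing $u_i$ lie in $(H,a]$ and are distinct across $i$.

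The $u_i$ are pairwise distinct. Indeed, $u_iv_i^2-u_iv_j^2=i-j$ would force $u_i|v_i^2-v_j^2|\le H$, hence $\sqrt{N}\ll H$. This is incompatible with the Sylvester--Schur type constraints of \Cref{classical}(i) unless $N$ is small, and small $N$ contributes $O(x^{1/2})$ trivially.

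Distinctness makes $\prod_i u_i$ large, of size at least about $H!$. On the other hand, the part of it coming from primes above $H$ is at most $\prod_{H<p\le a}p\le e^{(1+o(1))a}$, which yields a lower bound $a\gg H\log H$ (note that $\prod_{p\le H}p\le e^{(1+o(1))H}$). I would then play this off against the upper bound $a\ll H\log x$ from the second step. The aim is to force $H\ll \log x/\log_2 x$, or else reach a contradiction via Erdős--Selfridge (\Cref{classical}(ii)) style arguments on many consecutive near-squares. Making this squeeze effective is where I expect most of the work. If it only yields $H\le\log^{O(1)}x$, I would fall back on counting pairs of consecutive values $N+H-1,N+H$, both of the form $uv^2$ with $u$ $y$-smooth, aiming to recover the $\sqrt{x}$ exponent from one of them.
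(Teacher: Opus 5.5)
\textbf{There is a genuine gap: your proposal proves the theorem only for intervals with $H\le\log^{1-\delta}x$.} Your treatment of that range is correct, and it is a more elementary route than the paper's. Writing $n=uv^2$ with $u$ squarefree and $\max(a,H)$-smooth, and using $a\ll H\log x$, the bound $\sqrt{x}\prod_{p\le y}(1+p^{-1/2})$ disposes of every interval with $H\le\log^{1-\delta}x$, including the paper's case $H=O(1)$, with no Pell-equation input.

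The theorem needs all $H$, and for $H\ge\log^{1-\delta}x$ your squeeze fails at the multiplicity count. Distinctness gives $\prod_i u_i\ge H!=\exp(H\log H-O(H))$. But a prime $p\le H$ can divide up to $H/p+1$ of the $u_i$. So the small primes can contribute $\prod_{p\le H}p^{H/p+1}=\exp(H\log H+O(H))$ to $\prod_i u_i$, not $e^{(1+o(1))H}$. Hence no lower bound on $a$ follows. Even $H\log H\ll a\ll H\log x$ would not bound $H$.

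No bound anywhere near $H\ll\log x/\log_2 x$ is established in the paper, and your proposal supplies no mechanism for one. The paper only proves $H\le\exp(\log^{2/3+o(1)}x)$, using Vinogradov's equidistribution estimate (\Cref{vinogradov}, \Cref{hf3}); whether any $F_3$ interval of length $4$ exists is open. Your appeal to Sylvester--Schur to exclude $H\gg\sqrt N$ in the distinctness step is also unjustified: it only supplies a prime $>H$ dividing the product, and that prime may well be $\le a$.

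What is missing are the two counting ideas the paper uses once $H=x^{o(1)}$ is known.
\begin{itemize}
\item \emph{Case $a\le\eps H\log x$.} The multiplicity bound above is used as an \emph{upper} bound. Averaging $\log u_h$ over $h$ gives $h_1<h_2$ with $u_{h_1},u_{h_2}\le x^{O(\eps)+o(1)}$. Then $n$ lies within $H$ of a point on the hyperbola $u_{h_1}v^2+(h_2-h_1)=u_{h_2}w^2$. \Cref{squarecount} (Pell units plus the divisor bound in $\Q(\sqrt D)$) leaves only $x^{O(\eps)+o(1)}$ possibilities for $n$.
\item \emph{Critical regime $a\asymp H\log x$.} Each prime $p\in(a/2,a]$ divides $a!$ exactly once, so it must divide some $N+i$. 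This confines $N$ to $H$ of the $p\asymp H\log x$ residue classes modulo $p$. The large sieve (\Cref{large-sieve}), with $k$ maximal subject to $a^k\le\sqrt x$, then gives $x^{1/2+o(1)}$; this uses $a=x^{o(1)}$ and $k=o(\log x)$.
\end{itemize}
Your fallback, counting consecutive numbers of shape $uv^2$ and recovering $\sqrt x$ from one of them, substitutes for neither. Summing over the admissible $u$ reintroduces the factor $\exp(O(\sqrt y/\log y))$, which is exactly what blows up for large $y$.
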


This comes close to establishing the claim \eqref{f3-conj}, which was conjectured in \cite[p. 346]{erdos-graham}.  To complete the proof of this conjecture one needs to improve the $x^{1/2+o(1)}$ bound to $o(x^{1/2})$, but to do this via our methods requires breaking the infamous ``square root barrier'' in sieve theory, which we do not know how to do.  On the other hand, from \Cref{f3-thm} and \eqref{f31} we have
$$\#({\mathcal F}_3 \cap [1,x]) = x^{\frac{1}{2}+o(1)}$$
giving a weak answer to the $k=3$ case of \cite[Problem \#374]{bloom}.

As a corollary of our proof methods, we also obtain a bound for the number of solutions to \eqref{f3-eq}, which we also prove in \Cref{f3-sec}:

\begin{theorem}[Counting solutions to a factorial equation]\label{thm-f3-eq}  The number of solutions to \eqref{f3-eq} with $1 \leq a_1 < a_2 < a_3 \leq x$ is $x^{\frac{1}{2}+o(1)}$ as $x \to \infty$.
\end{theorem}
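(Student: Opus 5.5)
The plan is to prove the lower bound directly and to obtain the upper bound by splitting solutions according to the length $H = a_3 - a_2$ of the interval $\{a_2+1,\dots,a_3\}$. For the lower bound, fix $a_1=2$, $a_2=3$: then $2!3!a_3! = 12\,a_3!$ is a square precisely when $s(a_3!)=3$. A cleaner family is $a_2 = a_3-1$, for which \eqref{f3-eq} reduces to $a_1!\,(a_2!)^2\,a_3 = m^2$, i.e.\ $a_3$ has the same squarefree part as $a_1!$. Taking $a_1=1$ and $a_3=n^2\le x$ with $n \ge 2$ (so that $a_2 = n^2-1 > 1 = a_1$) already gives $\gg x^{1/2}$ solutions, which handles the lower bound.

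For the upper bound, we count triples in three ranges of $H = a_3-a_2$.

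\emph{Case $H=1$.} Here $a_3$ satisfies $s(a_3)=s(a_1!)$ with $a_1<a_3-1$. Since $s(a!)$ grows at least exponentially in $a$, only $a_1 \ll \log x$ are relevant. For each such $a_1$ there are at most $(x/s(a_1!))^{1/2}$ admissible values of $a_3$. Summing over $a_1$ gives $\ll x^{1/2}\log x$ solutions.

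\emph{Case $H=2$.} Now $s(a_2+1)\,s(a_2+2)$ must equal $s(a_1!)$ after squarefree reduction. Since $a_2+1$ and $a_2+2$ are coprime, this forces $a_2+1 = u y^2$ and $a_2+2 = v w^2$ with $uv = s(a_1!)$. So we need to count solutions of the Pell-type equations $v w^2 - u y^2 = 1$ with $y \le x^{1/2}$, over all factorizations $uv = s(a_1!)$. I would bound these trivially by $x^{1/2}$ choices of $y$ for each pair $(u,v)$. There are $x^{o(1)}$ pairs in total, since $a_1 \ll \log x$ once we restrict to $s(a_1!) \le x^{2}$. Should $a_1$ not be bounded this way, I would handle it via the largest prime factor argument below.

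\emph{Case $H\ge 3$, and large $a_1$ in general.} This is where I would rely on the machinery behind \Cref{f3-thm}, which bounds the number of distinct $a_3 \in {\mathcal F}_3\setminus{\mathcal F}_3^1$ by $x^{1/2+o(1)}$. It then suffices to show that each $a_3 \le x$ participates in at most $x^{o(1)}$ solutions, and this is the main obstacle. Fix $a_3$ and a solution, and let $p$ be a prime with $a_2 < p \le a_3$. Then $p$ divides $a_3!/a_2!$ exactly once, unless $p^2 \mid$ some element of the interval. Hence $p \le a_1$ cannot hold for such primes unless the interval is bad in the sense of \Cref{def-bad}. In either event $a_1$ is pinned down: it must be the unique value making the parity of $v_p(a_1!)$ correct for every prime $p$ near $a_2$. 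More concretely, $s(a_1!)$ determines $a_1$ up to the ambiguity at squares given by \Cref{classical}(ii). The remaining step is to show that, for fixed $a_3$, the number of admissible $a_2$ is $x^{o(1)}$. Since $s(a_3!/a_2!) = s(a_1!)$ with $a_1<a_2$, the largest prime in $(a_1,a_3]$ must appear to an even power in $(a_2,a_3]$. By Sylvester--Schur (\Cref{classical}(i)), this forces $H$ to be large relative to $a_2$, or else the interval to be bad. In the bad case I would invoke the estimates from \Cref{bad-thm}'s proof to limit $H \le \log^{O(1)} x$, giving at most $\log^{O(1)}x$ choices of $a_2$ per $a_3$. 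Combining the three cases yields $x^{1/2+o(1)}$ solutions.
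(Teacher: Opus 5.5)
Your lower bound (the triples $(1,n^2-1,n^2)$) and your treatment of $H=1$ and $H=2$ are fine. Reducing $H\ge 3$ to ``$x^{1/2+o(1)}$ choices of $a_3$, each lying in $x^{o(1)}$ solutions'' is also the right framing. To count $a_3$ you need \eqref{f31} as well as \Cref{f3-thm}, since $a_3$ may lie in ${\mathcal F}_3^1$.

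The gap is in bounding the number of $a_2$ for fixed $a_3$. Nothing in the proof of \Cref{bad-thm} gives a pointwise bound $H \le \log^{O(1)} x$ for bad intervals. That proof only bounds the \emph{number of integers covered} by bad intervals with $H \ge \log^{20} x$, by quantities like $x^{1-c+o(1)}$ and $x/z^{6+o(1)}$. This is a density statement far above the scale $x^{1/2}$. It says nothing about whether a bad interval ending at a given $a_3$ can be long. Indeed the paper notes (after \Cref{lem-basic}) that even $H \ll x^{o(1)}$ for bad intervals is an open conjecture, the best known bound being $H \ll x^{1/2-c}$.

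Your Sylvester--Schur step also does not give the dichotomy you state. The largest prime factor $p_0>H$ of the product either exceeds $a_1$, so the interval is bad, or is at most $a_1$, so the interval is $a_1$-smooth. Neither branch bounds $H$, and the non-bad branch is simply left open. All you get for free is that $(a_2,a_3]$ contains no prime, i.e.\ $H \ll a_2^{0.525}$.

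The missing idea is to use the full $F_3$ structure rather than badness. This is \Cref{hf3}, which the paper invokes at exactly this point.
\begin{itemize}
\item By \Cref{abound}, $a_1 \ll H\log a_2$. So with $P = H\log^2 a_2$, \emph{every} prime $p \in (P,2P]$ is coprime to $a_1!$ and must divide $(a_2+1)\cdots a_3$ to an even power.
\item Since $p > H$, each such $p$ divides either no element of the interval or exactly one element, at least twice.
\item This constrains the fractional parts of $a_2/p$ and $a_2/p^2$ simultaneously for a whole dyadic range of primes. That contradicts \Cref{vinogradov} unless $H \le \exp(\log^{2/3+o(1)} x) = x^{o(1)}$. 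When $P > \sqrt{2a_2}$ one instead uses that no element can be divisible by such $p$ at all.
\end{itemize}
Badness only constrains the single prime $p_0$, which is why it gives you nothing here. With $H = x^{o(1)}$ you get $x^{o(1)}$ choices of $a_2$ per $a_3$, and at most two choices of $a_1$ by \Cref{classical}(ii). The theorem follows, and your separate treatment of $H=1,2$ becomes unnecessary.
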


In view of \eqref{f3-conj} and \eqref{f31} it is natural to conjecture that the number of such solutions is in fact $\sim c_3^1 \sqrt{x}$; certainly the lower bound is true thanks to \eqref{f31}.

\begin{remark} In view of \cite{berczes}, one could try to adapt these results to study solutions to the equation $a_1! a_2! a_3! = n^k$ for fixed $k > 2$ and $a_1 < a_2 < a_3$, but our arguments use the specific choice $k=2$ at various junctures (for instance, in order to ignore the square denominator in \eqref{nhint}), so it is not immediately obvious to the author whether the methods extend to this setting.
\end{remark}

\subsection{Methods of proof: the very bad and \texorpdfstring{$F_3$}{F3} intervals}

We now discuss the methods of proof of \Cref{verybad-thm} and \Cref{f3-thm}.  We begin with \Cref{verybad-thm}.  If $\{N+1,\dots,N+H\}$ is very bad, and $p$ is a prime in the range $H < p \leq 2H$, then $p$ can only divide one of the terms in \eqref{nhint}, in which case it must also divide it twice by \Cref{def-bad}(ii).  This causes the fractional parts of $\frac{N}{p}$ and $\frac{N}{p^2}$ to be highly non-equidistributed for $p$ in this range, which when combined with some equidistribution results of Vinogradov (see \Cref{vinogradov}) can be used to show that $H$ is quite small, in particular $H = N^{o(1)}$.  The powerful nature of the product \eqref{nhint} can then be used to show that two elements in the interval $\{N+1,\dots,N+H\}$ are powerful up to small prime factors, thus creating a linear relation of the form $an+h = bm$ where $a,h,m$ are small (of polynomial size in $H$) and $n,m$ are powerful.  The main remaining task is then to upper bound the number of solutions to such equations.  Expressing powerful numbers as the product of a square and a cube, matters reduce to obtaining good upper bounds for the number of solutions to a hyperbola $an^2 + h = bm^2$ where $a,b,h$ are not too large.  This can be achieved through a version of the divisor bound for quadratic fields. The arguments here are similar to those in \cite{aktas}.

The proof of \Cref{f3-thm} is analogous - in particular one can also ensure that the length $H$ of the interval is small - but an additional parameter $a$ now appears, namely the quantity for which \eqref{nhint} shares its squarefree component with $a!$.  This forces all the primes between $a/2$ and $a$ to divide \eqref{nhint}, which leads to the bound $a \ll H \log N$.  If $a = o(H \log N)$ then it turns out that the previous arguments can be adapted to handle this case also, as the coefficients of the hyperbolae that appear are still quite small; and a similar method also treats the case when $H$ is bounded (the point now being that the coefficients of the hyperbola are now quite smooth).  But the previous method breaks down in the regime where $H$ is large and $a \asymp H \log N$.  Here we resort to a less efficient method, using the aforementioned fact that the primes $p$ between $a/2$ and $a$ divide \eqref{nhint} to impose some restrictive congruence restrictions on $N$ modulo each such prime $p$.  Another application of the large sieve then gives the $x^{1/2+o(1)}$ bound; unfortunately this is close to the limit of current sieve-theoretic methods, and some other technique would be needed to handle this case if one wished to obtain the full conjecture \eqref{f3-conj}.

\subsection{Methods of proof: the bad intervals}

We now discuss the methods of proof of \Cref{bad-thm}, which is the most difficult result to establish in this paper, and requires different techniques than those used to handle the very bad or $F_3$ intervals.  It turns out that bad intervals $\{N+1,\dots,N+H\}$ of a large length $H$ (e.g., $H \geq \log^{20} x$) can be shown to have a relatively small contribution by sieve methods, together with known facts about the scarcity of large prime gaps.  The main difficulty is with the short bad intervals.  A typical case arises when $H=2$ and $N+1 = p_0^2 m$ for some prime $p_0 = z^{1+o(1)}$, and some $p_0$-smooth number $m$ less than $x/p_0^2$ such that $p_0^2 m + 1$ is also $p_0$-smooth; then $p_0^2 m_0+1$ will be contained in the bad interval $\{p_0^2 m, p_0^2 m+1\}$, but will most likely not be divisible by the square of its largest prime factor.  The problem of controlling the number of examples $p_0^2 m + 1$ of this form is similar to the familiar problem of counting consecutive smooth numbers; however, the level $p_0 = z^{1+o(1)}$ of smoothness here is too large for tools such as St{\o}rmer's theorem \cite{stormer} to be applicable, while simultaneously being too small for results on binary correlations of multiplicative functions \cite{teravainen} to be usable.  Instead, we use an ``anti-sieve'', taking advantage of the fact that smooth numbers have an \emph{elevated} chance of being divisible by small primes (as opposed to the classical sieve-theoretic situation where one often wishes to count rough numbers not divisible by such a prime).

For sake of this informal discussion, let us assume that $p_0^2 m+1$ is squarefree and comparable to $x$.  Then from the fundamental theorem of arithmetic, if $p_0^2 m + 1$ is to be $p_0$-smooth, we must have
\begin{equation}\label{lpx}
 \sum_{p < p_0} 1_{p|p_0^2 m + 1} \log p = \log(p_0^2 m + 1) \asymp \log x.
\end{equation}
This can be compared with Mertens' theorem, which gives
$$ \sum_{p < p_0} \frac{1}{p} \log p \asymp \log z = \log^{1/2+o(1)} x.$$
If we can show that the events $1_{p|p_0^2 m + 1}$ are ``approximately independent'' as $m$ varies, one can then hope to show that the event
\eqref{lpx} is rare, by using moment methods.
To do this, we utilize the anatomy of $p_0$-smooth numbers to factor (most) $p_0$-smooth numbers $m$ as
$$ m = p_1 \dots p_{1000} m'$$
where the $p_1,\dots,p_{1000}$ are primes with
$$ z^{1-o(1)} \leq p_{1000} \leq \dots \leq p_1 \leq p_0 \leq z^{1+o(1)}$$
and $m'$ is $p_{1000}$-smooth.  It is convenient to view $m'$ as being fixed, and the primes $p_i$ for $i=0,\dots,1000$ as free parameters ranging over various dyadic intervals $[P_i,2P_i)$.  Applying the second\footnote{Actually, we only use the second moment method to deal with relatively large primes $p$, e.g., $p > z^{1/100}$.  For smaller $p$, we need to use higher moments to get satisfactory bounds.} moment method, the question then reduces to that of understanding the distribution of products $p_0^2 p_1 \dots p_{1000}$ with respect to moduli such as $p$ or $pp'$, which by standard Dirichlet character expansions then comes down to the task of upper bounding prime character sums
$$ \frac{|\sum_{P_i \leq p_i < 2P_i} \chi(p_i)|}{\sum_{P_i \leq p_i < 2P_i} 1},$$
where $\chi$ is a primitive Dirichlet character with a conductor such as $p$ or $pp'$, and the prime $\mathbf{p}_i$ is drawn uniformly at random amongst the primes in a dyadic interval $[P_i,2P_i)$ for some $P_i = z^{1+o(1)}$.

Here we run into the familiar issue that $\chi$ may be related to a Siegel zero that causes such prime character sums to be extremely large.  However, due to the repulsion phenomenon, we expect such exceptional characters to be rare.  We will therefore close the argument by quantifying the rarity of such exceptional characters.  As it turns out, it will be inconvenient to work directly with Siegel zeroes and Dirichlet $L$-functions at our choice of scales; instead, we shall use almost orthogonality methods (the Bomberi--Hal\'asz--Montgomery inequality), where the required almost orthogonality will be provided by the well-known Burgess bound on character sums, combined with a standard sieve to restrict to almost primes to avoid unwanted logarithmic factors in the estimates.

\subsection{Acknowledgments and AI tool disclosure}

The author was supported by the James and Carol Collins Chair, the Mathematical Analysis \& Application Research Fund, and by NSF grants DMS-2347850, and is particularly grateful to recent donors to the Research Fund.  We thank Tim Browning for some references, Dan Asimov and Antoine Deleforge for a correction, and Nat Sothanaphan for proofreading an initial version of this text (using GPT-5.4 Thinking).  An anonymous commenter on the author's blog provided additional references for \Cref{squarecount}. 

Gemini was used to generate the figures in the text, Github Copilot was used for text autocomplete, Claude Code was used for typesetting, and ChatGPT Pro was used for proofreading and for supplying the proof of \Cref{squarecount}.  Outside of these AI tool uses, the text of this paper was human generated.

\section{Notation and basic estimates}\label{notation-sec}

Throughout this paper we take $x$ to be a sufficiently large real number.  We use the following asymptotic notation:
\begin{itemize}
\item[(i)] $X \ll Y$, $Y \gg X$, or $X = O(Y)$ means there exists a constant $C>0$ such that $|X| \leq CY$.  If we need the implied constant $C$ to depend on a parameter, we will subscript the asymptotic notation accordingly; for instance, $X \ll_k Y$ denotes an estimate of the form $|X| \leq C_k Y$.
\item[(ii)] $X \asymp Y$ is shorthand for $X \ll Y \ll X$.
\item[(iii)] $o(1)$ denotes a quantity that goes to zero in some specified asymptotic regime, such as $x \to \infty$.
\item[(iv)] $X \sim Y$ is shorthand for $X = (1+o(1))Y$.
\end{itemize}

For the purposes of sums, products, and set builder notation, the symbol $p$ is always understood to range over primes.
If $E$ is a finite set, we use $\# E$ to denote its cardinality; if instead $E$ is a measurable subset of $\R$, we use $|E|$ to denote its Lebesgue measure.  In either case we use $1_E$ to denote the indicator function of $E$, thus $1_E(x)$ equals $1$ when $x \in E$ and zero otherwise.  If $x$ is a real number, we use $\lfloor x \rfloor$ and $\{x\}$ to denote the integer and fractional parts of $x$ respectively, and write $e(x) \coloneqq e^{2\pi i x}$.  If $a_1,\dots,a_n$ are natural numbers, we use $[a_1,\dots,a_n]$ to denote their least common multiple; and if $q$ is natural number, we use $\phi(q)$ to denote the Euler totient function of $q$.

Random variables and events will be denoted using boldface symbols.
We use the usual probabilistic notation $\P(\mathbf{E})$, $\E(\mathbf{X})$, $\Var(\mathbf{X}) = \E |\mathbf{X}|^2 - |\E \mathbf{X}|^2$, $\Cov(\mathbf{X},\mathbf{Y}) = \E \mathbf{X} \mathbf{Y} - (\E \mathbf{X}) (\E \mathbf{Y})$ to denote the probability of an event $\mathbf{E}$, the mean and variance of a real random variable $\mathbf{X}$, and the covariance of two real random variables $\mathbf{X}$, $\mathbf{Y}$ respectively.

\subsection{Smooth numbers}

Let $y \geq 1$.  We say that a natural number $n$ is \emph{$y$-smooth} if all its prime factors are at most $y$, and write $\Psi(x,y)$ to denote the number of smooth numbers up to $x$.  This quantity is well studied; see for instance \cite{granville} or \cite{tenenbaum}.  For our purposes, we will just need the following relatively crude estimates:

\begin{proposition}[Smooth number estimates]\label{smoot}\
    \begin{itemize}
        \item[(i)]  If $y = z^{\alpha+o(1)}$ for some $\alpha \asymp 1$, then $\Psi(x,y) = \frac{x}{z^{1/\alpha+o(1)}}$.  Furthermore, $\Psi(cx,y) \asymp \Psi(x,y)$ for any $c \asymp 1$ (where the implied constants here can depend on $c,\alpha$).
        \item[(ii)]  If $y = \log^{A+o(1)} x$ for some $1 < A \ll 1$, then $\Psi(x,y) = x^{1-1/A+o(1)}$.
    \end{itemize}
\end{proposition}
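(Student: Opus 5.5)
The plan is to derive both parts from the classical Canfield--Erd\H{o}s--Pomerance / Hildebrand estimate
\[ \Psi(X,y) = X u^{-u(1+o(1))}, \qquad u = \frac{\log X}{\log y}, \]
valid uniformly for $y \geq \log^{1+\eps} X$ and $u \to \infty$; see \cite{granville}, \cite{tenenbaum}. Equivalently, $\Psi(X,y) = X\rho(u)^{1+o(1)}$ in this range, and $\log \rho(u) = -(1+o(1)) u\log u$.

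For (i), write $y = z^{\alpha+o(1)}$. Since $z = x^{1/u_0}$, we get $u = \log x/\log y = (1/\alpha+o(1)) u_0$. Then $u\log u = (1/\alpha+o(1)) u_0 \log u_0$, because $\log u \sim \log u_0 \sim \tfrac12 \log_2 x$. By \eqref{upow}, $u_0\log u_0 = (1+o(1))\log z$, so
\[ u^{-u(1+o(1))} = z^{-1/\alpha+o(1)}, \]
which is the first claim.

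For the stability claim $\Psi(cx,y)\asymp\Psi(x,y)$, the crude $o(1)$ in the exponent is too weak. Instead I would invoke the standard local behaviour of $\Psi$: Hildebrand--Tenenbaum give $\Psi(cX,y) = c^{\beta(X,y)+o(1)}\Psi(X,y)$ uniformly for $c \asymp 1$, where $\beta \in [0,1]$ is the saddle point. A more self-contained route uses monotonicity for $c\geq 1$ together with the elementary inequality $\Psi(2X,y) \le 2\Psi(X,y)+O(\Psi(X,y)\cdot\text{small})$, or better, Hildebrand's result $\Psi(X/d,y) \gg d^{-1}\Psi(X,y)$ for bounded $d$. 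Either gives $\Psi(cx,y)\asymp_c\Psi(x,y)$, since the factor $c^{\beta}$ lies between $\min(1,c)$ and $\max(1,c)$.

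For (ii), take $y = \log^{A+o(1)} x$ with $A>1$. Then $u = \log x/((A+o(1))\log_2 x)$ and $\log u = (1+o(1))\log_2 x$. Hence $u\log u = (1/A+o(1))\log x$, and the estimate yields $\Psi(x,y) = x^{1-1/A+o(1)}$. The range condition $y \ge \log^{1+\eps}x$ holds because $A>1$ is fixed, and in this regime one can alternatively cite the de Bruijn estimate $\log\Psi(x,\log^A x) \sim (1-1/A)\log x$ directly.

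The main obstacle is the stability assertion in (i). It needs a genuinely local estimate rather than the $x^{o(1)}$-precision asymptotic, so it is the one place where a sharper theorem (Hildebrand--Tenenbaum saddle point, or Hildebrand's $\Psi(X/d,y)\gg \Psi(X,y)/d$) must be quoted. The remaining steps are bookkeeping with \eqref{upow} and the hierarchy of scales.
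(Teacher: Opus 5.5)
Your proposal is correct and follows essentially the paper's route. The paper's proof simply cites Granville's survey: the Canfield--Erd\H{o}s--Pomerance estimate $\Psi(x,y)=xu^{-u(1+o(1))}$ and a local-behaviour estimate for part (i), and de Bruijn's $\Psi(x,\log^A x)=x^{1-1/A+o(1)}$ for part (ii). Your bookkeeping with \eqref{upow} is exactly what converts these into the stated form.

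One caveat: drop the ``elementary inequality'' $\Psi(2X,y)\le 2\Psi(X,y)+\dots$, which as written is not a precise statement. The stability claim rests on the Hildebrand--Tenenbaum local estimate, or on $\Psi(X/d,y)\gg\Psi(X,y)/d$ combined with monotonicity, both of which you also cite.
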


\begin{proof}  For part (i), see \cite[(1.15), (3.24)]{granville}; for part (ii), see \cite[(1.14)]{granville}. We refer the reader to \cite{tenenbaum} for more sophisticated estimates of this type.
\end{proof}

Observe from \Cref{def-sets} that a number $n$ lies in ${\mathcal B}^1$ if and only if it is of the form $n = p^2 m$ for some prime $p$ and $p$-smooth $m$, with each such $n$ having exactly one representation of this form.  This gives the exact formula
\begin{equation}\label{box}
\#({\mathcal B}^1 \cap [1,x])  = \sum_{p \leq \sqrt{x}} \Psi\left(\frac{x}{p^2}, p\right).
\end{equation}

As a first application of these estimates, we can now quickly prove \Cref{b-asym}.

\begin{proof}[Proof of \Cref{b-asym}] By Mertens' theorem we have
$$\sum_{p \leq \sqrt{x}} \frac{1}{p} \asymp \log_2 x = z^{o(1)}$$
and also
$$\sum_{p = z^{1+o(1)}} \frac{1}{p} = z^{o(1)}.$$
To prove (i), it will suffice by \eqref{box} to show that
\begin{equation}\label{e1}
 \Psi\left(\frac{x}{p^2}, p\right) \ll \frac{1}{z^{2+o(1)}} \frac{x}{p}
\end{equation}
for all $p \leq \sqrt{x}$, with the matching lower bound
\begin{equation}\label{e2}
 \Psi\left(\frac{x}{p^2}, p\right) \gg \frac{1}{z^{2+o(1)}} \frac{x}{p}
\end{equation}
for all $p$ in the range
\begin{equation}\label{p-range}
p = z^{1+o(1)}.
\end{equation}
In the range $p \geq z^2$ the claim follows from the trivial bound
$$ \Psi\left(\frac{x}{p^2}, p\right) \leq \frac{x}{p^2} \leq \frac{1}{z^2} \frac{x}{p}$$
while for $p \leq z^{1/2}$ the claim follows from \Cref{smoot}(i) since
$$ \Psi\left(\frac{x}{p^2}, p\right) \leq \Psi\left(\frac{x}{p^2}, z^{1/2}\right) \leq \frac{x/p^2}{z^{2+o(1)}}.$$
In the remaining cases, we have $p = z^{\alpha+o(1)}$ for some $1/2 \leq \alpha \leq 2$, and then by \Cref{smoot}(i)
$$ \Psi\left(\frac{x}{p^2}, p\right) \leq \frac{x/p^2}{z^{1/\alpha+o(1)}} = \frac{1}{z^{\alpha+\frac{1}{\alpha}+o(1)}} \frac{x}{p}$$
and \eqref{e1}, \eqref{e2} then follow in this case from the arithmetic mean-geometric mean inequality.  This proves (i). The above analysis, when combined with the second part of \Cref{smoot}(i), also gives the claim (ii).
\end{proof}

\begin{remark}
Note that this analysis also suggests that the primes $p$ in the range \eqref{p-range} should play a dominant role in the analysis of bad intervals.  Sharper estimates on the quantity $\# ({\mathcal B}^1 \cap [1,x])$ may be found in \cite{ivic} (which uses $T_0(x)$ to denote $\# ({\mathcal B}^1 \cap [1,x])$).
\end{remark}

\subsection{Primes in short intervals}

Now we collect some standard results about primes in short intervals.

\begin{proposition}[Primes in short intervals]\label{prime-interval}\
    \begin{itemize}
        \item[(i)]  (Bertrand's postulate) If $N \geq 2$, then there exists a prime $p$ in the range $N/2 < p \leq N$.
        \item[(ii)]  (Baker--Harman--Pintz) If $N \geq 0$, $H \geq 1$ and $\{N+1,\dots,N+H\}$ does not contain any prime, then $H \ll N^{0.525}$.
        \item[(iii)] (Application of Guth--Maynard) If $\theta > \frac{2}{15}$, then there exists $c>0$ such that as $x \to \infty$, the set of real numbers $0 \leq x' \leq x$ for which $[x', x'+x^\theta]$ does not contain a prime has measure $O(x^{1-c+o(1)})$.
    \end{itemize}
\end{proposition}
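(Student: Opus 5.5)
The three statements in \Cref{prime-interval} are essentially citations, so my plan is to reduce each one to a standard result. Only part (iii) needs a short derivation.

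\emph{Parts (i) and (ii).} Part (i) is Bertrand's postulate (Chebyshev's theorem). For real $N \geq 2$ there is a prime in $(N/2, N]$: apply the integer version to $\lfloor N \rfloor$, and check small $N$ by hand. Part (ii) is the Baker--Harman--Pintz theorem, which says that $[y - y^{0.525}, y]$ contains a prime for all sufficiently large $y$. If $\{N+1,\dots,N+H\}$ contains no prime and $H \geq C N^{0.525}$, take $y = N+H$. Then $y^{0.525} \leq (N+H)^{0.525} \ll H$, since either $H \geq N$ or $N^{0.525} \ll H$. With a suitable constant this places a prime inside the interval, a contradiction. Small $N$ are absorbed into the implied constant.

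\emph{Part (iii).} This is the only part needing an argument. I would use the Guth--Maynard zero-density estimate $N(\sigma,T) \ll T^{30(1-\sigma)/13+o(1)}$, feeding it into the standard short-interval variance argument. The argument I have in mind is Huxley's argument as in Harman's book: bound the $L^2$ norm of $\psi(x'+h)-\psi(x')-h$ over $x' \in [X,2X]$ via the explicit formula, using a zero-density bound of the form $N(\sigma,T) \ll T^{A(1-\sigma)+o(1)}$. The outcome is
\[
\int_X^{2X} |\psi(x'+h)-\psi(x')-h|^2\,dx' \ll X h^2 \cdot X^{-c}
\]
for $h = X^{\theta}$, provided $\theta > 1 - 2/A$. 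With $A = 30/13$ this gives $1-2/A = 1 - 13/15 = 2/15$, which is exactly the threshold in the statement.

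Chebyshev's inequality then shows that the set of $x' \in [X,2X]$ with $\psi(x'+h)-\psi(x') \leq h/2$ has measure $O(X^{1-c+o(1)})$. Contributions to $\psi$ from prime powers are negligible, and any such $x'$ has $[x',x'+h]$ containing a prime. Finally, I would sum over dyadic $X \leq x$, using $h = x^\theta \geq X^\theta$ so that longer intervals only help. The range $x' \leq x^{1-c}$ is trivial.

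The main obstacle is checking that the zero-density exponent really gives the $2/15$ threshold, and handling zeros near $\sigma=1$. For the latter I would use the classical zero-free region together with Huxley-type density bounds for $\sigma$ near $1$. Alternatively, one can simply cite Guth--Maynard's stated corollary that almost all intervals $[x',x'+x'^{2/15+\eps}]$ contain primes, with a power-saving exceptional set; I expect the paper does exactly this.
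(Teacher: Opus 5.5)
Your treatment of (i) and (ii) is fine and matches the paper, which simply cites Bertrand's postulate and Baker--Harman--Pintz.

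The gap is in (iii). You claim that Huxley's argument, fed with $N(\sigma,T)\ll T^{A(1-\sigma)+o(1)}$, gives $\int_X^{2X}|\psi(x'+h)-\psi(x')-h|^2\,dx'\ll Xh^2X^{-c}$. It does not, and such a bound is out of reach.

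In that argument, zeros with real part near $\sigma$ and $|\gamma|\le T=X^{1-\theta+o(1)}$ contribute about $X^{2\sigma-2}N(\sigma,T)\le X^{-(1-\sigma)(2-A(1-\theta))+o(1)}$ to the normalized mean square. This is a power saving only when $1-\sigma$ is bounded below.

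For zeros close to the $1$-line, the only tool is the Vinogradov--Korobov zero-free region, which saves just $\exp(-c\log^{1/3-o(1)}X)$. A power saving for the \emph{full} mean square would essentially amount to a zero-free strip $\mathrm{Re}\,s>1-c/2$. So adding ``the classical zero-free region together with Huxley-type density bounds'' inside the mean square cannot produce what you need. Chebyshev applied to the whole error term only gives an exceptional set of measure $X\exp(-c\log^{1/3-o(1)}X)$, not $X^{1-c}$.

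The missing idea is to take the zeros with $\beta>1-\eta$ (for a small fixed $\eta>0$) out of the mean square and bound them \emph{pointwise}, uniformly for $x'\in[X,2X]$.

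Since $|\int_{x'}^{x'+h}t^{\rho-1}\,dt|\le hX^{\beta-1}$, their contribution is at most $h\sum_{\beta>1-\eta,\,|\gamma|\le T}X^{\beta-1}$. This is $O(h\exp(-c\log^{1/3-o(1)}X))$ once you combine the zero-free region with a density estimate whose exponent tends to zero as $\sigma\to1$, such as $N(\sigma,T)\ll T^{C(1-\sigma)^{3/2}}\log^{B}T$. A linear exponent like Huxley's $12/5$ or Guth--Maynard's $30/13$ is not enough here, since the pointwise sum would require $A(1-\theta)<1$.

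Only the zeros with $\beta\le 1-\eta$ go into the $L^2$ argument. There the Guth--Maynard estimate gives the genuine power saving $X^{-\eta(2-A(1-\theta))+o(1)}$, which is a saving exactly when $\theta>2/15$.

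Chebyshev applied to this part, together with your prime-power and dyadic bookkeeping, then yields (iii). This is the extra input the paper imports by citing the arguments of Gafni--Tao or Bazzanella (Theorem 2(i)) on top of the Guth--Maynard zero-density estimate. It does not rely on a power-saving corollary stated by Guth--Maynard alone.
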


\begin{proof} Part (i) is classical.  For (ii), see \cite{bhp}.  For part (iii), we apply the recent zero-density estimate of \cite{guth-maynard} (when combined with the arguments in\footnote{Much stronger results are known assuming the Riemann hypothesis: see \cite{hb}.} \cite{gafni-tao} or \cite[Theorem 2(i)]{bazzanella}).
\end{proof}

\begin{remark}\label{rem-bounds}  The precise upper bound of $N^{0.525}$ in \Cref{prime-interval}(ii) is not critical for our purposes; indeed even the far weaker bound of $o(N / \log N)$, obtainable from the prime number theorem with classical error term, would have already sufficed.  However, it will be important in the proof of \Cref{bad-thm} that \Cref{prime-interval}(iii) is available for some $\theta < 1/6$, which only became possible with the recent work of Guth and Maynard.  There are results similar to \Cref{prime-interval}(iii) that are available for some $\theta < 1/6$ if one is willing to weaken the bound on the exceptional set; for instance, the results\footnote{We thank an anonymous commenter on the author's blog for this remark.} in \cite{harman} work for any $\theta > 1/10$ but with only a bound of $O(x/\log x)$ on the exceptional set, which unfortunately is not sufficient for our purposes.
\end{remark}

\subsection{Equidistribution estimates for primes}

Vinogradov famously gave non-trivial bounds for exponential sums such as $\sum_{P < p \leq 2P} e(T f(n/P))$ for various smooth phase functions $f$ and relatively large values of frequency $T$ (e.g., $T$ as large as $\exp(\log^{3/2-\eps} P)$).  This gives rather strong control on the equidistribution of the fractional parts of $T f(n/P)$.  We require the following version of this equidistribution estimate.

\begin{theorem}[Vinogradov-type equidistribution estimate]\label{vinogradov}  Let $\eps>0$, $P \geq 2$, $A>0$, and $M, N = O(\exp(\log^{3/2-\eps} P))$.  Let $W \colon \R^2 \to \R$ be a smooth, $\Z^2$-periodic function.  Then for any interval $I$ in $[P,2P]$ and $j \geq 1$, one has
$$ \sum_{p \in I} W\left(\frac{N}{p}, \frac{M}{p^j}\right) = \int_I W\left(\frac{N}{t}, \frac{M}{t^j}\right) \frac{dt}{\log t} + O_{\eps,A}\left(\frac{\|W\|_{C^3} P}{\log^A P}\right)$$
where $\|W\|_{C^3} \coloneqq \sup_{i=0}^3 \sup_{x \in \R^2} |\nabla^i W(x)|$.
\end{theorem}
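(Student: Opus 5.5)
The plan is to expand $W$ in a two-dimensional Fourier series and reduce to Vinogradov's estimate for exponential sums over primes. Write
$$W(x,y) = \sum_{(k,l)\in\Z^2} \hat W(k,l)\, e(kx+ly),$$
where integration by parts three times gives
$$|\hat W(k,l)| \ll \|W\|_{C^3} (1+|k|+|l|)^{-3}.$$
This decay is summable in $\Z^2$, since $\sum_{(k,l)} (1+|k|+|l|)^{-3}$ converges.

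\textbf{Step 1: the frequency cutoff.} Fix a cutoff $K = \log^{A} P$. The tail frequencies with $|k|+|l|>K$ contribute $O(\|W\|_{C^3} P/K)$ to both the prime sum and the integral. This uses the trivial bounds $\#\{p\in I\} \ll P$ and $\int_I dt/\log t \ll P$, together with
$$\sum_{|k|+|l|>K} (1+|k|+|l|)^{-3} \ll K^{-1}.$$

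\textbf{Step 2: the zero frequency.} The frequency $(0,0)$ contributes
$$\hat W(0,0)\Big(\pi(I) - \int_I \frac{dt}{\log t}\Big).$$
By the prime number theorem with classical error term this is $O(\|W\|_{C^3} P\log^{-A}P)$.

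\textbf{Step 3: the nonzero frequencies.} For each of the $O(K^2)$ remaining frequencies it suffices to show
$$\sum_{p\in I} e(f(p)) = \int_I e(f(t))\frac{dt}{\log t} + O\big(P\log^{-3A}P\big), \qquad f(t) = \frac{kN}{t} + \frac{lM}{t^j},$$
since $\sum_{(k,l)}|\hat W(k,l)|\ll \|W\|_{C^3}$. It is cleanest to prove that both sides are separately $O(P\log^{-3A}P)$, treated in two regimes.

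\emph{Small amplitude.} Suppose $T = |k|N/P + |l|M/P^j \le \log^{C}P$, so that $f$ varies slowly. Then partial summation from the prime number theorem in short intervals (Siegel–Walfisz-strength error $P\exp(-c\sqrt{\log P})$) shows the prime sum equals the integral up to $O(T\,P\exp(-c\sqrt{\log P}))$. This is acceptable.

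\emph{Large amplitude.} Suppose instead $T > \log^C P$. The integral is then small by van der Corput / second-derivative or first-derivative bounds. Here $f'(t)$ is essentially of size $T/P$ times a non-degenerate combination; any cancellation between the two terms occurs only near at most $O(j)$ points, and those can be handled by the second derivative test. For the prime sum, use Vinogradov's method in the Korobov–Vinogradov form. For $f$ with derivatives
$$f^{(r)}(t) \asymp_r T\,P^{-r}\quad \text{for } r \text{ up to } \asymp \log P,$$
and $\log T \ll \log^{3/2-\eps}P$, one has
$$\sum_{p\le X} e(f(p)) \ll P\exp\big(-c\,\log^3 P/\log^2 T\big) + P\exp(-c\sqrt{\log P}) + P\,T^{-c}.$$
This is obtained via Vaughan's identity reducing to type I/II sums, with Vinogradov's mean value theorem (now available in optimal form) bounding the resulting Weyl sums. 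The condition $T\le \exp(\log^{3/2-\eps}P)$ makes $\log^3P/\log^2T \ge \log^{2\eps}P$, which beats any power of $\log P$. This is exactly where the hypothesis on $M,N$ enters.

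\textbf{Main obstacle.} The hard part is Step 3: verifying the derivative hypotheses of the Vinogradov–Korobov bound for the two-term phase. The phase $kN/t + lM/t^j$ could have a near-cancelling $r$-th derivative for some $r$. For each fixed $r$, the factor
$$(-1)^r r!\,kN\,t^{-r-1} + c_{r,j}\,lM\,t^{-j-r}$$
vanishes at most at one point $t$ (a ratio of monomials). So I would split $I$ into $O(\log^{O(1)}P)$ subintervals, on each of which one term dominates uniformly for all the relevant $r$. Alternatively, I would rescale $t=P u$ and invoke a Vinogradov estimate for general analytic phases $F(u)$ with $|F^{(r)}|$ controlled from above and a lower bound on a single derivative. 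I expect the bookkeeping of this splitting, keeping all losses to powers of $\log P$, to be the most delicate point. Rather than reproving it, I would cite a standard reference (e.g. Karatsuba's or Vinogradov's monograph) for the analytic-phase prime exponential sum estimate.
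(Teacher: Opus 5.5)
The paper gives no argument of its own here: it cites \cite[Proposition 1.12]{singmaster}. Your reduction is the natural route: an absolutely convergent Fourier expansion with $|\hat W(k,l)|\ll\|W\|_{C^3}(1+|k|+|l|)^{-3}$, truncation at $K=\log^A P$, and the prime number theorem for the zero frequency. Steps 1 and 2 are fine.

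Step 3, however, contains a concrete error. Your amplitude $T=|k|N/P+|l|M/P^j$ is not the size of the phase when $j=1$. In that case $f(t)=(kN+lM)/t$, whose amplitude is $|kN+lM|/P$.

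Take $j=1$, $M=N$, $(k,l)=(1,-1)$. Then $f\equiv 0$, yet your $T=2N/P$ may be enormous, so this frequency falls in your ``large amplitude'' case. There you claim both $\sum_{p\in I}e(f(p))$ and $\int_I e(f(t))\,dt/\log t$ are $O(P\log^{-3A}P)$. For $I=[P,2P]$ both are actually $\asymp P/\log P$. Likewise, your claim that each derivative factor vanishes at most at one point fails for $j=1$: there it vanishes identically or never. Such degenerate frequencies are exactly why the main term is $\int_I W(N/t,M/t^j)\,dt/\log t$ rather than $\hat W(0,0)\int_I dt/\log t$.

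The repair is easy. For $j=1$, use $|kN+lM|/P$ as the amplitude. Then $kN+lM=0$ is treated like the zero frequency, and small nonzero values go to your partial-summation regime, which in fact works for all amplitudes up to $\exp(c\sqrt{\log P})$. For $j\ge2$ there is no collapse, since $t^{-1}$ and $t^{-j}$ are independent; one checks that $\max(|f'(P)|,|f'(2P)|)\gg T/P$ uniformly in $j$.

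The second problem is at the point you flag yourself. You propose splitting $I$ into subintervals on which one of the two terms dominates $f^{(r)}$ simultaneously for all $r$ up to $\asymp\log P$. Such a splitting does not exist in general. For $j=2$ and $kl<0$,
$f^{(r)}(t)=(-1)^r r!\,t^{-r-2}\,(kNt+(r+1)lM)$, which vanishes at $t_r=(r+1)|l|M/(|k|N)$. These zeros are equally spaced. So if $|k|NP/(|l|M)\approx R/2$, where $R$ is the number of derivative orders you use, every $t\in[P,2P]$ lies within $O(P/R)$ of some $t_r$ with $r\le R$. At that point $|f^{(r)}(t)|$ is smaller than each of its two terms by a factor $\gg R$.

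Hence the hypothesis in your displayed bound is not available for the two-term phase, and you are forced to your alternative. That is a Vinogradov--Korobov bound over primes (a combinatorial identity, Type II sums with difference phases, Vinogradov's mean value theorem). It needs upper bounds on all derivatives but a lower bound on only one order, with the neighbourhood of that order's unique zero handled by an adjacent order.

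That estimate is essentially the whole content of the theorem. A generic pointer to Karatsuba or Vinogradov does not identify a result whose hypotheses $kN/t+lM/t^j$ is known to satisfy, so at this step your proposal is no more self-contained than the paper's citation. Uniformity in $j$, which the statement asserts, is also left unaddressed, though it is easy: once $j\log P\ge 2\log^{3/2-\eps}P$, the coordinate $M/p^j$ is negligible and one reduces to a single variable.
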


\begin{proof} See \cite[Proposition 1.12]{singmaster}.  Related estimates can also be found in \cite{granville-ramare}.
\end{proof}

\begin{remark}
    In our applications we will only need the case when $M=N$ and $j=2$.
\end{remark}

\subsection{The large sieve}

A standard form of a large sieve estimate involves bounding the size of an interval $I$ after one removes a certain number $\omega(p)$ of residue classes modulo $p$ for various primes $p$; see for instance \cite[Corollary 1]{montgomery-vaughan}. For our applications we shall need to generalize this slightly to the case where the moduli need not be prime, but remain pairwise coprime.  We begin with a generalization of an uncertainty principle of Montgomery \cite{montgomery-vaughan}, \cite{selberg}:

\begin{lemma}[Montgomery uncertainty principle]\label{montgomery} Let ${\mathcal Q}$ be a finite collection of pairwise coprime natural numbers $q$.  Let $f \colon \N \to \C$ be a finitely supported function, such that for each $q \in {\mathcal Q}$, there are $\omega(q)$ residue classes modulo $q$ on which $f$ vanishes.  Then for any distinct $q_1,\dots,q_k \in {\mathcal Q}$, one has
$$ \sum_{a \in \Z/q_1 \dots q_k\Z: q_1,\dots,q_k \nmid a} \left|\sum_n f(n) e\left(-\frac{an}{q_1 \dots q_k}\right)\right|^2 \geq \left(\prod_{j=1}^k \frac{\omega(q_j)}{q_j-\omega(q_j)}\right) |\sum_n f(n)|^2,$$
where $e(\theta) \coloneqq e^{2\pi i \theta}$.
\end{lemma}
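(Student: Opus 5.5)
The approach is the standard Montgomery argument: first handle a single modulus, then pass to products via the Chinese remainder theorem. Write $S(\alpha) := \sum_n f(n) e(-n\alpha)$. For one modulus $q \in {\mathcal Q}$, we want
$$\sum_{a \in \Z/q\Z,\ a \neq 0} |S(a/q)|^2 \geq \frac{\omega(q)}{q-\omega(q)} |S(0)|^2 .$$

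For each residue $b$ mod $q$, set $Z(b) := \sum_{n \equiv b\ (q)} f(n)$.

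\begin{itemize}
\item By Fourier inversion on $\Z/q\Z$, $\sum_{a} |S(a/q)|^2 = q \sum_b |Z(b)|^2$.
\item Since $f$ vanishes on $\omega(q)$ residue classes, $Z(b)$ is supported on at most $q-\omega(q)$ classes.
\item Cauchy--Schwarz then gives $|S(0)|^2 = |\sum_b Z(b)|^2 \leq (q-\omega(q)) \sum_b |Z(b)|^2$.
\item Hence $\sum_{a \neq 0} |S(a/q)|^2 \geq \left(\frac{q}{q-\omega(q)} - 1\right)|S(0)|^2 = \frac{\omega(q)}{q-\omega(q)} |S(0)|^2$.
\end{itemize}

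None of this uses primality of $q$, only that $\omega(q)$ residue classes are excluded.

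For the induction on $k$, let $Q = q_1 \cdots q_{k-1}$ and $q = q_k$, which are coprime. By CRT, every $a$ mod $Qq$ with $q_j \nmid a$ for all $j$ decomposes uniquely as
$$\frac{a}{Qq} \equiv \frac{b}{Q} + \frac{c}{q} \pmod 1,$$
with $b$ a unit-type residue in the sense that $q_j \nmid b$ for $j<k$, and $c \not\equiv 0 \pmod q$. (Here $q_j \nmid a$ holds iff the corresponding CRT component is nonzero mod $q_j$.)

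Fix $b$ and apply the single-modulus inequality to the twisted function $g_b(n) := f(n) e(-bn/Q)$. This function vanishes on the same $\omega(q)$ residue classes mod $q$ as $f$, and $S_{g_b}(c/q) = S(b/Q + c/q)$. Therefore
$$\sum_{c \neq 0} |S(b/Q + c/q)|^2 \geq \frac{\omega(q)}{q-\omega(q)} |S(b/Q)|^2 .$$

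Summing over admissible $b$ and invoking the induction hypothesis for $q_1, \dots, q_{k-1}$ gives the product of the factors $\frac{\omega(q_j)}{q_j - \omega(q_j)}$ for $j \le k$.

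The only delicate point is bookkeeping. One must check that the condition "$q_j \nmid a$ for all $j$" corresponds exactly, under CRT, to all components being nonzero. One must also check that the twist by $e(-bn/Q)$ preserves the vanishing on the residue classes mod $q$; it does trivially, since it only multiplies $f$ pointwise. Pairwise coprimality is used precisely in the CRT decomposition.
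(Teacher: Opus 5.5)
Your proof is correct, but it is organized differently from the paper's.

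\textbf{Your route.} You prove the one-modulus inequality by Parseval on $\Z/q\Z$ plus Cauchy--Schwarz over the residue-class sums $Z(b)$. You then tensorize by induction on $k$: you use the CRT splitting $a \equiv bq + cQ \pmod{Qq}$ and apply the $k=1$ case to the modulated functions $f(n)e(-bn/Q)$. Your bookkeeping is right. For $j<k$ one has $a \equiv bq \pmod{q_j}$, and $(q,q_j)=1$ gives $q_j \mid a \iff q_j \mid b$; likewise $q_k \mid a \iff q_k \mid c$. So the admissible frequencies factor exactly as you claim.

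\textbf{The paper's route.} The paper uses no induction and treats all $k$ moduli at once. It writes down the explicit $q_1\cdots q_k$-periodic weight $\nu(n) = \prod_{j}(\omega(q_j) - q_j 1_{S_{q_j}}(n))$, which has three properties:
\begin{itemize}
\item it equals the constant $\prod_j \omega(q_j)$ on the support of $f$;
\item its Fourier coefficients $c_a$ vanish whenever some $q_j \mid a$, because each factor has mean zero and the factors live on coprime moduli;
\item by Plancherel, $\sum_a |c_a|^2 = \prod_j \omega(q_j)(q_j-\omega(q_j))$.
\end{itemize}
Pairing $f$ against $\nu$ and applying a single Cauchy--Schwarz in frequency space then gives the inequality.

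\textbf{Comparison.} The paper's argument is shorter and exhibits the extremal dual weight explicitly, which is Selberg's viewpoint. There, pairwise coprimality enters only through the factorization of the Fourier coefficients of a product of functions on coprime moduli. Yours is the classical Montgomery multiplicativity argument. It is more elementary and modular: it isolates a one-modulus inequality, and its CRT step shows why the frequency condition is $q_j \nmid a$ for each $j$ rather than $(a,q_1\cdots q_k)=1$. Your single-modulus step is essentially dual to the paper's $k=1$ case: you apply Cauchy--Schwarz in physical space, over residue classes, where the paper applies it in frequency space. Both arguments tacitly need $\omega(q_j) < q_j$ for the constants to make sense.
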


\begin{proof}  For each $q \in {\mathcal Q}$, let $S_q$ denote the union of the $\omega(q)$ residue classes modulo $q$ on which $f$ vanishes, and set
$$ \nu(n) \coloneqq \prod_{j=1}^k (\omega(q_j) - q_j 1_{S_{q_j}}(n)).$$
Then $\nu$ is $q_1 \dots q_k$ periodic and
$$ \frac{1}{q_1 \dots q_k} \sum_{n \in \Z/q_1 \dots q_k} \nu(n)^2 = \prod_{j=1}^k \omega(q_j) (q_j - \omega(q_j)).$$
Thus by Fourier expansion and Plancherel's theorem one can write
$$ \nu(n) = \sum_{a \in \Z/q_1 \dots q_k} c_a e\left(\frac{an}{q_1 \dots q_k}\right)$$
for some coefficients $c_a$ with
\begin{equation}\label{ca2}
 \sum_{a \in \Z/q_1 \dots q_k} |c_a|^2 =  \prod_{j=1}^k \omega(q_j) (q_j - \omega(q_j)).
\end{equation}
Since each of the factors $\prod_{j=1}^k \omega(q_j) (q_j - \omega(q_j))$ has mean zero, we see that $c_a$ vanishes if $a$ is divisible by any of the $q_1,\dots,q_k$.

On the support of $f$, we have $\nu(n) = \prod_{j=1}^k \omega(q_j)$, thus
$$ \left(\prod_{j=1}^k \omega(q_j)\right) \sum_n f(n) = \sum_n f(n) \nu(n) = \sum_{\substack{a \in \Z/q_1 \dots q_k\\q_1,\dots,q_k \nmid a}} c_a \sum_n f(n) e(an/q_1 \dots q_k).$$
Applying Cauchy--Schwarz and \eqref{ca2}, one obtains the claim.
\end{proof}

\begin{corollary}[Large sieve]\label{sieve} Let ${\mathcal Q}$ be a finite collection of pairwise coprime natural numbers $q$.  Let $I$ be an interval of length $|I| \geq 1$, and for each $q \in {\mathcal Q}$ let us remove $\omega(q)$ residue classes modulo $q$ from $I$.  Then the number of surviving natural numbers in $I$ is at most
$$ \ll \frac{|I|}{\sum_{k=0}^\infty \sum_{\substack{q_1<\dots<q_k\\q_1,\dots,q_k \in {\mathcal Q}\\q_1 \dots q_k \leq \sqrt{|I|}}}
\prod_{j=1}^k \frac{\omega(q_j)}{q_j - \omega(q_j)}}.$$
\end{corollary}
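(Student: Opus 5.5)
We derive \Cref{sieve} from \Cref{montgomery} by the usual duality with the analytic large sieve inequality. Let $f$ be the indicator function of the surviving natural numbers in $I$, and let $Z \coloneqq \sum_n f(n)$ be their number. By construction, for each $q \in {\mathcal Q}$ the function $f$ vanishes on the $\omega(q)$ removed residue classes modulo $q$. So \Cref{montgomery} applies to $f$ and to any distinct $q_1,\dots,q_k \in {\mathcal Q}$. The empty product, $k=0$, is the trivial identity $|\sum_n f(n)|^2 = Z^2$.

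Set $Q \coloneqq \sqrt{|I|}$ and sum the inequality of \Cref{montgomery} over all $k \geq 0$ and all tuples $q_1 < \dots < q_k$ in ${\mathcal Q}$ with $q_1 \cdots q_k \leq Q$. The right-hand side becomes $Z^2$ times the denominator $L$ appearing in the statement. On the left we get a sum of $|\hat f(a/q)|^2$, where $q = q_1 \cdots q_k$ and $a$ ranges over residues modulo $q$ that are divisible by none of the $q_j$.

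The key observation is that the resulting fractions $a/q$ are distinct points of $\R/\Z$. Because the $q_j$ are pairwise coprime, the condition $q_j \nmid a$ for every $j$ means that, after reduction to lowest terms, the denominator of $a/q$ is divisible by a nontrivial divisor $d_j$ of each $q_j$. Coprimality then ensures that the reduced denominator together with its factorization pins down which $q_j$ are present. The point I would check carefully is that the reduced denominator is $\prod_j d_j$ with $d_j \mid q_j$ and $d_j > 1$. Since the $q_j$ are pairwise coprime, each $d_j$ divides exactly one element of ${\mathcal Q}$, so the set $\{q_1,\dots,q_k\}$, and hence $q$ and $a$, are recovered from the reduced fraction.

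All these denominators are at most $Q$, so the points are $Q^{-2}$-separated in $\R/\Z$. The analytic large sieve inequality (Montgomery--Vaughan) then gives
$$ \sum |\hat f(a/q)|^2 \leq (|I| + Q^2) \sum_n |f(n)|^2 \ll |I| \, Z. $$
Combining this with the lower bound from the summed Montgomery inequalities gives $Z^2 L \ll |I|\, Z$, that is $Z \ll |I|/L$, which is the claim.

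The only genuinely new point compared with the prime case is the distinctness of the fractions, and it is handled by the coprimality argument above. Everything else is the standard argument: the analytic large sieve inequality for well-spaced points, which is a standard tool rather than one of the paper's stated results, fed by the uncertainty principle of \Cref{montgomery}.
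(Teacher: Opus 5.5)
Your proposal is correct and follows the paper's proof essentially verbatim. You apply \Cref{montgomery} to the indicator of the survivors, sum over all tuples with $q_1 \cdots q_k \leq \sqrt{|I|}$, and bound the left-hand side by $\ll |I| \cdot Z$ via the analytic large sieve, using the $1/|I|$-separation of the fractions. Your coprimality argument for the distinctness of the fractions $a/(q_1\cdots q_k)$ is a valid filling-in of a step the paper only asserts: the tuple is recovered as the set of $q' \in {\mathcal Q}$ sharing a nontrivial factor with the reduced denominator.
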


\begin{proof}  Let $E$ denote the set of surviving natural numbers in $I$.  Then by \Cref{montgomery} with $f=1_E$ we can lower bound the quantity
\begin{equation}\label{kang}
\sum_{k=0}^\infty \sum_{\substack{q_1<\dots<q_k\\q_1,\dots,q_k \in {\mathcal Q}\\q_1 \dots q_k \leq \sqrt{|I|}}}
\sum_{\substack{a \in \Z/q_1 \dots q_k\\q_1,\dots,q_k \nmid a}} |\sum_n 1_E(n) e(-an/q_1 \dots q_k)|^2
\end{equation}
by
$$
|E|^2 \sum_{k=0}^\infty \sum_{\substack{q_1<\dots<q_k\\q_1,\dots,q_k \in {\mathcal Q}\\q_1 \dots q_k \leq \sqrt{|I|}}}
\prod_{j=1}^k \frac{\omega(q_j)}{q_j - \omega(q_j)}.$$
On the other hand, the fractions $\frac{a}{q_1 \dots q_k}$ that appear in \eqref{kang} are distinct modulo $1$, and at least $1/|I|$-separated, thus by the large sieve inequality (see, e.g., \cite[Theorem 1]{montgomery-vaughan}) this expression is at most
$$ \ll |I| \sum_n |1_E(n)|^2 = |I| |E|.$$
The claim follows.
\end{proof}

Suppose $k$ is such that the product of any $k$ elements of ${\mathcal Q}$ is at most $\sqrt{|I|}$.  From symmetrization and summing the indices $q_j$ one by one, we obtain
$$
\sum_{\substack{q_1<\dots<q_k\\q_1,\dots,q_k \in {\mathcal Q}\\q_1 \dots q_k \leq \sqrt{|I|}}}
\prod_{j=1}^k \frac{\omega(q_j)}{q_j - \omega(q_j)} \geq \frac{1}{k!} \left( \sum_{q \in {\mathcal Q}}^{[-k]} \frac{\omega(q)}{q - \omega(q)}\right)^k$$
where the superscript $[-k]$ in the sum means that we remove the $k$ largest elements of the sum (with the convention that this sum vanishes if there are at most $k$ terms).  Using the trivial bound $k! \leq k^k$, we conclude the following simplified version of \Cref{sieve} (cf., \cite[Theorem 3.1]{vaughan}):

\begin{corollary}[Simplified large sieve]\label{large-sieve} Let ${\mathcal Q}$ be a finite collection of pairwise coprime natural numbers $q$.  Let $I$ be an interval of length $|I| \geq 1$, and for each $q \in {\mathcal Q}$ let us remove $\omega(q)$ residue classes modulo $q$ from $I$.  and let $k \geq 0$ be such that any product of $k$ elements of ${\mathcal Q}$ is at most $\sqrt{|I|}$.  Then the number of surviving elements is at most
$$ \ll \frac{|I|}{\left( \frac{1}{k}\sum_{q \in {\mathcal Q}}^{[-k]} \frac{\omega(q)}{q - \omega(q)}\right)^k},$$
with the convention that the bound is vacuously true when the denominator vanishes.
\end{corollary}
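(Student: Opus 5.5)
The statement to prove is \Cref{large-sieve}. The plan is to derive it from \Cref{sieve} by lower bounding the denominator there with a single $k$-fold term, as the paragraph before the statement sketches. The denominator in \Cref{sieve} is a sum of nonnegative terms over $k'\ge 0$. We discard every term except $k'=k$, and keep only those $k$-tuples $q_1<\dots<q_k$ avoiding the $k$ largest values of $\omega(q)/(q-\omega(q))$. By hypothesis, every product of $k$ elements of ${\mathcal Q}$ is at most $\sqrt{|I|}$, so the constraint $q_1\cdots q_k\le \sqrt{|I|}$ is automatic for all such tuples. If $k=0$, the denominator contains the empty-product term $1$, so the bound $|I|$ holds trivially; if the restricted sum vanishes, the claim is vacuous by convention.

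The main step is the combinatorial inequality. Let ${\mathcal Q}'$ be ${\mathcal Q}$ with the $k$ elements of largest weight $w(q)\coloneqq \omega(q)/(q-\omega(q))$ removed, breaking ties arbitrarily. Since $w\ge 0$ (we may assume $\omega(q)<q$, else nothing survives and the claim is trivial), we aim to show
$$\sum_{\substack{q_1<\dots<q_k\\ q_j\in{\mathcal Q}}} \prod_{j=1}^k w(q_j)\ \ge\ \frac{1}{k!}\Bigl(\sum_{q\in{\mathcal Q}'} w(q)\Bigr)^k - (\text{terms with repeated indices}).$$
A cleaner route avoids the repeated-index terms altogether, by choosing the $q_j$ one at a time. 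Order the choice greedily: for $j=1,\dots,k$, pick $q_j\in{\mathcal Q}$ distinct from $q_1,\dots,q_{j-1}$. Summing over ordered distinct $k$-tuples gives $k!$ times the unordered sum. At the $j$-th step the excluded set has at most $j-1<k$ elements, so the inner sum over the admissible $q_j$ is at least the sum of $w$ over ${\mathcal Q}$ minus its $k$ largest values, which is $\sum^{[-k]}_{q\in{\mathcal Q}} w(q)$. Iterating $k$ times shows that the ordered sum is at least $(\sum^{[-k]} w)^k$. Hence the unordered sum is at least $\frac{1}{k!}(\sum^{[-k]}w)^k$.

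Finally, $k!\le k^k$, so
$$\frac{1}{k!}\Bigl(\sum^{[-k]} w\Bigr)^k\ \ge\ \Bigl(\frac1k\sum^{[-k]} w\Bigr)^k.$$
Inserting this into \Cref{sieve} gives the claimed bound. The only delicate point is the "remove the top $k$" bookkeeping in the iterated lower bound; the greedy ordered-tuple argument above handles it directly.
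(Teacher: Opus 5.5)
Your proposal is correct and is essentially the paper's argument: keep only the $k$-th term of the denominator in \Cref{sieve} (the constraint $q_1\cdots q_k\le\sqrt{|I|}$ being automatic by hypothesis), symmetrize to ordered distinct tuples, sum the indices one at a time so that each step loses at most the $k$ largest weights, and finish with $k!\le k^k$. One small caution: the greedy step must range over all of ${\mathcal Q}$, as your third paragraph has it, and not over the pruned set ${\mathcal Q}'$ from your first paragraph. Pruning first and then excluding up to $k-1$ further elements would only give $\sum^{[-(2k-1)]}$ rather than $\sum^{[-k]}$.
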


\subsection{Linear relations between squares or powerful numbers}

With some elementary algebraic number theory, we have a good bound on the number of solutions to linear equations involving square numbers (or equivalently, counting lattice points on a hyperbola).

\begin{lemma}[Squares in linear relation]\label{squarecount}  Let $x$ be large, and let $a,b$ be natural numbers and $h$ a non-zero integer with $a,b,h \ll x^{O(1)}$.  Then
    $$ \# \{ (n,m) \in \N^2: an^2+h = bm^2, n \leq x \} \ll x^{o(1)}$$
    as $x \to \infty$.
\end{lemma}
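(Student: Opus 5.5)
Multiplying $an^2+h=bm^2$ by $a$ turns it into the norm equation
$$ X^2 - D Y^2 = -ah, \qquad X \coloneqq an, \quad Y \coloneqq m, \quad D \coloneqq ab, $$
and $(n,m) \mapsto (X,Y)$ is injective. It therefore suffices to show that $X^2-DY^2=k$ has $\ll x^{o(1)}$ solutions with $1 \leq X \ll x^{O(1)}$, where $k=-ah \neq 0$ and $D$ are both $\ll x^{O(1)}$.

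\emph{Case 1: $D$ is a perfect square, $D=d^2$.} Then $(X-dY)(X+dY)=k$. The pair $(X,Y)$ is determined by the factor $X-dY$, which is a divisor of $k$ up to sign. The divisor bound $\tau(k) \ll |k|^{o(1)} = x^{o(1)}$ then gives the claim.

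\emph{Case 2: $D$ is not a square.} Set $K=\Q(\sqrt D)$. Each solution gives an element $\alpha = X+Y\sqrt D$ of the order $\Z[\sqrt D]\subset\mathcal O_K$ with $N(\alpha)=k$. The count then splits into two factors.

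\emph{Step 1: principal ideals.} The ideal $\alpha\mathcal O_K$ has norm $|k|$. The number of ideals of $\mathcal O_K$ of norm $|k|$ is at most $\tau(|k|)^2 \ll x^{o(1)}$, since each rational prime splits into at most two prime ideals. So there are $\ll x^{o(1)}$ possible principal ideals.

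\emph{Step 2: generators of a fixed ideal.} Fix one such ideal, generated by $\alpha_0$ (note $D\le x^{O(1)}$ but $D$ can be huge). Any solution $\alpha$ generating it has the form $\alpha=\pm\alpha_0\varepsilon^j$, where $\varepsilon>1$ is the fundamental unit of $\mathcal O_K$. Since $\alpha$ and its conjugate $\bar\alpha$ both have absolute value at most $X+Y\sqrt D \ll x^{O(1)}$, and $|\alpha\bar\alpha| = |k|\ge1$, we get $|\bar\alpha| \geq |k|/|\alpha| \gg x^{-O(1)}$. Hence $\log|\alpha/\bar\alpha| = O(\log x)$. But $\log|\alpha/\bar\alpha| = \log|\alpha_0/\bar\alpha_0| + 2j\log\varepsilon$. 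Combined with the lower bound $\varepsilon \geq (1+\sqrt5)/2$, this confines $j$ to an interval of length $O(\log x)$. Thus each ideal yields only $O(\log x) = x^{o(1)}$ solutions. (Restricting to the subgroup of units lying in the order only reduces the count, so it is harmless.)

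Multiplying the two steps gives $\ll x^{o(1)}$ in total.

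The main subtlety is Step 2: one must bound the number of unit multiples using only the size constraint, without any control on the regulator. This works because $\varepsilon$ is bounded below uniformly and the unit group has rank one. Handling the non-maximal order, rather than passing to $\mathcal O_K$, also needs a brief check, but passing to $\mathcal O_K$ only enlarges the set being counted.
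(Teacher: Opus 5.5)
Your proof is correct and follows essentially the same route as the paper. Both use the divisor bound when $ab$ is a square, and otherwise view the equation as a norm equation in $\mathbb{Q}(\sqrt{D})$, bounding the number of principal ideals of the given norm by the divisor bound and the number of generators of bounded size by $O(\log x)$ via the rank-one unit group with $\varepsilon \geq (1+\sqrt5)/2$. Two differences are cosmetic: you multiply by $a$ instead of extracting the squarefree part $D$ of $ab$, and you localize $j$ through $\log|\alpha/\bar\alpha| = \log|\alpha_0/\bar\alpha_0| + 2j\log\varepsilon$, which is a slightly cleaner way to make the paper's height bound on $\varepsilon_D^j$ precise.
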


\begin{proof}  This is essentially a special case of \cite[Lemma 4]{cilleruelo}.  In fact, as demonstrated in \cite[Proposition 1]{cilleruelo} (see also \cite{vaughan-wooley}), one can replace $an^2+h-bm^2$ by a more general quadratic form of height $x^{O(1)}$ (thus replacing the hyperbola by a more general conic section). For the convenience of the reader we give a self-contained proof of this result here, which was initially provided by ChatGPT and then rewritten by the author, and uses the same basic input as the arguments in \cite{cilleruelo}, namely the theory of the Pell equation.
    
Write that $ab = Dc^2$ for some squarefree $1 \leq D \ll x^{O(1)}$, then the equation can be rearranged as
\begin{equation}\label{bmn}
(bm+cn\sqrt{D})(bm-cn\sqrt{D}) = bh.
\end{equation}
The right-hand side of \eqref{bmn} is of size $O(x^{O(1)})$. If $D=1$, then by the divisor bound there are only $O(x^{o(1)})$ possible choices for $bm+cn$ and $bm-cn$, giving the claim.

Now suppose that $D>1$.  The left-hand side of \eqref{bmn} can be viewed as the norm of $bm+cn\sqrt{D}$ in the ring of integers ${\mathcal O}$ of $\Q(\sqrt{D})$. It will thus suffice to show that for a given norm $N = O(x^{O(1)})$, that there are at most $O(x^{o(1)})$ elements of ${\mathcal O}$ of norm $N$ and height\footnote{The precise definition of height is not crucial for this argument, but one can for instance assign to $m+n\sqrt{D}$ the naive height $\max(|m|, |n|, D)$.}  $O(x^{O(1)})$.  By the theory of the Pell equation (see, e.g., \cite{pell}), the elements of norm $N$ split into orbits of the norm one unit group, which is a subgroup (of index at most two) of the group generated by a single fundamental unit $\eps_D$ and $\{\pm 1\}$.  The fundamental unit can be chosen to be of the form $s+t\sqrt{D}$ with $s,t \geq 1$, so in particular $\eps_D \geq 2$.  The heights of integer powers $\eps_D^j$ of the fundamental unit are then lower bounded by $\gg \exp(c|j|)/D^{O(1)}$ for some absolute constant $c>0$, so each orbit can only contain $O(\log x^{O(1)}) = O(x^{o(1)})$ elements of height $O(x^{O(1)})$ (cf., \cite[\S 4]{aktas}).  Thus it suffices to show that there are $O(x^{o(1)})$ orbits of norm $N$.  The number of such orbits is bounded by the number of ideal divisors of $(N)$.  Because each rational prime $(p)$ splits into at most two prime ideals in $\mathcal{O}$, the number of ideal divisors of $(N)$ is at most the square of the number of rational divisors of $N$, and the claim follows from the divisor bound.
\end{proof}

Now we replace squares by powerful numbers.

\begin{corollary}[Powerful numbers in linear relation]\label{powercount} Let $x$ be large, and let $a,b$ be natural numbers and $h$ a non-zero integer with $a, b, h \ll x$.  Then
 $$ \# \{ (n,m) \in \mathcal{VB}^1: an+h = bm, an \leq x \} \ll x^{\frac{2}{5}+o(1)}.$$
\end{corollary}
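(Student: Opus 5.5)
We write $n = r^2 s^3$ and $m = u^2 v^3$ with $s,v$ squarefree, using the unique square-times-cube representation of powerful numbers recalled after \eqref{vbad}. The equation $an+h=bm$ becomes
$$ (as^3) r^2 + h = (bv^3) u^2 .$$
The constraint $an \leq x$ gives $n \leq x$, hence $s \leq x^{1/3}$ and $r \leq \sqrt{x/s^3}$. Since $bm = an+h \ll x$, we likewise get $m \ll x$, hence $v \ll x^{1/3}$ and $u \ll \sqrt{x/v^3}$. The plan is to fix the pair $(s,v)$ and count the remaining $(r,u)$ in one of two ways, choosing whichever is cheaper.

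\textbf{Method 1 (pair fixed, hyperbola count).} For fixed $s,v$, the coefficients $as^3$, $bv^3$ and $h$ are all $O(x^{O(1)})$ and $h \neq 0$. By \Cref{squarecount}, the number of $(r,u)$ is therefore $x^{o(1)}$. The total contribution of pairs with $s \leq S$ and $v \leq V$ is then $\ll SV x^{o(1)}$.

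\textbf{Method 2 (trivial count).} If one of $s,v$ is large, we drop the equation partly. Once $(r,s)$ is fixed, $n$ is fixed, so $m$ is determined, and $m$ has at most one square-times-cube representation. Hence the number of solutions with $s > S$ is at most the number of pairs $(r,s)$ with $s > S$ and $r \leq \sqrt{x/s^3}$, which is
$$ \ll \sum_{s > S} \sqrt{x/s^3} \ll \sqrt{x}\, S^{-1/2}. $$
Symmetrically, fixing $(u,v)$ determines $m$, hence $n$, so the solutions with $v > V$ number $\ll \sqrt{x}\, V^{-1/2}$, up to harmless constants from $m \ll x$. Here $h$ is nonzero, but in any case $n$ determines $m$.

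\textbf{Balancing.} Combining the two methods, the total is
$$ \ll SV x^{o(1)} + \sqrt{x}\,(S^{-1/2}+V^{-1/2}). $$
Taking $S=V=x^{1/5}$ makes both terms $x^{2/5+o(1)}$, which is the claimed bound $x^{2/5+o(1)}$.

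\textbf{Main obstacle.} There is no serious difficulty here. One must check that the uniformity in \Cref{squarecount} really covers every pair $(s,v)$, i.e.\ that its coefficient range $x^{O(1)}$ is satisfied. This holds because $a,b,h \ll x$ and $s,v \ll x^{1/3}$, so all coefficients are polynomially bounded. One should also note that the $x^{o(1)}$ bound in \Cref{squarecount} is uniform over $h$ and the coefficients, so summing it over $SV$ pairs loses nothing beyond the factor $SV$. The rest is bookkeeping.
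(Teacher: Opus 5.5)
Your proof is correct and rests on the same ingredients as the paper's proof. Those ingredients are the square-times-cube decomposition, the trivial bound from fixing one powerful number (which determines the other), and \Cref{squarecount} once both cube parts are fixed. The only difference is bookkeeping. The paper dyadically localizes all four variables and takes the weighted geometric mean $(N_1N_2)^{2/5}(M_1M_2)^{2/5}(N_2M_2)^{1/5}$ of its three bounds, using the divisor bound because it does not require the cube parts to be squarefree. You instead split at the threshold $s,v \le x^{1/5}$ and sum the trivial bound directly over the large cube parts.
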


This result was already established in the $a=b=1$ case in \cite{chan} (and the $a=b=h=1$ case, with the $x^{o(1)}$ loss deleted, is precisely \eqref{vbvb}).  Our arguments here are broadly similar to those in \cite{aktas} to prove \eqref{vbvb}.  In view of \eqref{erd-consec} (and standard probabilistic heuristics) it is natural to conjecture that the exponent $\frac{2}{5}$ can be deleted in the above corollary, but we make no progress in this direction.

\begin{proof} Every powerful number is the product of a square and a cube, so it suffices to show that\footnote{One could additionally impose the requirement that $n_2, m_2$ be squarefree if desired, although this is unlikely to improve the bounds by more than a multiplicative constant.}
$$ \# \{ (n_1,n_2,m_1,m_2) \in \N^4: an_1^2 n_2^3+h = bm_1^2 m_2^3, a n_1^2 n_2^3 \ll x \} \ll x^{\frac{2}{5}+o(1)}.$$
Note that the constraint $a n_1^2 n_2^3 \ll x$ is equivalent to $b m_1^2 m_2^3 \ll x$.
By dyadic decomposition (paying some factors of $\log x = x^{o(1)}$) we may impose the constraints
$$ n_1 \asymp N_1; \quad n_2 \asymp N_2; \quad m_1 \asymp M_1; \quad m_2 \asymp M_2$$
for some $1 \leq N_1,N_2,M_1,M_2 \ll x$ (the bounds we give below will be uniform with respect to these parameters). The contribution vanishes unless
\begin{equation}\label{nmnm}
 N_1^2 N_2^3, M_1^2 M_2^3 \ll x.
\end{equation}
We estimate the count in several different ways.  There are $O(N_1 N_2)$ choices for $n_1, n_2$, and then by the divisor bound there are at most $O(x^{o(1)})$ choices for $m_1,m_2$.  Thus one upper bound is
$$ \ll N_1 N_2 x^{o(1)};$$
similarly we have an upper bound of
$$ \ll M_1 M_2 x^{o(1)}.$$
Next, there are $O(N_2 M_2)$ choices for $n_2, m_2$, and then by \Cref{squarecount} there are at most $O(x^{o(1)})$ choices for $n_1, m_1$, so another bound is
$$ \ll N_2 M_2 x^{o(1)};$$
Raising the first two bounds to the power $2/5$ and the third to the power $1/5$, we obtain an averaged bound of
$$ \ll (N_1^2 N_2^3)^{1/5} (M_1^2 M_2^3)^{1/5} x^{o(1)}$$
and the claim follows from \eqref{nmnm}.
\end{proof}

\begin{remark}\label{rem-divisor}  Using a more refined version of the divisor bound, one can improve the $x^{o(1)}$ factors in the above bounds slightly to $x^{O(1/\log_2 x)}$.  The proof of \Cref{powercount} also reveals that for the purposes of improving the $\frac{2}{5}$ exponent, the critical case is when $n_1, n_2, m_1, m_2$ are all comparable to $x^{1/5}$.  A model problem here is to show that the number of integer points on the three-dimensional variety
\begin{equation}\label{nm}
\{ (n_1,n_2,m_1,m_2): n_1^2 n_2^3+1 = m_1^2 m_2^3\}
\end{equation}
of height at most $H$ is $o(H^2)$ as $H \to \infty$; by using the arguments from \cite{aktas}, this would allow one to improve the right-hand side of \eqref{vbvb} to $o(x^{2/5})$ as $x \to \infty$.  See \cite{tao-elsholtz}, \cite{hb-cubic} for some other problems on counting integer points on varieties that have a somewhat similar flavor.  In\footnote{We thank Tim Browning for these references.} \cite{browning-vanvalckenborgh}, the question of counting the triples $a,b,c$ of powerful numbers with $a+b=c$ was considered.  The corresponding model problem \eqref{nm} in this context is that of counting integer points in the five-dimensional variety
$$\{ (n_1,n_2,m_1,m_2, l_1, l_2): n_1^2 n_2^3+ m_1^2 m_2^3 = l_1^2 l_2^3\}.$$
of height at most $H$.  The arguments in \cite{browning-vanvalckenborgh} give an upper bound of $O(H^{3+o(1)})$, which is analogous to the upper bound of $O(H^{2+o(1)})$ on \eqref{nm} that can be obtained by our methods.  This was very recently improved to $O(H^{3-\frac{3}{311}+o(1)})$ by Heath-Brown \cite{hb-counting}.  The arguments rely on extracting additional linear congruence relations between $n_1, n_2, m_1, m_2$ modulo $l_2$, $l_2^2$, and $l_2^3$, after fixing the ratio mod $l_2$ of $m_1 m_2$ and $l_1 l_2$; see \cite[Lemma 4.1]{hb-counting}.  It is not clear to the author whether similar arguments are available in the lower-dimensional setting of counting points on \eqref{nm}.
\end{remark}

\section{Controlling the very bad intervals}\label{verybad-sec}

In this section we establish \Cref{verybad-thm}.  We begin by using the equidistribution estimate (\Cref{vinogradov}) to show that very bad intervals cannot be too long.

\begin{lemma}[Very bad intervals are short]\label{hvin}  Let $\{N+1,\dots,N+H\}$ be a very bad interval.  Then $H < N$.  Furthermore one has the improved bound
$$ H \leq \exp(\log^{2/3+o(1)} N)$$
as $N \to \infty$.
\end{lemma}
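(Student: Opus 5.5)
The plan is to prove the crude bound $H < N$ with Bertrand's postulate, and then the improved bound by contradiction using \Cref{vinogradov} with $M=N$, $j=2$, handling the regime where $H$ is a power of $N$ separately by elementary counting.

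\emph{The bound $H<N$.} Suppose $H \geq N$ and $H \geq 2$; the degenerate case $N=0$, $H=1$ is excluded by the footnote convention. By \Cref{prime-interval}(i) there is a prime $p$ with $(N+H)/2 < p \leq N+H$. Since $(N+H)/2 \geq N$, this $p$ lies in $\{N+1,\dots,N+H\}$. As $2p > N+H$, it is the only multiple of $p$ in the interval. As $p^2 > N+H$, it is divisible by $p$ exactly once. Hence $p$ divides the product \eqref{nhint} exactly once, contradicting \Cref{def-bad}(ii).

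\emph{Reduction for the improved bound.} Suppose $H \geq \exp(\log^{2/3+\eps} N)$ for some fixed $\eps>0$ and $N$ large. Work with primes $p$ in $I \coloneqq [1.1H, 1.2H]$, so $P \asymp H$. Since $p > H$, the interval contains at most one multiple of $p$. Very badness forces: whenever the interval contains a multiple $N+j$ of $p$, also $p^2 \mid N+j$. In terms of fractional parts this reads as follows. Writing $t$ for $p$, the interval contains a multiple of $p$ iff $\{N/t\} \geq 1 - H/t$. Here $1-H/t \in [0.09,0.17]$, so this certainly holds when $\{N/t\} \in [0.2,0.9]$. Divisibility of some $N+j$ by $p^2$ forces $\{N/p^2\} \geq 1 - H/p^2$, which lies within $O(1/H)$ of $0$ mod $1$.

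\emph{The equidistribution contradiction.} Take $W(s,t) = \varphi(s)(1-\psi(t))$, where $\varphi \geq 0$ is a fixed smooth $1$-periodic bump supported in $[0.2,0.9]$ mod $1$. Let $\psi$ be a fixed smooth $1$-periodic bump equal to $1$ on a neighbourhood of $0$ of width $\delta$, vanishing outside width $2\delta$, with $\delta$ a small absolute constant. Then $\|W\|_{C^3} = O(1)$, and by the previous paragraph $W(N/p, N/p^2) = 0$ for every prime $p \in I$, once $1/H < \delta$. On the other hand, the hypothesis $N \leq \exp(\log^{3/2-\eps'} P)$ holds for suitable $\eps'$. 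So \Cref{vinogradov} yields $0 = \int_I W(N/t, N/t^2)\,\frac{dt}{\log t} + O(P/\log^A P)$. It then remains to show the integral is $\gg P/\log P$, by a continuous equidistribution argument for the curve $t \mapsto (N/t, N/t^2)$ on $I$.

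\emph{The main obstacle: the continuous integral.} The coordinate $N/t$ has derivative $\asymp N/H^2$, which is large once $H \leq N^{1/2-c}$; so $\{N/t\}$ sweeps $[0.2,0.9]$ many times. The issue is whether $N/t^2$ avoids the $\delta$-neighbourhood of integers on a positive proportion of those sweeps.

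When $H \leq N^{1/4}$, $N/t^2$ has derivative $\gg N/H^3 \gg H$ per unit $t$. A two-dimensional van der Corput / stationary-phase computation (Fourier-expanding $W$ and integrating by parts in $t$ for each frequency pair) then shows the integral is $(1+o(1))\big(\int\varphi\big)\big(1-\int\psi\big)\int_I dt/\log t \gg P/\log P$. This is the desired contradiction, and it covers the regime $\exp(\log^{2/3+\eps}N) \leq H \leq N^{1/4}$.

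For $N^{1/4} < H < N$ I would avoid Vinogradov and count directly. A positive proportion of the $\gg H/\log H$ primes $p \in (H,2H]$ have a multiple in the interval, and each such $p$ needs $p^2 \mid N+j$ with $N+j \leq 2N$. I would bound the number of pairs $(p, k)$ with $kp^2 \in \{N+1,\dots,N+H\}$, splitting according to the size of $k \leq 2N/H^2$. Alternatively, I would reapply \Cref{vinogradov} on a different dyadic range of $p$, for instance $p \in (H, 2H]$ replaced by larger primes in $(H, N^{1/3}]$-type windows, where $N/t^2$ still oscillates.

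The step I expect to be hardest is exactly this intermediate range $N^{1/4} \lesssim H \lesssim N^{1/3}$. Here neither the derivative of $N/t^2$ is large nor is the naive count $O(N/H^2)$ below $H/\log H$. I would first try to close it by running the Vinogradov argument with $j=2$ on primes in a shorter subinterval of $I$, chosen so that $N/t^2$ still crosses many integers. Failing that, I would use the $H < N$ argument in a strengthened form, using \Cref{prime-interval}(ii) to find primes near $\sqrt{N}$-scale in the interval.
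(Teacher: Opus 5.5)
Your argument for $H<N$ is the paper's. Your set-up of the contradiction via \Cref{vinogradov} is also sound: a $W$ vanishing at $(N/p,N/p^2)$ for every prime in the window, plus the check that $N\le\exp(\log^{3/2-\eps'}P)$. But the proposal does not prove the lemma. It only treats $\exp(\log^{2/3+\eps}N)\le H\le N^{1/4}$, and for $N^{1/4}<H<N$ you offer candidate strategies that you yourself leave unresolved. The direct count does not close this gap. The claim that a positive proportion of primes in $(H,2H]$ have a multiple in the interval needs an equidistribution input for $\{N/p\}$ that you do not supply. Even granting it, the number of pairs $(p,k)$ with $p>H$ and $kp^2\in\{N+1,\dots,N+H\}$ is $O(N/H^2+1)$, which beats $H/\log H$ only when $H\gg(N\log N)^{1/3}$. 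So the range $N^{1/4}<H\lesssim N^{1/3}$ is genuinely uncovered.

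Two ideas are missing. First, since $H<N$, the Sylvester--Schur theorem (\Cref{classical}(i)) says the largest prime factor $p_0$ of \eqref{nhint} exceeds $H$. So $p_0$ divides exactly one term, very badness gives $p_0^2\mid N+h\le 2N$, and hence $H<p_0\le\sqrt{2N}$. This disposes of all $H>\sqrt{2N}$ at once.

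Second, for $H\le\sqrt{2N}$ you do not need the curve $t\mapsto(N/t,N/t^2)$ to equidistribute on the torus. That fails once $N/H^3\lesssim 1$, where $N/t^2$ does not even complete a period. Instead substitute $s=N/t$. The first coordinate $\{s\}$ runs through $\gg N/H$ full periods, while $s^2/N$ moves by only $O(1/H)$ across each unit $s$-interval. Hence every unit interval on which $\{s^2/N\}$ stays a fixed distance from $0$ contributes $\gg 1$ to $\int W$. The problem then reduces to a one-dimensional measure bound for $u=s^2/N=N/t^2$. With primes in $(H,2H]$ as in the paper ($W$ supported on $\{t_1\}\ge 0.9$, $\{t_2\}<0.9$), $u$ ranges over $[N/4H^2,N/H^2]$. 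Since $N/H^2\ge 1/2$, one sees by inspection that $\{u\}\in[0.02,0.88]$ on a subset of measure $\gg N/H^2$. This two-scale argument treats the whole range $\exp(\log^{2/3+\eps}N)<H\le\sqrt{2N}$ uniformly, so neither the van der Corput computation nor any case split is needed.
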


\begin{proof} If $H \geq N$ then by \Cref{prime-interval}(i) there is a prime $(N+H)/2 < p \leq N+H$ that divides the product \eqref{nhint} exactly once, a contradiction.  This gives the first claim $H < N$.   One could improve this bound using \Cref{prime-interval}(ii), but we can obtain a further improvement as follows.  If $p_0$ is the largest prime factor of \eqref{nhint}, then by \Cref{classical}(i) we have $p_0 > H$, so $p_0$ divides exactly one of the $N+1,\dots,N+H$, and then must divide it at least twice by \Cref{def-bad}(ii).  Thus
$$H < p_0 \leq \sqrt{N+H} \leq \sqrt{2N}.$$

It remains to show that
$$ H \leq \exp(\log^{2/3+\eps} N)$$
whenever $\eps>0$ and $N$ is sufficiently large depending on $\eps$.  Suppose for contradiction that
\begin{equation}\label{hnl}
 H > \exp(\log^{2/3+\eps} N).
\end{equation}

Every prime $p$ in the range $H < p \leq 2H$ divides at most one of the $N+1,\dots,N+H$, and if it does, must divide it at least twice.  This places a constraint on the fractional parts of $N/p$ and $N/p^2$. For instance, one cannot simultaneously have
    $$ 0.9 \leq \left\{\frac{N}{p}\right\} < 1$$
    and
    $$ 0 \leq \left\{\frac{N}{p^2}\right\} < 0.9$$
    as this would imply the existence of $1 \leq h \leq H$ such that $N+h$ is divisible by $p$ but not $p^2$.
    To exploit this, we introduce a smooth $\Z^2$-periodic function $W \colon \R^2 \to [0,1]$ supported on the region
    $$ \left\{ (t_1,t_2): 0.9 \leq \{t_1\} < 1; 0 \leq \{ t_2\} < 0.9 \right\}$$
    that equals one on
    $$ \left\{ (t_1,t_2): 0.91 \leq \{t_1\} \leq 0.99; 0.01 \leq \{ t_2\} < 0.89 \right\}$$
    and has a $C^3$ norm of $O(1)$.  By the previous discussion, we have
    $$ \sum_{H < p < 2H} W\left(\frac{N}{p}, \frac{N}{p^2}\right) = 0$$
    and hence by \Cref{vinogradov}
$$ \int_H^{2H} W\left(\frac{N}{t}, \frac{N}{t^2}\right) \frac{dt}{\log t} \ll \frac{H}{\log^{10} H}$$
(say), or on making the change of variables $s \coloneqq \frac{N}{t}$
    $$ \int_{N/2H}^{N/H} W\left(s, \frac{s^2}{N}\right) \ ds \ll \frac{N}{H\log^9 H}.$$
By the construction of $W$, we thus have
$$  \left| \left\{ \frac{N}{2H} \leq s \leq \frac{N}{H}: 0.91 \leq \left\{s\right\} \leq 0.99; 0.01 \leq \left\{ \frac{s^2}{N}\right\} \leq 0.89 \right\}\right| \ll \frac{N}{H\log^9 H}.$$
On any unit subinterval of $[\frac{N}{2H},\frac{N}{H}]$, the quantity $\frac{s^2}{N}$ only varies by $O(1/H)$, which is $o(1)$ thanks to \eqref{hnl}.  Hence, if one has
$$ 0.02 \leq \left\{ \frac{s^2}{N}\right\} \leq 0.88$$
for any $s$ in such a subinterval, then that subinterval will contribute at least $0.08$ to the above measure.  Covering $[\frac{N}{2H},\frac{N}{H}]$ by $\asymp \frac{N}{H}$ such intervals with bounded overlap, we conclude that
$$ \left| \left\{ \frac{N}{2H} \leq s \leq \frac{N}{H}: 0.02 \leq \left\{ \frac{s^2}{N}\right\} \leq 0.88 \right\}\right| \ll \frac{N}{H\log^9 H}$$
which after applying the further change of variables $u \coloneqq \frac{s^2}{N}$ becomes
\begin{equation}\label{valid}
 \left| \left\{ \frac{N}{4H^2} \leq u \leq \frac{N}{H^2}: 0.02 \leq \{ u \} \leq 0.88 \right\}\right|
\ll \frac{N}{H^2 \log^9 H}.
\end{equation}
Because $H \leq \sqrt{2N}$, we have $\frac{N}{H^2} \geq \frac{1}{2}$, and one can check that the left-hand side is $\gg \frac{N}{H^2}$; see \Cref{fig-slice}.  This gives the required contradiction.
\end{proof}

\begin{figure}
    \centering
\includegraphics[width=0.75\textwidth]{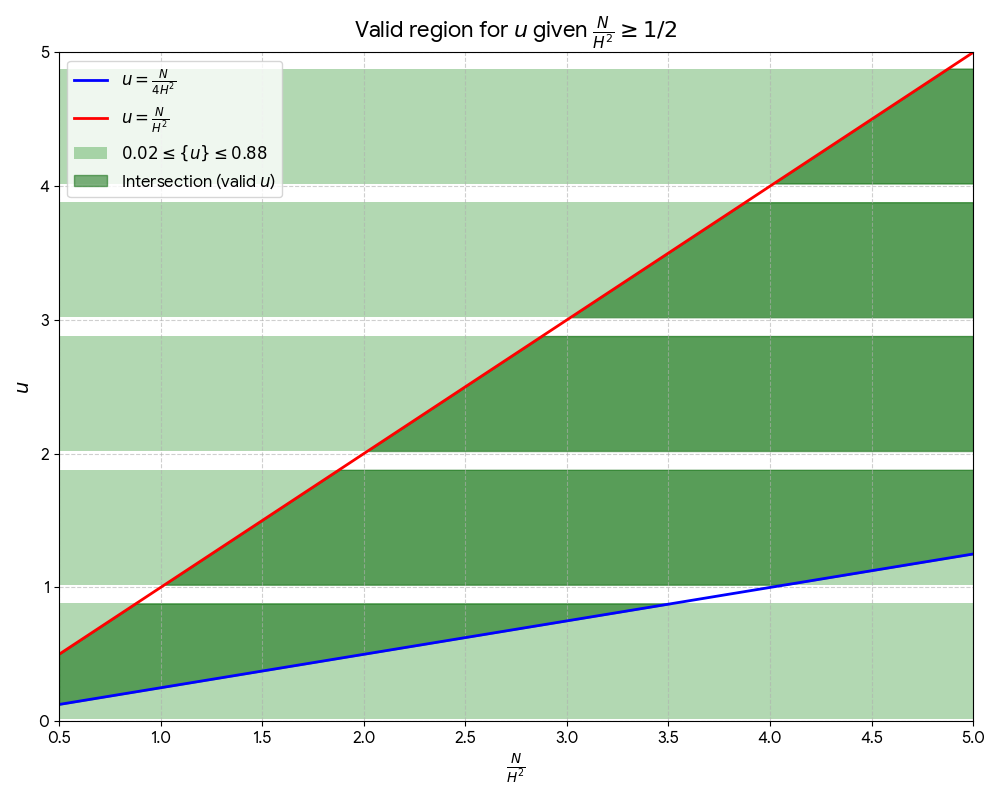}
    \caption{For a given choice of $\frac{N}{H^2}$, the set in \eqref{valid} is the shaded region between the blue and green lines intersected with the vertical line at $\frac{N}{H^2}$.  For $\frac{N}{H^2} \geq \frac{1}{2}$, this measure is always comparable to the distance $\frac{3}{4} \frac{N}{H^2}$ between these lines. (Image generated by Gemini.)}
    \label{fig-slice}
\end{figure}

Next, we show that very bad intervals of non-trivial length create a linear relation between two powerful numbers.

\begin{lemma}[Very bad intervals create a linear relation]\label{linrel}  Let $\{N+1,\dots,N+H\}$ be a very bad interval of length $H > 1$.  Then there is a solution to the equation
    $$ an + h = bm$$
    with $an, bm \in \{N+1,\dots,N+H\}$, $1 \leq h \leq H$, $a,b \ll H^{O(1)}$, and $n,m$ powerful.
\end{lemma}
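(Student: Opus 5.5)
Write each $N+i$ ($1\le i\le H$) as $N+i = a_i n_i$, where $a_i$ is the $H$-smooth part of $N+i$ (its prime factors are all $\le H$) and $n_i$ is the $H$-rough part (its prime factors are all $>H$). A prime $p>H$ divides at most one element of the interval. Since the product \eqref{nhint} is powerful, any prime $p>H$ dividing some $N+i$ therefore divides that $N+i$ at least twice. Hence each $n_i$ is powerful. So the task is to find two indices $i<j$ for which both $a_i$ and $a_j$ are at most $H^{O(1)}$. We then take $a=a_i$, $n=n_i$, $b=a_j$, $m=n_j$, and $h=j-i$, which satisfies $1\le h\le H-1$. (If $H>1$ but the interval is so short that we may simply use the two endpoints, the same argument applies.)

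To bound the smooth parts, we use a Legendre-type computation on the product of the smooth parts:
\[
\prod_{i=1}^H a_i = \prod_{p\le H} p^{v_p((N+1)\cdots(N+H))}.
\]
For each $p\le H$ one has $v_p\big((N+1)\cdots(N+H)\big) \le v_p(H!) + \max_{i} v_p(N+i)$. This is the standard fact that, for each $k$, the multiples of $p^k$ in an interval of length $H$ number at most $\lfloor H/p^k\rfloor+1$, and the extra $+1$ terms occur only for $k\le \max_i v_p(N+i)$. Hence
\[
\prod_i a_i \le H!\,\prod_{p\le H} p^{\max_i v_p(N+i)}.
\]
This is the Erd\H{o}s-style argument used to prove Sylvester--Schur. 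For each prime $p\le H$, discard one index $i_p$ achieving $\max_i v_p(N+i)$. At most $\pi(H)$ indices are discarded. For every remaining $i$ we have $v_p(N+i)\le v_p(H!)$-ish in aggregate, and more precisely the product of $a_i$ over the remaining indices is at most $H!$. When $\pi(H)\le H-2$, which holds for $H\ge 4$ and can be checked by hand or by a slight variant for small $H$, at least two indices remain. Let $R$ be the remaining set of indices, so $|R|\ge H-\pi(H)\gg H$. Since $\prod_{i\in R} a_i\le H!\le H^H$, by averaging (Markov) at least two indices $i\in R$ satisfy $a_i \le H^{H/(|R|-1)}$. Because $|R|\gg H$ for large $H$, this gives $a_i\le H^{O(1)}$.

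The step needing care is ensuring that $|R|\ge 2$ with $|R|\gg H$ for all $H>1$. For small $H$, such as $H=2,3$, where $\pi(H)$ is not small compared to $H$, the bound $a,b\ll H^{O(1)}=O(1)$ must be handled separately. In that regime I would argue directly: for $H=2$ the two consecutive integers are coprime, so each is itself powerful, and we may take $a=b=1$. For $H=3$ the only possible common primes are $2$ and $3$ dividing at most a bounded power jointly. Alternatively, we can sidestep small $H$ by dropping only primes $p$ with $p^{\max v_p}$ large. Beyond this, the argument is routine, and we do not need \Cref{hvin}: the conclusion is stated in terms of $H$ alone.
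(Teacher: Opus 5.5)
Your argument is correct for $H=2$ and $H\ge 4$, but it differs from the paper's, and it leaves a genuine gap at $H=3$.

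\textbf{Comparison with the paper.} The paper never uses the full $H$-smooth part. It moves every prime power $p^k$ exactly dividing $N+h$ with $k\ge 2$, including those with $p\le H$, into the powerful factor. So $a_h$ is the squarefree product of the primes $p\le H$ that divide $N+h$ exactly once. Each such $p$ divides at most $H/p+1$ of the $a_h$, hence $\prod_h a_h\le \prod_{p\le H}p^{H/p+1}=H^{O(H)}$. Markov's inequality applied to $\log a_h$ over all $H$ indices then gives two indices with $a_h\ll H^{O(1)}$, uniformly for every $H\ge 2$.

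You instead keep the full smooth parts, so your $n,m$ are additionally $H$-rough. You control the unbounded valuations with Erd\H{o}s's Sylvester--Schur device. The inequality $\sum_{i\ne i_p}v_p(N+i)\le v_p(H!)$ does give $\prod_{i\in R}a_i\le H!$, using only elementary counting in place of Mertens' theorem. The cost is losing up to $\pi(H)$ indices, which forces you to treat small $H$ separately.

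\textbf{The gap at $H=3$.} There $\pi(3)=2$, and the discarded indices $i_2,i_3$ can be distinct, leaving $|R|=1$. Your argument cannot rule out a configuration such as $N+1=2^k u$, $N+2=3^j v$, $N+3=2w$, with $u,v,w$ powerful and coprime to $6$ and $k,j\ge 2$ arbitrarily large. The product is then powerful, yet two of your three $a_i$ are unbounded.

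Your remark that $2$ and $3$ divide ``at most a bounded power jointly'' concerns shared factors, so it does not address this. Your alternative of discarding only primes with $p^{\max_i v_p(N+i)}$ large does not help either: both such prime powers are large here, so you would still discard indices $1$ and $2$.

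\textbf{The fix.} Use the paper's normalization. The factors $2^k$ and $3^j$ with $k,j\ge 2$ are themselves powerful and can be absorbed into $n$. After that, every $a_i$ divides $6$, and any two indices work. Adopting the squarefree normalization from the start removes the need for any small-$H$ case analysis.
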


\begin{proof}  Every prime $p>H$ divides at most one of the $N+1,\dots,N+H$, and if it does, it must divide at least twice, since $\{N+1,\dots,N+H\}$ is very bad.  Thus every element $N+h$ of this interval factors as $N+h = a_h n_h$, where $n_h$ is powerful and $a_h$ is the product of some distinct primes less than or equal to $H$.  Each prime $p \leq H$ can only divide at most $H/p+1$ of the $a_h$, thus
    $$ \prod_{h=1}^H a_h \leq \prod_{p \leq H} p^{H/p + 1} = \exp\left( H \sum_{p \leq H} \frac{\log p}{p}\right) \ll H^{O(H)}$$
by the prime number theorem.  Since $H > 1$, we conclude from Markov's inequality (after taking logarithms) that we can find $1 \leq h_1 < h_2 \leq H$ such that $a_{h_1}, a_{h_2} \ll H^{O(1)}$.  The claim follows (by setting $a$, $b$, $h$, $n$, $m$ to be $a_1$, $a_2$, $h_2-h_1$, $n_{h_1}$, $n_{h_2}$ respectively).
\end{proof}

We can now prove \Cref{verybad-thm}.  By dyadic decomposition, it suffices to show that the union of the very bad intervals $\{N+1,\dots,N+H\}$ with $x/2 < N \leq x$ and $H > 1$ has cardinality
\begin{equation}\label{good}
    \ll x^{\frac{2}{5}+o(1)}.
\end{equation}
From \Cref{hvin} and \Cref{linrel}, each very bad interval is associated to a solution to the equation $an + h = bm$ with $a,b,h = x^{o(1)}$ and $n,m$ powerful with $an \ll x$.  By \Cref{powercount} (summing over the $x^{o(1)}$ possible choices of $(a,b,h)$), there are only $x^{\frac{2}{5}+o(1)}$ such solutions; and each pair contributes to at most $O(x^{o(1)})$ very bad intervals, each of which has length $O(x^{o(1)})$ by a further appeal to \Cref{hvin}. The claim follows.

\section{Controlling the \texorpdfstring{$F_3$}{F3} intervals}\label{f3-sec}

We now adapt the arguments of the previous section to prove \Cref{f3-thm}.  We first need a bound on the quantity $a$ appearing in \Cref{def-bad}(iii), which is implicit in \cite[p. 343]{erdos-graham} (and also appears in \cite[(10)]{luca}):

\begin{lemma}[Bound on $a$]\label{abound}  Let $\{N+1,\dots,N+H\}$ be an $F_3$ interval, so that the product \eqref{nhint} has the same squarefree component as $a!$ for some $1 \leq a < N$.  Then $H < N$ and $a \ll H \log N$.
\end{lemma}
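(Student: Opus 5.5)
Both claims come from Bertrand-type prime counting; the key fact is that if $s(P)=s(a!)$ for $P \coloneqq (N+1)\cdots(N+H)$, then every prime dividing $a!$ to an odd power must also divide $P$.

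For $H<N$, suppose instead $H \geq N$. By \Cref{prime-interval}(i) there is a prime $p$ with $(N+H)/2 < p \leq N+H$. Since $2p > N+H$, $p$ occurs among $N+1,\dots,N+H$ at most once. Since $p > (N+H)/2 \geq N$, it occurs exactly once, and to the first power because $p^2 > N+H$. So $p \,\|\, P$ and $p \mid s(P)$. But $a < N < p$, so $p \nmid a!$ and $p \nmid s(a!)$, contradicting $s(P)=s(a!)$. Hence $H<N$.

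For the bound on $a$, we may assume $a$ is large, since otherwise there is nothing to prove. Every prime $p$ with $a/2 < p \leq a$ divides $a!$ exactly once, so it divides $s(a!) = s(P)$ and hence divides $P$. Two cases arise.

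If $p > H$, then $p$ divides at most one of $N+1,\dots,N+H$. So the primes in $(\max(a/2,H),a]$ each divide some element of the interval, with no element hit by two different such primes counted twice wrongly. Each $N+h \leq 2N$ has at most $\log(2N)/\log(a/2)$ prime factors exceeding $a/2$. Therefore
$$\#\{p : \max(a/2,H) < p \leq a\} \leq H \frac{\log 2N}{\log(a/2)}.$$

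If $a \geq 4H$, the left side is $\gg a/\log a$ by the prime number theorem (a Chebyshev bound suffices). This gives $a/\log a \ll H \log N / \log a$, so $a \ll H\log N$. If instead $a < 4H$, then $a \ll H \leq H \log N$ trivially.

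The main subtlety is this final counting step. One must make sure the primes in $(a/2,a]$ genuinely force divisibility of $P$; this holds because the exponent $1$ in $a!$ is odd, and $a<N$ keeps things consistent. One then bounds the number of large prime factors per term via $\log(N+h)/\log(a/2)$, so that the $\log a$ factors cancel, and the case split at $a \asymp H$ handles everything else.
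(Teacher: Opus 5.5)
Your proof is correct and follows essentially the same route as the paper: Bertrand's postulate gives a prime $p\in((N+H)/2,N+H]$ that divides the product exactly once but does not divide $a!$, and every prime in $(a/2,a]$ must divide $(N+1)\cdots(N+H)$, which is then compared against the size of that product. The paper does the second step more directly via $\sum_{a/2<p\le a}\log p \le \sum_{h=1}^H \log(N+h) \ll H\log N$. This is equivalent to your count of large prime factors per term, and it shows that your restriction to $p>H$ and your case split at $a\asymp H$ are not needed.
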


\begin{proof} If $H \geq N$, then from \Cref{prime-interval}(i) there is a prime $(N+H)/2 < p \leq N+H$ that divides the product \eqref{nhint} exactly once and does not divide $a!$, contradicting the definition of an $F_3$ interval.  Thus we have $H < N$.

Every prime in $(a/2,a]$ divides $a!$ exactly once, and thus must divide $(N+1) \dots (N+H)$.  Taking logarithms, we conclude that
$$ \sum_{a/2 < p \leq a} \log p \leq \sum_{h=1}^H \log (N+h) \ll H \log N$$
and the claim follows from the prime number theorem.
\end{proof}

Next, we establish an analogue of \Cref{hvin}:

\begin{lemma}[$F_3$ intervals are short]\label{hf3}  Let $\{N+1,\dots,N+H\}$ be a type $F_3$ interval.  Then
\begin{equation}\label{HBound}
 H \leq \exp(\log^{2/3+o(1)} N)
\end{equation}
as $N \to \infty$.
\end{lemma}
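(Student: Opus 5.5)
The plan is to follow the proof of \Cref{hvin} closely; the only change is that primes in $(H,2H]$ may now divide the product to the first power. We first use \Cref{abound}, which gives $H < N$ and $a \ll H\log N$. Suppose for contradiction that $H > \exp(\log^{2/3+\eps} N)$.

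The key observation concerns primes $p$ with $p > a$. Such primes do not divide $a!$, so for them to match the squarefree part of $a!$ they must divide $(N+1)\cdots(N+H)$ an even number of times. For a prime $p > H$ this means $p$ divides at most one term of the interval, and if it divides one, it divides it at least twice. So the very bad argument works for primes $p > \max(a,H)$. Since $a \ll H\log N$ only, we cannot take $p \in (H,2H]$ directly.

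Instead we work with primes in a range $(P,2P]$, where $P = CH\log N$ for a large constant $C$. Then $P > a$ and $P > H$, and $P \le \exp(\log^{3/2-\eps'} \cdot)$ is fine for \Cref{vinogradov} provided $N = O(\exp(\log^{3/2-\eps} P))$. That condition holds because $\log P \ge \log H > \log^{2/3+\eps} N$.

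For $p$ in this range, the forbidden configuration is that some $N+h$ with $1\le h\le H$ is divisible by $p$ but not by $p^2$. In terms of fractional parts this reads $\{N/p\} \ge 1 - H/p$ together with $\{N/p^2\}$ not near $1$. Since $H/p \asymp 1/(C\log N)$ is now small rather than of size $0.1$, we choose $W$ supported in the region $\{t_1\} \in [1-H/P, 1)$, roughly, with $\{t_2\}\in[0,0.9)$. This $W$ has $C^3$ norm about $(P/H)^3 \ll \log^3 N$.

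\Cref{vinogradov} with $A$ large absorbs this polylog loss, since $\log^A P \ge \log^{A(2/3)} N$. The main term is then about $(H/P)\cdot P/\log P$ times the proportion of $s \in [N/2P, N/P]$ with $\{s^2/N\}$ in a fixed middle range. After the same changes of variables as in \Cref{hvin}, this is $\gg H/\log P$. That is much larger than the error $O(\log^3 N \cdot P/\log^A P)$, which gives the contradiction.

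Two inputs need checking. First, the quantity $s^2/N$ must vary by $o(1)$ on unit intervals; it varies by $O(1/P)$, which is fine. Second, we need $N/P^2 \gg 1$ so that the middle-range measure is comparable to $N/P^2$. This second point is the main obstacle. In \Cref{hvin} the bound $H \le \sqrt{2N}$ came from the largest prime factor, and here we have no such bound a priori.

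For the obstacle, first try a Sylvester--Schur argument. By \Cref{classical}(i) the product has a prime factor $p_0 > H$. If $p_0 > a$, it must divide a single term to at least the second power, and then $H < \sqrt{2N}$. Otherwise every prime exceeding $a$ divides the product to even order. In that case, dispose of the whole situation by the crude route of bounding $H$ via \Cref{prime-interval}(ii): a prime in $(N+H-cN^{0.525}, N+H]$ exceeds $a$ and divides exactly once, which forces $H \ll N^{0.525}$ anyway.

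Even so, the range $N/P^2 < 1$ may remain when $H\log N > \sqrt N$. In that case, use instead that the set $\{u\in[N/4P^2,N/P^2] : \{u\} \text{ in the middle range}\}$ just needs the unrescaled version of the argument. Alternatively, shrink $P$ to $\sqrt{N}/\log N$ while keeping $P > a$, using $H \ll N^{0.525}$ and the sharper bound $a \ll H\log N$. I expect tuning these ranges to be the only genuine difficulty.
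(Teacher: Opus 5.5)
Your argument for the regime $N/P^2 \gg 1$ is correct and is essentially the paper's first case. The paper takes $P = H\log^2 N$, so that $P > a$ by \Cref{abound} without tracking constants. When $P \le \sqrt{2N}$ it reruns the proof of \Cref{hvin} with $P$ in place of $H$. Your narrow window of width $\asymp H/P$ in $t_1$, with the resulting $C^3$ norm $\log^{O(1)} N$ absorbed by taking $A$ large in \Cref{vinogradov}, is exactly what that rerun needs.

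The gap is the complementary regime $P > \sqrt{2N}$. You flag it correctly, but none of your proposed fixes closes it:
\begin{itemize}
\item Sylvester--Schur only gives $H < \sqrt{2N}$, not $P < \sqrt{2N}$.
\item \Cref{prime-interval}(ii) only gives $H \ll N^{0.525}$, so $P$ can still be as large as $N^{0.525}\log N$.
\item The ``unrescaled'' version fails: for small $N/P^2$ the set $\{u \in [N/4P^2, N/P^2] : 0.02 \le \{u\} \le 0.88\}$ is empty.
\item You cannot shrink $P$ to $\sqrt{N}/\log N$ while keeping $P > a$. The only information is $a \ll H\log N$, and in this regime that can exceed $\sqrt{N}$.
\end{itemize}

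The missing observation is that in this regime the constraint becomes simpler, not harder. For $P < p \le 2P$ one has $p^2 > 2N \ge N+H$, so no term of the interval is divisible by $p^2$. Since $p > \max(a,H)$ forces even multiplicity, no term $N+h$ is divisible by $p$ at all. This is the purely one-dimensional condition $\{N/p\} \notin [1-H/p,1)$.

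So apply \Cref{vinogradov} with $W$ depending only on the first coordinate, supported in a window of width $\asymp H/P$ below $1$, with $C^3$ norm $\log^{O(1)} N$. After the substitution $u = N/t$, the main term reduces (up to the Jacobian factor) to $\int_{N/2P}^{N/P} W(u)\,du \gg \frac{N}{P}\cdot\frac{H}{P}$, since $W$ has integral $\gg H/P$ on every unit interval. This requires only that $[N/2P,N/P]$ contain many unit intervals, i.e.\ $N/P \ge 10$, rather than $N/P^2 \gg 1$.

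That condition is exactly what the Baker--Harman--Pintz bound $H \ll N^{0.525}$ supplies; you already derived it from the interval containing no prime. With this second case added, the split $P \le \sqrt{2N}$ versus $P > \sqrt{2N}$ covers everything and your proof goes through.
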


\begin{proof}  Let $\eps > 0$.  It suffices to show that
$$ H \leq \exp(\log^{2/3+\eps} N)$$
for sufficiently large $N$ depending on $\eps$.  Suppose for contradiction that
$$ H > \exp(\log^{2/3+\eps} N).$$
Since $H \leq N$, we see from \Cref{def-bad}(iii) that the interval $\{N+1,\dots,N+H\}$ cannot contain a prime, and thus by \Cref{prime-interval}(ii) one has
\begin{equation}\label{A3}
        H \ll N^{0.525}.
\end{equation}

Let $P \coloneqq H \log^2 N$, then by \Cref{abound} every prime $p$ in the range $P < p \leq 2P$ does not divide $a!$, and thus divides $(N+1) \dots (N+H)$ an even number of times.  We have $p > P > H$, thus $p$ can only divide one of the $N+1,\dots,N+H$, and if so it is divisible by $p^2$.
 If $P \leq \sqrt{2N}$, then we may argue exactly as in the proof of \Cref{hvin} (but with $P$ now playing the role of $H$) to contradict \Cref{vinogradov}.  By \eqref{A3}, we may now assume that $P$ lies in the range
 \begin{equation}\label{P-big}
    \sqrt{2N} < P \ll N^{0.525} \log^2 N.
 \end{equation}
Now the none of the $N+1, \dots, N+H$ can be divisible by $p^2$ for $P < p \leq 2P$, and so cannot be divisible by $p$ either.  Thus if we let $W \colon \R \to \R$ be a $1$-periodic function supported on the set
$$ \left\{ u \in \R : 1-\frac{1}{10\log^2 N} \leq \left\{ u \right\} < 1 \right\}$$
which equals $1$ on
$$ \left\{ u \in \R : 1-\frac{1}{20\log^2 N} \leq \left\{ u \right\} < 1- \frac{1}{30 \log^2 N} \right\}$$
and has a $C^3$ norm of $O(\log^6 N)$, then
$$ \sum_{P \leq p < 2P} W\left(\frac{N}{p}\right) = 0.$$
Applying \Cref{vinogradov} (and \eqref{P-big}) we conclude
$$ \int_P^{2P} W\left(\frac{N}{t}\right) \frac{dt}{\log t} \ll \frac{P}{\log^{10} N}$$
(say).  Applying the change of variables $u = \frac{N}{t}$, we conclude
$$  \int_{N/2P}^{N/P} W\left(u\right) du \ll \frac{N}{P \log^{9} N}.$$
From \eqref{P-big} we have $N/P \geq 10$ (say).  Since $W$ has integral at least $\frac{1}{60 \log^2 N}$ on every unit interval, we have
$$  \int_{N/2P}^{N/P} W\left(u\right) du \gg \frac{N}{P \log^2 N},$$
giving the required contradiction.
\end{proof}

Now we establish an analogue of \Cref{linrel}:

\begin{lemma}[$F_3$ intervals create a linear relation]\label{linrel-f3}  Let $\{N+1,\dots,N+H\}$ be a $F_3$ interval of length $H > 1$.  Then there is a solution to the equation
    $$ a_1 n_1^2 + h = a_2 n_2^2$$
    with $a_1 n_1^2, a_2 n_2^2 \in \{ N+1, \dots, N+H\}$, $1 \leq h \leq H$, and $1 \leq a_1, a_2 \ll e^{O(P/H)} H^{O(1)}$, and $a_1,a_2$ $P$-smooth, where $P \coloneqq \max(a,H)$.
\end{lemma}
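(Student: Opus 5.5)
We follow the proof of \Cref{linrel}, replacing ``powerful'' by ``square times a $P$-smooth squarefree factor''. The input is the $F_3$ condition: $(N+1)\dots(N+H)$ has the same squarefree component as $a!$. Every prime $p > P = \max(a,H)$ therefore has $p \nmid a!$, so $p$ divides the product to an even power. Since $p > H$, such a $p$ divides at most one of the $N+1,\dots,N+H$, hence divides that element to an even power. So each element factors as
$$N+h = a_h n_h^2,$$
where $a_h$ is squarefree and composed only of primes $p \leq P$. In particular each $a_h$ is $P$-smooth.

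Next we bound $\prod_{h=1}^H a_h$. Each prime $p \leq P$ divides at most $H/p+1$ of the elements $N+h$, and $a_h$ is squarefree, so $p$ appears in the product with multiplicity at most $H/p+1$. Thus
$$\prod_{h=1}^H a_h \leq \prod_{p \leq P} p^{H/p+1} = \exp\Bigl(H\sum_{p\leq P}\frac{\log p}{p} + \sum_{p \leq P}\log p\Bigr).$$
By Mertens' theorem the first sum in the exponent is $O(\log P)$, and by the prime number theorem the second is $O(P)$. Hence the product is at most $\exp(O(H\log P + P))$.

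To pass from $\log P$ to the shape in the statement, we use $\log P \leq \log H + P/H$. This holds because if $P = a > H$ then $\log(P/H) \leq P/H$, and if $P = H$ it is trivial. Therefore
$$\prod_{h=1}^H a_h \leq \exp\bigl(O(H\log H + P)\bigr) = \bigl(e^{O(P/H)} H^{O(1)}\bigr)^H.$$

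Taking logarithms, the average of $\log a_h$ over $h$ is $O(\log H + P/H)$. Since $H > 1$, Markov's inequality gives at least two indices $h_1 < h_2$ with $\log a_{h_i} \ll \log H + P/H$, that is, $a_{h_i} \ll e^{O(P/H)} H^{O(1)}$. Concretely, fewer than half the indices can exceed twice the average, and half of $H \geq 2$ indices is at least one, which leaves at least two good indices.

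Set $a_i = a_{h_i}$, $n_i = n_{h_i}$ and $h = h_2 - h_1 \in [1,H]$. Then $a_1 n_1^2 + h = a_2 n_2^2$, and both $a_1 n_1^2$ and $a_2 n_2^2$ lie in $\{N+1,\dots,N+H\}$, as required. The only point needing care is the bookkeeping of the $\sum_{p\leq P}\log p \ll P$ term, which is the source of the $e^{O(P/H)}$ factor; otherwise the argument is a routine repeat of \Cref{linrel}.
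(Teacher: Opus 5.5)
Your proof is correct and matches the paper's argument: you write $N+h = a_h n_h^2$ with $a_h$ squarefree and $P$-smooth, bound $\prod_h a_h$ by $\exp(O(H\log H + P))$, and apply Markov's inequality to find two good indices. The only difference is bookkeeping: the paper bounds the product directly by $\prod_{p\le H}p^{H/p}\cdot\prod_{p\le P}p$, while you use the cruder $\prod_{p\le P}p^{H/p+1}$ and then absorb the resulting $H\log P$ term via $\log P \le \log H + P/H$, which yields the same bound.
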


\begin{proof}  Every prime larger than $P$ is coprime to $a!$ and divides at most one of the $N+1,\dots,N+H$, and so must divide each such number an even number of times by \Cref{def-bad}(iii).  Thus we can write $N+h = a_h n_h^2$ where $a_h$ is the product of distinct primes up to $P$, in particular the $a_h$ are $P$-smooth and hence $O(H \log N)$-smooth by \Cref{abound}. Arguing as in the proof of \Cref{linrel}, we conclude that
$$ \prod_{h=1}^H a_h \leq \prod_{p \leq H} p^{H/p} \cdot \prod_{p \leq P} p = \exp(O( H \log H + P ))$$
by the prime number theorem. Thus by Markov's inequality we can find $1 \leq h_1 < h_2 \leq H$ such that $a_{h_1}, a_{h_2} \ll \exp(O(\log H + P/H))$, giving the claim (by setting $a_1$, $a_2$, $h$, $n_1$, $n_2$ to be $a_{h_1}$, $a_{h_2}$, $h_2-h_1$, $n_{h_1}$, $n_{h_2}$ respectively).
\end{proof}

Now suppose that $\{N+1,\dots,N+H\}$ is an $F_3$ interval of length $H > 1$ for which $x/2 < N+H \leq x$, which by \Cref{abound} implies that $N \asymp x$ and $a, H = x^{o(1)}$.  It will suffice to show that the number of possible choices for $N+H$ is $O(x^{1/2+o(1)})$.  By paying factors of $x^{o(1)}$, we may assume that $a$ and $H$ are fixed.

We split into several cases.  First consider the case where either $a \leq \eps H \log N$ for some sufficiently small absolute constant $\eps > 0$, or $H = O(1)$.  Let $a_1, a_2, h, n_1, n_2$ be as in the above lemma, then either $a_1,a_2 = x^{O(\eps)}$, or else $a_1,a_2$ are $O(\log x)$-smooth.  In either case there are $O(x^{O(\eps)})$ choices for $a_1,a_2, h$ (where in the latter case we use \Cref{smoot}(ii)), and once one fixes these parameters, there are $O(x^{1/4})$ choices for $n_1, n_2$ thanks to \Cref{squarecount}.  The quantity $N+H$ lies within $H = x^{o(1)}$ of $a_1 n_1^2$, hence each choice of $a_1, n_1$ generates at most $x^{o(1)}$ possibilities for $N+H$.  Thus the total number of $N+H$ of this form is $O( x^{1/4 + O(\eps) + o(1)} )$, which is acceptable if $\eps$ is small enough.  Thus, by \Cref{abound}, we may now restrict to the case
\begin{equation}\label{ahn}
    a \asymp H \log N \asymp H \log x
\end{equation}
and where $H$ is larger than any given absolute constant; this is the regime where our estimates are weakest.

For every prime $p$ with $a/2 < p \leq a$, $p$ divides $a!$ exactly once, and hence must divide one of $N+1,\dots,N+H$.  In particular, this restricts $N$ to at most $H$ residue classes modulo $p$.  Applying \Cref{large-sieve},
we see that the number of choices for $N$ (or $N+H$) is at most
$$ \ll \frac{x}{\left( \frac{1}{k} \sum_{a/2 < p \leq a}^{[-k]} \frac{p-H}{H}\right)^k}$$
whenever $a^k \leq \sqrt{x}$.  Using the prime number theorem and \eqref{ahn}, we thus obtain the bound
\begin{equation}\label{cak}
 \ll \frac{x}{\left( c \frac{a}{\log a} \frac{\log x}{k} \right)^k}
\end{equation}
provided that
\begin{equation}\label{kca}
k \leq c \frac{a}{\log a}
\end{equation}
and $c>0$ is a sufficiently small constant.  We choose $k$ to be the largest integer such that $a^k \leq \sqrt{x}$, then $k \log a \asymp \log x$ and so the condition \eqref{kca} follows from \eqref{ahn} and the fact that $H$ is large.  Since $\log x \ll a = x^{o(1)}$, we have $a^k = x^{1/2-o(1)}$ and $k \ll \frac{\log x}{\log_2 x} = o(\log x)$, and the bound \eqref{cak} then simplifies to $x^{1/2+o(1)}$, giving the claim.  This proves \Cref{f3-thm}.

We can now prove \Cref{thm-f3-eq}.  From \eqref{f31} we know that the number of solutions is $\gg x^{1/2}$; it thus remains to establish the upper bound $\ll x^{1/2+o(1)}$.
    If $a_1,a_2,a_3,m$ solve \eqref{f3-eq} with $a_1 < a_2 < a_3 \leq x$, then $\{a_2+1,\dots,a_3\}$ is a type $F_3$ interval by \Cref{def-bad}(iii), and hence by \Cref{f3-thm} there are $O(x^{1/2+o(1)})$ choices for $a_3$.  From \Cref{hf3} we have $a_3 - a_2 \ll x^{o(1)}$, so once $a_3$ is fixed there are only $x^{o(1)}$ choices for $a_2$.  Once $a_2, a_3$ are fixed, there are at most two choices for $a_1$, since by \Cref{classical}(ii) the squarefree part $s(a_1!)$ of $a_1!$ only repeats when $a_1$ is a perfect square.  Since $m$ is determined by $a_1,a_2,a_3$, the claim follows.

\begin{remark}\label{rem-cover} The same argument shows that the elements of $[1,x]$ that lie in at least one $F_3$ interval has cardinality $x^{1/2+o(1)}$.
\end{remark}

\section{Character sums over primes}

Given a Dirichlet character $\chi$ and a scale $Z \geq 2$, let $s_Z(\chi)$ denote the normalized character sum over primes
\begin{equation}\label{sz-def}
 s_Z(\chi) \coloneqq \frac{\sum_{Z \leq p < 2Z} \chi(p)}{\sum_{Z \leq p < 2Z} 1}.
\end{equation}
Typically we will be interested in the regime where the conductor of $\chi$ is of polynomial size $O(Z^{O(1)})$.  Trivially one has
$$ |s_Z(\chi)| \leq 1,$$
but we will be interested in obtaining a power savings in this bound, similar to \Cref{burgess-bound}.  To this end, call a non-principal Dirichlet character $\chi$ \emph{exceptional} (with respect to the scale $Z$) if
\begin{equation}\label{excep}
    |s_Z(\chi)| \geq Z^{-0.008},
\end{equation}
and \emph{unexceptional} otherwise.
We chose to use explicit numerical exponents here in order to reduce the number of small unspecified parameters such as $\eps$ in our arguments.  Roughly speaking, exceptional characters of conductor dividing $q$ are the main obstruction to random products of primes, such as $\mathbf{p}_0^2 \mathbf{p}_1 \dots \mathbf{p}_{1000}$, equidistributing with respect to the primitive residue classes of a given modulus $q$.

Due to the possible presence of Siegel zeroes, one cannot expect to eliminate exceptional characters entirely; however, we can obtain good upper bounds on the number of such characters:

\begin{lemma}[Few exceptional characters]\label{primesum}  Let $Z$ be a large quantity, and let $q_1 \leq Z^{3.09}$ be cubefree.  Let $\chi_1,\dots,\chi_J$ be exceptional primitive characters, with each $\chi_j$ of period $q_1 q_{2,j}$ for some cubefree $q_{2,j} \leq (Z^{3.09}/q_1)^{1/2}$ coprime to $q_1$.  Then
\begin{equation}\label{jp} \sum_{j=1}^J |s_Z(\chi_j)|^2 \ll 1.
\end{equation}
In particular, from \eqref{excep} we have
\begin{equation}\label{J-bound} J \ll Z^{0.016}. \end{equation}
More generally, for any $Z^{-0.008} \leq \lambda \leq 1$, one has $|s_Z(\chi_j)| \leq \lambda$ for all but $O(\lambda^{-2})$ of the $\chi_j$.
\end{lemma}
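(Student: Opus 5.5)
We argue by duality (the Bombieri--Hal\'asz--Montgomery inequality), as foreshadowed in the introduction. Write $\mathcal{P} := \{Z \leq p < 2Z\}$ and $\pi := \#\mathcal{P} \asymp Z/\log Z$. Choose unimodular coefficients $c_j$ with $c_j s_Z(\chi_j) = |s_Z(\chi_j)|$. Then
$$ \sum_{j} |s_Z(\chi_j)|^2 = \frac{1}{\pi}\sum_{p \in \mathcal{P}} \sum_j c_j |s_Z(\chi_j)| \chi_j(p).$$
Cauchy--Schwarz in $p$ therefore gives
$$ S^2 \leq \frac{1}{\pi^2} \cdot \pi \cdot \sum_{j,j'} |s_Z(\chi_j)||s_Z(\chi_{j'})| \Bigl|\sum_{p\in\mathcal{P}} \chi_j\overline{\chi_{j'}}(p)\Bigr|,$$
where $S := \sum_j |s_Z(\chi_j)|^2$.

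Summing over primes directly would lose logarithms, since the off-diagonal terms are controlled by Burgess only over integers. I will therefore first majorize $1_{\mathcal{P}}$ by a sieve weight. Take $\nu(n) = (\sum_{d|n, d \leq D} \lambda_d)^2$, a Selberg upper-bound sieve supported on $[Z,2Z)$ with level $D = Z^{\delta}$ for small $\delta$. Then $\nu \geq 1_{\mathcal P}$ and $\sum_n \nu(n) \ll Z/\log Z \asymp \pi$. Applying Cauchy--Schwarz with $\nu$ in place of $1_{\mathcal P}$ makes the diagonal terms $j=j'$ contribute
$$\frac{1}{\pi}\sum_n \nu(n) \sum_j |s_Z(\chi_j)|^2 \ll S.$$

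For $j \neq j'$, I need to bound $\sum_n \nu(n) \psi(n)$ where $\psi = \chi_j \overline{\chi_{j'}}$. Expanding $\nu$ reduces this to sums $\sum_{d_1,d_2 \le D} \lambda_{d_1}\lambda_{d_2} \sum_{m \in [Z/[d_1,d_2], 2Z/[d_1,d_2])} \psi([d_1,d_2] m)$, which I will bound by Burgess. This is where the hypotheses on conductors matter. Since $\chi_j$ and $\chi_{j'}$ are distinct primitive characters of periods $q_1q_{2,j}$ and $q_1 q_{2,j'}$, the character $\psi$ is non-principal, and it is induced by a primitive character of conductor dividing $q_1 q_{2,j} q_{2,j'} \le q_1 \cdot Z^{3.09}/q_1 = Z^{3.09}$. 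The conductor is also cubefree, because the $q_{2,\cdot}$ are cubefree and coprime to $q_1$. The point to check is that it is not principal even though the characters share the $q_1$ part: if it were, $\chi_j$ and $\chi_{j'}$ would be induced by the same primitive character, contradicting distinctness.

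Burgess for cubefree moduli with $r=3$ gives a saving $\ll N^{1-1/r} q^{(r+1)/(4r^2)+o(1)} = N^{2/3}q^{1/9+o(1)}$ on intervals of length $N$. With $q \le Z^{3.09}$ and $N \approx Z^{1-2\delta}$, the saving is $N \cdot Z^{-1/3+0.3433+\dots}$, which needs checking. Here $3.09/9 \approx 0.3433 > 1/3$, so $r=3$ just fails. I will instead use larger $r$: at $r=4$ the exponent is $5/64 \cdot 3.09 \approx 0.2414 < 1/4$, leaving a power saving of about $Z^{-0.0086}$. Burgess with $r \geq 4$ only needs cubefree moduli (using Weil/the Burgess cubefree extension), so the total off-diagonal bound per pair is $\ll Z^{1-0.0086+O(\delta)}$, divided by $\pi$, giving $Z^{-0.0086+O(\delta)+o(1)}$. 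This numerology is the main obstacle, and it is presumably why the constants $3.09$ and $0.008$ were chosen.

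Putting this together, and using $\sum_{j,j'}|s_j||s_{j'}| \le J S$,
$$S^2 \ll S + Z^{-0.0086+O(\delta)} J S,$$
so $S \ll 1 + Z^{-0.0086+O(\delta)} J$. Exceptionality gives $J \le Z^{0.016} S$, hence $S \ll 1 + Z^{0.0074+O(\delta)} S$. This does not close as stated, so I will run the argument with the exceptional threshold built in rather than $J$ crudely. Restrict to the subfamily with $|s_Z(\chi_j)| \ge \lambda$ for $\lambda \ge Z^{-0.008}$, and use the cruder form $\sum_{j\ne j'} |s_j||s_{j'}| |\cdot| \le (\sum_j |s_j|)^2 \max$. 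This gives $S^2 \ll S + Z^{-0.0086+O(\delta)} (\sum_j |s_j|)^2$. Since $\sum_j |s_j| \le J^{1/2} S^{1/2}$ and $J \le \lambda^{-2} S$, we get $(\sum_j |s_j|)^2 \le \lambda^{-2}S^2$, hence $S^2 \ll S + Z^{-0.0086+0.016+O(\delta)} S^2$. That still fails, which means the off-diagonal saving must be about $Z^{-0.016-}$. So I will have to push Burgess to the optimal $r$. As $r\to\infty$ the saving is $q^{1/(4r)}$ versus $N^{1/r}$ with $q \approx N^{3.09}$, i.e.\ $N^{(1-0.7725)/r}$, which is best at small $r$. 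Alternatively I could apply Cauchy--Schwarz to the products $p_1p_2$, i.e.\ use $\nu * \nu$ on $[Z^2,4Z^2)$, which halves the effective conductor exponent to $1.545$ and gives the required $Z^{-0.016-}$ saving. I would try the $p_1p_2$ version first. It yields $S \ll 1$, and then \eqref{J-bound} and the $\lambda$-statement follow by Chebyshev: $\#\{j : |s_Z(\chi_j)| > \lambda\} \le \lambda^{-2} S$.
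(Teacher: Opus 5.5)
There is a genuine gap, and it comes from your Burgess numerology. Your opening strategy is the paper's: duality (Bombieri--Hal\'asz--Montgomery) with a small-level sieve majorant $\nu$, so the diagonal costs only $O(Z/\log Z)$, plus Burgess for the off-diagonal sums of $\chi_j\overline{\chi_{j'}}$, then absorption. Those characters are non-principal with cubefree modulus at most $q_1q_{2,j}q_{2,j'}\le Z^{3.09}$. Your inequality $S \ll 1 + Z^{-\eta+O(\delta)}J$, together with $J\le Z^{0.016}S$, is a valid way to close; it replaces the paper's bootstrap on $J\le CZ^{0.016}$. It only needs the Burgess saving $\eta$ to exceed $0.016$.

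The Burgess factor is $q^{(r+1)/4r^2}$, not $q^{1/4r}$. So for $q\le N^{3.09}$ the saving is $\frac{1}{r}-\frac{3.09(r+1)}{4r^2}$. This is negative for $r\le 3$, about $0.0086$ at $r=4$, $0.0146$ at $r=5$ and $0.0165$ at $r=6$. It peaks at about $0.0167$ at $r=7$, which is the paper's choice, giving $H^{-0.0163}$ for cubefree $q\ll H^{3.1}$. With $r=7$ your first argument gives $S\ll 1+Z^{-0.0007+O(\delta)}S$, hence $S\ll 1$, and \eqref{J-bound} and the $\lambda$-statement follow by Chebyshev as you say. The constants $3.09$ and $0.008$ are tuned for exactly this margin. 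Because you concluded that larger $r$ is worse, you abandoned an argument that works.

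The replacement you commit to does not prove the lemma. Since $\sum_{p_1,p_2}\chi(p_1p_2)=(\pi s_Z(\chi))^2$, duality over products $p_1p_2$ controls $\sum_j|s_Z(\chi_j)|^4$, not $S$. At best it gives $J\ll Z^{0.032}$ and $O(\lambda^{-4})$ exceptions, not $S\ll 1$, $J\ll Z^{0.016}$ and $O(\lambda^{-2})$ exceptions.

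Nor does the majorant $\nu*\nu$ halve the conductor exponent. The off-diagonal terms factor as $\bigl(\sum_n \nu(n)\chi_j\overline{\chi_{j'}}(n)\bigr)^2$, so against the squared threshold $Z^{-0.032}$ you again need a single-variable saving above $0.016$.

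To really apply Burgess at scale $Z^2$ you would need a one-variable weight on $[Z^2,4Z^2)$ built from divisors $d\le D=Z^{O(\delta)}$. Any such majorant takes the same value $\ge 1$ on every $D$-rough integer, because the products $p_1p_2$ are themselves $D$-rough. Its mass is therefore $\gg Z^2/\log Z$, a factor $\log Z$ more than the number of products $p_1p_2$, so the diagonal loses that logarithm.

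The fix is to keep your first argument and take $r=7$ (any of $r=6,7,8$ works).
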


In order to prove this lemma, we recall some standard facts.  We begin with a version of the Burgess bound (with explicit numerical exponents):

\begin{lemma}[Burgess bound]\label{burgess-bound}  Let $H$ be a large quantity, and let $q$ be cube-free and obey the bound $q \ll H^{3.1}$.  Then for any non-principal Dirichlet character $\chi$ of period $q$, one has
$$ \sum_{n \leq H} \chi(n) \ll H^{1-0.0163}.$$
\end{lemma}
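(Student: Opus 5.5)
This is the classical Burgess bound, made explicit in the exponents. I would follow the standard route via the Weil bound for complete character sums, in the form valid for cube-free moduli. Recall the general Burgess estimate: for $\chi$ non-principal of cube-free period $q$ and any integer $r \geq 1$,
$$ \sum_{n \leq H} \chi(n) \ll_{r,\eps} H^{1-\frac{1}{r}} q^{\frac{r+1}{4r^2}+\eps}. $$
For $r \geq 2$ this uses cube-freeness. For $r=1$ it is simply Pólya--Vinogradov, $q^{1/2}\log q$.

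The proof of that estimate goes as follows.
\begin{itemize}
\item \emph{Shifting trick.} Replace $\sum_{n\le H}\chi(n)$ by an average over shifts $n+ab$, with $a \leq A$ and $b \leq B$, $AB \leq H$. This bounds the sum by $(AB)^{-1}\sum_{x \bmod q} \nu(x) |\sum_{b\le B}\chi(x+b)|$ plus an error $O(AB)$. Here $\nu(x)$ counts representations $x \equiv \bar a n$.
\item \emph{Hölder.} Apply Hölder's inequality with exponents coming from $\sum\nu$, $\sum \nu^2$ and the $2r$-th moment $\sum_x |\sum_{b\le B}\chi(x+b)|^{2r}$.
\item \emph{Moment estimate.} For cube-free $q$, the $2r$-th moment is $\ll_r q^{1+\eps}(B^r + B^{2r}q^{-1/2})$. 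This follows from Weil's bound, using the Chinese remainder theorem to handle prime and prime-squared components.
\item \emph{Energy bound.} Use $\sum\nu^2 \ll (AH/q + 1)AH q^{\eps}$.
\end{itemize}
Optimizing in $A,B$ gives the estimate above. I would cite this (e.g.\ Iwaniec--Kowalski, Theorem 12.6) rather than reprove it.

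It then remains to pick $r$. With $q \ll H^{3.1}$ we need
$$ \frac{1}{r} - 3.1\cdot\frac{r+1}{4r^2} - 3.1\eps \geq 0.0163. $$
\begin{itemize}
\item $r=2$: $0.5 - 3.1\cdot 3/16 = 0.5-0.58125<0$, which fails.
\item $r=3$: $1/3 - 3.1\cdot 4/36 = 0.3333-0.3444<0$, which fails.
\item $r=4$: $0.25 - 3.1\cdot 5/64 = 0.25 - 0.2422 = 0.0078$, which is too small.
\item $r=5$: $0.2 - 3.1\cdot 6/100 = 0.2-0.186 = 0.014$, which is also short of $0.0163$.
\item $r=6$: $1/6 - 3.1\cdot 7/144 = 0.16667-0.15069=0.01597$, still short.
\item $r=7$: $1/7 - 3.1\cdot 8/196 = 0.142857-0.126531 = 0.016327$.
\end{itemize}
So $r=7$ just exceeds $0.0163$, and taking $\eps$ sufficiently small (say $3.1\eps<10^{-5}$) gives the claimed saving $H^{-0.0163}$. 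The implied constants depend only on the fixed choices $r=7$ and $\eps$, so they are absolute.

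The main obstacle is confirming that the cited form of Burgess genuinely holds for all $r$ when $q$ is merely cube-free. If instead the reference only gives $r\le 3$ for general cube-free $q$, then the numerology above fails, since $r \le 3$ gives no saving at $q \approx H^{3.1}$. I would therefore check carefully that the moment estimate in the third step is available for all $r$. For $q$ cube-free the complete sums factor over $p \| q$ and $p^2 \| q$. The prime components use Weil. The $p^2$ components are handled by the elementary evaluation of character sums modulo $p^2$ via the $p$-adic logarithm. This is exactly the classical range of validity, and is why the lemma assumes $q$ cube-free.
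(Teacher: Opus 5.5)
Your proposal is correct and is essentially the paper's proof: the paper likewise quotes Burgess's estimate $H^{1-1/r}q^{(r+1)/4r^2+\varepsilon'}$, which holds for every $r$ when $q$ is cube-free, and takes $r=7$ with $\varepsilon'$ small; your computation $1/7-3.1\cdot 8/196=0.8/49\approx 0.016327>0.0163$ is exactly the ``brief numerical calculation'' the paper leaves implicit. Your sketch of Burgess's argument is extra, and the only point that neither you nor the paper spells out is the routine reduction from a non-principal character of period $q$ to its inducing primitive character (whose conductor divides $q$ and so is still cube-free), which costs only a factor $\tau(q)=q^{o(1)}$ that is absorbed into $q^{\varepsilon'}$.
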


As is well known, the Burgess bounds offer power savings for $q$ as large as $H^{4-\eps}$, but for our purposes any exponent larger than $3$ will suffice.

\begin{proof}  Applying the main results of \cite{burg1}, \cite{burg2}, \cite{burg3}, we have
$$ \sum_{n \leq H} \chi(n) \ll_{r,\eps'} H^{1-1/r} q^{(r+1)/4r^2 + \eps'}$$
for any $r,\eps'>0$.  Taking $r=7$ and $\eps'$ sufficiently small, we obtain the claim after a brief numerical calculation.
\end{proof}

Next, we recall the Bombieri--Hal\'asz--Montgomery inequality \cite{bombieri}:

\begin{lemma}[Bombieri--Hal\'asz--Montgomery inequality]\label{bombieri-halasz-montgomery} Let $\xi, \phi_1,\dots,\phi_J$ be vectors in a complex Hilbert space.  Then
$$ \sum_{j=1}^J|\langle \xi, \phi_j \rangle|^2 \leq \| \xi \|^2 \sup_{1 \leq j \leq J} \sum_{j'=1}^J |\langle \phi_j, \phi_{j'}\rangle|.
$$
\end{lemma}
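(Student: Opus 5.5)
The plan is the standard duality argument: bound $S \coloneqq \sum_{j=1}^J |\langle \xi,\phi_j\rangle|^2$ by Cauchy--Schwarz against a single cleverly chosen vector, and then control the norm of that vector with Schur's test. Throughout, the inner product is linear in the first slot and conjugate-linear in the second; the other convention only swaps some conjugates. Write
$$M \coloneqq \sup_{1 \leq j \leq J} \sum_{j'=1}^J |\langle \phi_j,\phi_{j'}\rangle|.$$
If $S=0$ there is nothing to prove, so assume $S>0$.

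\emph{Step 1 (duality).} Set $d_j \coloneqq \langle \xi,\phi_j\rangle$, so that $|d_j|^2 = d_j\overline{d_j}$. By conjugate-linearity in the second slot,
$$ S = \sum_{j} \overline{d_j}\,\langle \xi,\phi_j\rangle = \Big\langle \xi, \sum_j d_j \phi_j \Big\rangle .$$
Cauchy--Schwarz then gives $S \leq \|\xi\| \, \big\|\sum_j d_j\phi_j\big\|$.

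\emph{Step 2 (Schur test).} Expand the square of the norm:
$$\Big\|\sum_j d_j\phi_j\Big\|^2 = \sum_{j,j'} d_j \overline{d_{j'}}\,\langle \phi_j,\phi_{j'}\rangle \leq \sum_{j,j'} |d_j||d_{j'}|\,|\langle \phi_j,\phi_{j'}\rangle| .$$
Apply $|d_j||d_{j'}| \leq \tfrac12(|d_j|^2+|d_{j'}|^2)$. Because $|\langle\phi_j,\phi_{j'}\rangle| = |\langle\phi_{j'},\phi_j\rangle|$, the two resulting halves are equal after relabelling, so the whole sum is at most
$$\sum_j |d_j|^2 \sum_{j'} |\langle\phi_j,\phi_{j'}\rangle| \leq M \sum_j |d_j|^2 = MS.$$

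\emph{Step 3 (conclusion).} Combining the two steps gives $S \leq \|\xi\|\sqrt{MS}$. Dividing by $\sqrt{S}>0$ and squaring yields $S \leq \|\xi\|^2 M$, which is the claimed inequality.

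I do not expect any genuine obstacle. The only points needing care are choosing the coefficients $d_j$ with the correct conjugation for the inner-product convention, and using the symmetry of $|\langle\phi_j,\phi_{j'}\rangle|$ so that the row-sum bound $M$ controls both halves of the AM--GM split.
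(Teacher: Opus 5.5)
Your proof is correct. It is the standard duality argument: Cauchy--Schwarz against $\sum_j \langle \xi,\phi_j\rangle \phi_j$, followed by the Schur-type bound $|d_j||d_{j'}| \leq \frac{1}{2}(|d_j|^2+|d_{j'}|^2)$. The paper does not prove this lemma itself but defers to \cite{harcos} for short proofs, and your argument is the usual one. It also uses only Cauchy--Schwarz and the expansion of $\|\cdot\|^2$, so it applies verbatim to the positive semi-definite inner product weighted by $\nu$ in the proof of \Cref{primesum}, with no need to quotient out null vectors.
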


We refer the reader to \cite{harcos} for some short proofs of this inequality.

Finally, we recall a version of the fundamental lemma of sieve theory:

\begin{lemma}[Fundamental lemma of sieve theory]\label{fund-lemma}  Let $R > 1$ be large.  Then there exist weights $\lambda_d \in \{-1,0,1\}$ that are only non-vanishing when $d$ is a squarefree number less than or equal to $R$, with the sieve
$$ \nu(n) \coloneqq \sum_{d|n} \lambda_d$$
non-negative, with $\lambda_1 = 1$ and
$$ \sum_{d} \frac{\lambda_d}{d} \ll \frac{1}{\log R}.$$
In particular, this implies that
\begin{equation}\label{nx}
     \sum_{n \leq X} \nu(n) = \sum_d \lambda_d \left(\frac{X}{d}+O(1)\right) \ll \frac{X}{\log R}
\end{equation}
for any $X \gg R^2$ (say).
\end{lemma}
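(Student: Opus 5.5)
The plan is to take $\lambda_d$ to be the upper-bound weights of a combinatorial sieve (Rosser's $\beta$-sieve, or Brun's sieve in its fundamental-lemma form) for sifting out the primes below a threshold $w = R^{1/s}$, where $s$ is a large fixed absolute constant. The weights will be
$$\lambda_d = \mu(d)\, 1_{d \in \mathcal{D}^+},$$
where $\mathcal{D}^+$ is the usual upper-bound support set of the sieve. This is the set of squarefree $d \mid P(w) := \prod_{p<w} p$ with prime factors $p_1 > p_2 > \dots > p_r$ such that $p_1 \cdots p_{m-1} p_m^{\beta+1} < R$ for all odd $m$, with $\beta$ a fixed constant (for instance $\beta=10$). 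Then $\lambda_d \in \{-1,0,1\}$, each $d$ with $\lambda_d \neq 0$ is squarefree with $d \leq R$, and $\lambda_1 = 1$ since $1 \in \mathcal{D}^+$.

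First, the non-negativity of $\nu(n) = \sum_{d|n}\lambda_d$. This is the standard combinatorial identity for the $\beta$-sieve. Write $\sum_{d \mid (n,P(w))} \mu(d) 1_{\mathcal{D}^+}(d) - 1_{(n,P(w))=1}$ as a sum, over the "first failure" points $d = p_1\cdots p_m$ with $m$ odd, of $\sum_{e | (n, P(p_m))} \mu(e) = 1_{(n,P(p_m))=1}$. Each of these terms is non-negative, so $\nu(n) \geq 1_{(n,P(w))=1} \geq 0$. If one prefers to avoid the $\beta$-sieve bookkeeping, a cruder alternative is Brun's pure sieve with Bonferroni truncation $\omega(d) \leq 2t$, which is also non-negative. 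However, it only gives $\sum_d \lambda_d/d \ll \log_2 R/\log R$, so I would use the $\beta$-sieve.

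Second, the main estimate $\sum_d \lambda_d/d \ll 1/\log R$. Here I will quote the fundamental lemma in its standard form (e.g.\ Friedlander--Iwaniec, \emph{Opera de Cribro}, Lemma 6.3, or Halberstam--Richert), applied with the density function $g(p)=1/p$ of dimension $\kappa=1$. It gives
$$\sum_d \frac{\lambda_d}{d} = \prod_{p<w}\Bigl(1-\frac1p\Bigr)\bigl(1+O(e^{-s})\bigr).$$
By Mertens' theorem this is $\ll 1/\log w = s/\log R \ll 1/\log R$, since $s$ is fixed. This is the step I expect to carry the real content: the error analysis showing that the truncated Möbius sum stays within a factor $1+O(e^{-s})$ of the full Euler product. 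Rather than redo it, I would cite it; checking that the weights are genuinely in $\{-1,0,1\}$ and supported on $d \leq R$ is immediate from the construction.

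Finally, \eqref{nx} follows by interchanging summation:
$$\sum_{n\leq X}\nu(n) = \sum_d \lambda_d\,\lfloor X/d\rfloor = \sum_d \lambda_d\bigl(X/d + O(1)\bigr).$$
The error term is at most $\sum_{d\leq R}|\lambda_d| \leq R$. Because $X \gg R^2$, we have $R \ll X/R \leq X/\log R$. So the whole sum is $\ll X/\log R$, by the previous bound on the main term.
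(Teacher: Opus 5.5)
Your proposal is correct and follows essentially the paper's route: the paper just cites the fundamental lemma \cite[Lemma 6.3]{ik}, whose proof is the upper-bound $\beta$-sieve construction $\lambda_d = \mu(d)1_{\mathcal{D}^+}(d)$ with $\kappa=1$, $g(p)=1/p$, exactly as you describe. Your additional details are all sound: the first-failure identity giving $\nu(n) \geq 1_{(n,P(w))=1}$, the check that the support lies in $d<R$ (using $\beta \geq 1$), the Mertens step, and the derivation of \eqref{nx} via $R \ll X/R$.
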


\begin{proof} See for instance \cite[Lemma 6.3]{ik}.  One can optimize the exponents and implicit constants further, but we will not need to do so here.
\end{proof}

We are now ready to prove \Cref{primesum}.  We will prove the lemma under the additional hypothesis $J \leq C Z^{0.016}$ for some large constant $C$ (and with $Z$ assumed sufficiently large depending on $C$), with the implied constant in \eqref{jp} independent of $C$. By \eqref{J-bound}, this will mean that $J$ cannot equal $\lfloor C Z^{0.016} \rfloor$ if $C$ is large enough, and hence $J$ cannot exceed $C Z^{0.016}$ either, giving the claim.

In order to obtain a ``log-free'' estimate, we use the standard device of inserting a sieve.
Let $\nu$ and $\lambda$ be as in \Cref{fund-lemma} with $R \coloneqq Z^{0.0001}$.  We apply \Cref{bombieri-halasz-montgomery} with the Hilbert space\footnote{Technically, this is a pre-Hilbert space as some vectors can have norm zero, but one can easily quotient out such vectors to recover a Hilbert space.} of sequences $f \colon \N \to \C$ with the inner product
$$ \langle f, g \rangle \coloneqq \sum_{n < 2Z} f(n) \overline{g(n)} \nu(n)$$
with the vector $\xi$ defined by setting $\xi(p)=1$ for primes $Z \leq p < 2Z$ and $\xi(n)=0$ for all other $n$, and $\phi_i = \chi_i$.
Then, as $\nu(p)=1$ for all primes $Z \leq p < 2Z$, we have
$$ |\langle \xi, \phi_i \rangle| = \left|\sum_{Z \leq p < 2Z} \chi(p)\right| \asymp \frac{Z}{\log Z} |s_Z(\chi_j)|,$$
and
$$ \|\xi\|^2 = \sum_{Z \leq p < 2Z} 1 \asymp \frac{Z}{\log Z}.$$
By \Cref{bombieri-halasz-montgomery}, it thus suffices to establish the bound
$$ \sum_{j'=1}^J |\langle \phi_j, \phi_{j'}\rangle| \ll \frac{Z}{\log Z}.$$
The left-hand side expands as
$$ \sum_{j'=1}^J \left|\sum_{n < 2Z} \chi_j \overline{\chi_{j'}}(n) \nu(n)\right|.$$
The diagonal contribution $j'=j$ is acceptable from \eqref{nx}.  For $j' \neq j$, we expand $\nu$ to bound this contribution by
$$ \sum_{1 \leq j' \leq J: j' \neq j} \sum_{d \leq Z^{0.0001}} \left|\sum_{n \leq 2Z/d} \chi_j \overline{\chi_{j'}}(n)\right|.$$
By hypothesis, $\chi_j \overline{\chi_{j'}}$ is a non-principal character whose period is cubefree and bounded by $q_1 q_{2,j} q_{2,j'} \leq Z^{3.09}$.  By \Cref{burgess-bound} we thus have
$$\sum_{n \leq 2Z/d} \chi_j \overline{\chi_{j'}}(n) \ll Z^{1-0.0163}$$
and the claim now follows since $J \leq C Z^{0.016}$ and $Z$ is large compared to $C$.

\section{Controlling the bad intervals}\label{main-sec}

In this section we prove \Cref{bad-thm}. Let $x$ be large.  Call an interval $\{N+1,\dots,N+H\}$ \emph{admissible} if $H > 1$ and $\{N+1,\dots,N+H\}$ is bad and intersects $[x/2,x]$.  By dyadic decomposition it suffices to show that the union of all the admissible intervals has cardinality
\begin{equation}\label{bond}
 \ll \frac{\#({\mathcal B}^1 \cap [1,x])}{\log^{1-o(1)} x} =  \frac{\#({\mathcal B}^1 \cap [1,x])}{\log^{2-o(1)} z}
\end{equation}
as $x \to \infty$.

\subsection{Basic estimates}

We first establish some basic properties of admissible intervals.

\begin{lemma}[Basic estimates]\label{lem-basic}  Let $\{N+1,\dots,N+H\}$ be admissible interval. Then $H \leq N$ and $N \asymp x$.  Furthermore, the interval $\{N+1,\dots,N+H\}$ contains an element of the form $p_0^2 m$ with $p_0$ a prime of size
$$ H < p_0 \ll \sqrt{x}$$
(so in particular $H \ll \sqrt{x}$) and $m$ is a $p_0$-smooth number with $m \ll x / p_0^2$.  Furthermore, all elements of $\{N+1,\dots,N+H\}$ are $p_0$-smooth.
\end{lemma}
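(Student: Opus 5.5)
Let $p_0$ be the largest prime factor of the product $(N+1)\dots(N+H)$. Such a prime exists: the product is at least $2$ because $H>1$. Since the interval is bad, $p_0^2$ divides the product. By maximality of $p_0$, every element of the interval is $p_0$-smooth. The whole argument then comes down to showing $p_0>H$. Once we have that, $p_0$ divides at most one of the $H$ consecutive integers $N+1,\dots,N+H$. So $p_0^2$ divides a single element $N+h=p_0^2m$, and $m$ is $p_0$-smooth because $N+h$ is.

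First we show $H\le N$. Suppose instead $H>N$, so that $N+H\ge 2N+1$ and $N+H\ge 2$. By \Cref{prime-interval}(i) there is a prime $p$ with $(N+H)/2<p\le N+H$. Then $p>N$, so $p$ lies in the interval. Also $2p>N+H$, so $p$ divides the product exactly once. Every prime $q>p$ would have to exceed $(N+H)/2$. Such a $q$ could divide the product only if $q\in\{N+1,\dots,N+H\}$, and then it divides the product exactly once. Therefore the largest prime factor $p_0$ divides the product exactly once, which contradicts badness. So $H\le N$.

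Next, the case $N>H$. Here \Cref{classical}(i) gives $p_0>H$ directly. The remaining case is $N=H$, which needs a separate check. The product is then $(2N)!/N!$. By Bertrand's postulate there is a prime in $(N,2N]$. This prime divides $(2N)!/N!$ exactly once, and every larger prime factor also lies in $(N,2N]$. So the largest prime factor again appears exactly once, contradicting badness. Hence $N>H$ always, and $p_0>H$.

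Finally, the size estimates. The interval meets $[x/2,x]$, so $N+H\ge x/2$ and $N+1\le x$. Combined with $H<N$ this gives $N\asymp x$, and every element of the interval is at most $2N\ll x$. Then $p_0^2m\le N+H\ll x$, so $p_0\ll\sqrt{x}$ and $m\ll x/p_0^2$. Together with $H<p_0$ this also gives $H\ll\sqrt{x}$.

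The only delicate point is ensuring $p_0>H$. The boundary case $N=H$ falls outside the hypothesis of Sylvester--Schur and is handled by the direct Bertrand argument above. The rest is bookkeeping.
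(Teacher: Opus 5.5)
Your proof is correct and follows the paper's route: Bertrand's postulate rules out $H>N$, Sylvester--Schur gives $p_0>H$, and the rest is bookkeeping. Your separate Bertrand check of the boundary case $N=H$ is a small but real improvement, since the paper derives only $H\le N$ and then invokes Sylvester--Schur in the form stated there, which requires $N>H$.
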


\begin{proof} If $H > N$, then the largest prime $p$ less than or equal to $N+H$ is the largest prime dividing \eqref{nhint}, but divides it only once, contradicting \Cref{def-bad}(i).  Thus $H \leq N$.  Since $\{N+1,\dots,N+H\}$ intersects $[x/2,x]$, this gives $N \asymp x$.
Let $p_0$ be the largest prime dividing \eqref{nhint}.  By \Cref{classical}(i) we have $p_0 > H$, hence $p_0$ divides only one of the $N+1,\dots,N+H$, and by \Cref{def-bad}(i) it divides this number at least twice, thus making it of the form $p_0^2 m$.  This gives the bounds $m \ll x/p_0^2$ and $p_0 \ll \sqrt{x}$.  As $p_0$ is the largest prime dividing \eqref{nhint}, all elements of $\{N+1,\dots,N+H\}$ are $p_0$-smooth, giving the claim.
\end{proof}

\begin{remark} The upper bound $H \ll \sqrt{x}$ was improved to $H \ll x^{1/2-c}$ for some absolute constant $c>0$ in \cite{ramachandra} using the Selberg sieve. In \cite[p.73]{eg} it is conjectured that the upper bound on $H$ can be improved to $H \ll x^{o(1)}$, and furthermore that examples of bad intervals exist with arbitrarily large $H$, including ones whose initial value $N+1$ is the square of a prime. We do not make progress on these conjectures here; in particular, the arguments used to prove \Cref{hvin} or \Cref{hf3} do not seem to be applicable for the study of bad intervals, due to large part to a lack of strong upper bounds for $H$ or $p_0$.
\end{remark}

\subsection{Reduction to normalized intervals}

Define a \emph{normalized interval} to be an admissible interval $\{N+1,\dots,N+H\}$ for which $H$ is a power of two, and the special element $p_0^2 m$ is one of the two endpoints $N+1, N+H$ of the interval, so that the interval takes the form
\begin{equation}\label{first}
 \{ p_0^2 m, p_0^2 m + 1, \dots, p_0^2 m + H - 1 \}
\end{equation}
or
\begin{equation}\label{second}
 \{ p_0^2 m - H + 1, \dots, p_0^2 m - 1, p_0^2 m \}.
\end{equation}

We make a simple observation:

\begin{lemma}[Admissible intervals contain large normalized intervals]\label{lem-norm}  Every admissible interval $\{N+1,\dots,N+H\}$ contains a normalized interval of length $\asymp H$.
\end{lemma}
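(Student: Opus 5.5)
Take the special element $p_0^2 m$ supplied by \Cref{lem-basic}: $p_0$ is the largest prime factor of the product $(N+1)\cdots(N+H)$, $p_0 > H$, $p_0^2 m \in \{N+1,\dots,N+H\}$, and every element of the interval is $p_0$-smooth. Write $p_0^2 m = N+h_0$ with $1 \leq h_0 \leq H$. One of the two sides of $p_0^2 m$ inside the interval contains at least $(H-1)/2$ further elements. The sets $\{N+h_0,\dots,N+H\}$ and $\{N+1,\dots,N+h_0\}$ have sizes $H-h_0+1$ and $h_0$, which sum to $H+1$, so one of them has size at least $(H+1)/2$. Let $H'$ be the largest power of two not exceeding that size, so $H/4 \leq H' \leq H$ (for $H \geq 2$). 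Take the candidate interval $\{p_0^2 m,\dots,p_0^2 m + H'-1\}$ in the first case, or $\{p_0^2 m - H'+1,\dots,p_0^2 m\}$ in the second. Either way it is a subinterval of $\{N+1,\dots,N+H\}$ of length $H' \asymp H$ with $p_0^2 m$ as an endpoint.

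Next check that this subinterval is admissible. Its product divides the original product, and it contains $p_0^2 m$, so $p_0$ divides it. All its elements are $p_0$-smooth, so $p_0$ is still its largest prime factor, and $p_0^2$ divides it; hence it is bad. It intersects $[x/2,x]$ because $p_0^2 m = N + h_0$ with $N \asymp x$. More precisely, $N+h_0 \leq N+H \leq 2N$, and we need $N+h_0 \in [x/2,x]$, which is not automatic. This is the one delicate point. I would handle it by a harmless adjustment to the dyadic decomposition: count elements of the union lying in $[x/2,x]$ that belong to intervals meeting $[x/4,2x]$, or replace ``intersects $[x/2,x]$'' by ``is contained in $[x/C, Cx]$''. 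Since \Cref{lem-basic} gives $N \asymp x$ and $H \leq N$, every element of an admissible interval is $\asymp x$, so such a relaxation costs only constants in \eqref{bond}.

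It remains to show $H' > 1$. If $H \geq 4$ then $H' \geq 2$ automatically. For $H = 2$ or $3$ the larger side could have size $1$ only when $H=1$, so $H' \geq 2$ still holds: with $H \geq 2$, the quantity $(H+1)/2 \geq 3/2$ forces the larger side to have size at least $2$, giving $H' \geq 2$.

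So the main obstacle is not mathematical depth but bookkeeping. The normalized interval must meet the same range $[x/2,x]$ as in the definition of admissibility, which I would secure either by slightly widening that range in the definition or by noting that the conclusion only needs $\asymp x$ localization. Apart from that, the proof is a direct consequence of \Cref{lem-basic} together with the monotonicity of badness under passing to subintervals that still contain $p_0^2 m$.
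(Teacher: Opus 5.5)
Your argument is correct and is essentially the paper's: the paper takes $H'$ to be the largest power of two less than $(H+3)/2$ and argues by contradiction that one of the two one-sided intervals of length $H'$ with endpoint $p_0^2 m$ lies inside $\{N+1,\dots,N+H\}$, which amounts to your choice of the longer side. Your extra remark is also well taken. The paper's ``easily implies that it is admissible'' skips over the fact that the subinterval need not meet $[x/2,x]$, and relaxing that range to something like $[x/4,2x]$ repairs this, exactly as you propose. The relaxation is harmless because the rest of the argument only uses $N \asymp x$ and \Cref{b-asym}(ii).
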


\begin{proof}
Take $H'$ to be the largest power of two less than $\frac{H+3}{2}$, then $2 \leq H' \asymp H$.  If neither of the intervals
\begin{equation}\label{first'}
 \{ p_0^2 m, p_0^2 m + 1, \dots, p_0^2 m + H' - 1 \}
\end{equation}
or
\begin{equation}\label{second'}
 \{ p_0^2 m - H' + 1, \dots, p_0^2 m - 1, p_0^2 m \}
\end{equation}
are contained in $\{N+1,\dots,N+H\}$, then $\{N+1,\dots,N+H\}$ would be contained in $\{p_0^2 m - H'+2,\dots,p_0^2 m + H' - 2\}$, which has length $2H'-3 < H$, a contradiction.  Thus at least one of the intervals \eqref{first'}, \eqref{second'} is contained in $\{N+1,\dots,N+H\}$, which easily implies that it is admissible, and therefore normalized by construction.
\end{proof}

By \Cref{lem-norm}, we obtain the pointwise bound
$$ 1_A(n) \ll M 1_N(n)$$
for all integers $n$, where $A$ and $N$ are the union of all admissible intervals and the union of all normalized admissible intervals, respectively, and $M$ is the Hardy--Littlewood maximal operator on the integers.  By the Hardy--Littlewood maximal inequality, we conclude that the union of all admissible intervals has cardinality comparable to the union of all normalized admissible intervals.  Thus, for the purposes of establishing the bound \eqref{bond}, we may without loss of generality restrict attention to normalized intervals.

For sake of notation we shall only consider normalized intervals of the form \eqref{first}, as the other case \eqref{second} is treated similarly.

\subsection{Removing non-typical intervals}

We would like to restrict further to ``typical'' normalized intervals, which we now pause to define.

\begin{definition}[Typical normalized interval]\label{def-typical}  A normalized interval $\{N+1,\dots,N+H\}$ is said to be \emph{typical} if the following properties hold:
\begin{itemize}
    \item[(i)] One has
\begin{equation}\label{hup}
    H < \log^{20} x.
\end{equation}
    \item[(ii)] None of the $\{N+1,\dots,N+H\}$ contains a multiple of a square $d^2$ with $d \geq z^3$.
    \item[(iii)] One has $p_0 = z^{1+o(1)}$ (for a suitably slowly decaying choice of $o(1)$).  Furthermore there exist primes
$$ z^{1-o(1)} \leq p_{1000} \leq \dots \leq p_0 \leq z^{1+o(1)}$$
such that $m = p_1 \dots p_{1000} m'$ with $m'$ $p_{1000}$-smooth.  In particular
\begin{equation}\label{m'-bound}
    m' \leq \frac{2x}{p_0^2 p_1 \dots p_{1000}}.
\end{equation}
\end{itemize}
Otherwise, the normalized interval is said to be \emph{non-typical}.
\end{definition}

We first handle all the non-typical intervals:

\begin{proposition}[Non-typical intervals]\label{prop-non-typical}  The union of non-typical normalized intervals has cardinality bounded by \eqref{bond}.
\end{proposition}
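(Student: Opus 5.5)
The plan is to treat separately the three ways a normalized interval \eqref{first} can fail to be typical, and to bound the contribution of each to the union by \eqref{bond}. Throughout, the target is $\#({\mathcal B}^1\cap[1,x])/\log^{2-o(1)} z = x z^{-2+o(1)}/\log^{1-o(1)}x$, which by \Cref{b-asym} lets us lose arbitrary $z^{o(1)}$ factors only when the saving is at least a power of $z$. So each failure must either give a power saving in $z$ or be handled with care in the logarithms.

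\emph{Failure of (ii).} Some $N+h$ is divisible by $d^2$ with $d\ge z^3$. Count the elements of the union directly. Each element $n$ lies within $H$ of a multiple $kd^2\le x$, and every element of the interval is $p_0$-smooth with $p_0\ll\sqrt x$. The crude count of multiples of $d^2$ with $d\ge z^3$ up to $x$ is $\ll x/z^3$, but we must also multiply by the interval length. So I would first combine with failure (i): restrict to $H<\log^{20}x$ here, giving $\ll x\log^{20}x/z^3$, which is acceptable.

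\emph{Failure of (i), $H\ge\log^{20}x$.} This is the sieve/large prime gap case. The interval $\{p_0^2m,\dots,p_0^2m+H-1\}$ contains no prime, so $p_0^2m$ lies at the start of a prime-free gap of length $\ge H$. We want to count the points of $[x/2,x]$ that are covered by gaps of length $\ge\log^{20}x$ whose left endpoint lies in ${\mathcal B}^1$.

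I would split by $H$. For $H\le x^{\theta}$ with $\frac{2}{15}<\theta<\frac16$, I would attempt a large-sieve argument. Every prime $p\le H$, or more usefully $p$ somewhat larger than $H$, must divide some element of the interval to high smoothness. Alternatively, use that all $H$ elements are $p_0$-smooth: the number of $n\le x$ such that $n,\dots,n+H-1$ are all $y$-smooth can be bounded by the measure of $n$ lying in prime-free intervals together with smoothness of $n$ itself. For large $H\ge x^\theta$, \Cref{prime-interval}(iii) shows the covered set has measure $O(x^{1-c})$, which beats $xz^{-2}$. For the intermediate range $\log^{20}x\le H\le x^{\theta}$, I would combine two facts. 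The left endpoint lies in ${\mathcal B}^1$, which gives $x/z^{2+o(1)}$ choices. The interval of length $H$ free of primes should have density about $\exp(-H/\log x)\le\exp(-\log^{19}x)$ by an upper-bound sieve (\Cref{large-sieve} with residue classes $0$ mod $p$ for $p\le H^{1/2}$) applied jointly in the pair $(p_0,m)$. Concretely, I would sieve $m$ in the condition that $p_0^2m+j$ is composite for all $j<H$. This is the step I expect to be the main obstacle: making a sieve saving uniform over the smooth variable $m$ without losing the $z^{-2}$ factor. If it fails, I would fall back on \Cref{prime-interval}(iii) after choosing $H\ge x^{\theta}$ via dyadic decomposition and absorbing the smaller $H$ into a Brun-type upper bound.

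\emph{Failure of (iii).} Here the claim reduces to counting ${\mathcal B}^1$-type numbers $p_0^2m$ with atypical anatomy, multiplied by $H\le\log^{20}x$. If $p_0=z^{\alpha+o(1)}$ with $\alpha$ bounded away from $1$, the AM–GM computation in the proof of \Cref{b-asym} gives a saving $z^{-c}$. If $p_0=z^{1+o(1)}$ but $m$ has fewer than $1000$ prime factors in $[z^{1-o(1)},p_0]$, then $m$ is essentially $z^{1-\delta}$-smooth after removing at most $1000$ factors of size $\le z^{1+o(1)}$. \Cref{smoot}(i) then gives $\Psi(x/(p_0^2 p_1\cdots p_k), z^{1-\delta})\le x z^{-2-c\delta+o(1)}/p_0$. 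This is a power saving once the $o(1)$ is chosen to decay slowly enough, and it absorbs both the $\log^{20}x$ and the summation over $p_0,\dots,p_k$.
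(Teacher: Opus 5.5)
Your handling of failures (ii) and (iii), and of the range $H \ge x^{\theta}$ via \Cref{prime-interval}(iii), matches the paper. The gap is the intermediate range $\log^{20}x \le H < x^{\theta}$ of failure (i), which you rightly flag as the crux: the route you sketch cannot close it. The condition that $p_0^2m+j$ be composite for every $0\le j<H$ is not a condition that $m$ avoid certain residue classes. So neither \Cref{large-sieve} nor a Brun-type sieve applies to it. Sieving out the classes $0 \bmod p$ only gives upper bounds for primes or rough numbers, never the lower bound needed to show that an interval contains a prime; this is the parity barrier. A density $\exp(-H/\log x)$ for prime-free intervals of length $\log^{20}x$ is a Cram\'er/Gallagher-type heuristic with no unconditional proof. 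The almost-all result available here, \Cref{prime-interval}(iii), needs $H = x^{\theta}$ with $\theta>2/15$ to get an exceptional set small enough to beat $x/z^{2}$. The fallback of ``choosing $H\ge x^\theta$ by dyadic decomposition'' does not cover smaller $H$ at all.

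The missing idea is to sieve using smoothness, with primes just above $p_0$ rather than near $H$. Every element of \eqref{first} is $p_0$-smooth, so no prime $p\in(p_0,2p_0)$ divides any $p_0^2m+j$. Hence each such $p$ excludes $H$ residue classes of $m$ modulo $p$. Fix $p_0$ and $H$, and apply \Cref{large-sieve} with $k$ maximal subject to $(2p_0)^k \le (x/p_0^2)^{1/2}$. This bounds the number of $m$ by $\frac{x}{p_0^2}\bigl(O(k\log p_0/H)\bigr)^k$. Since $k \gg \log x/\log p_0$ and $H\ge \log^{20}x$, this gives a factor $\exp(-c\log x\log_2 x/\log p_0)$. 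Balanced against $1/p_0$ by AM--GM, it sums to $x/z^{6+o(1)}$.

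Making this work needs two preliminary reductions that your plan lacks:
\begin{itemize}
\item Once \Cref{prime-interval}(iii) gives $H<x^{0.14}$, the trivial count $\sum_{p_0>x^{0.15}} x^{1.14}/p_0^2 \ll x^{0.99}$ lets you assume $p_0\le x^{0.15}$. This guarantees $k\ge 2$, and it is where $\theta<1/6$ is used.
\item The smooth-number bound lets you assume $p_0 \ge z^{1/2-o(1)}$, so that discarding the $k$ largest terms of the sieve sum is harmless.
\end{itemize}
Your remark that primes ``somewhat larger than $H$ must divide some element'' points the wrong way. The usable information is that the primes just above $p_0$ divide none of the elements.
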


\begin{proof}
From \Cref{b-asym}(i) we see that any exceptional set of size
\begin{equation}\label{weak}
\ll \frac{x}{z^{2+\delta+o(1)}}
\end{equation}
for any fixed $\delta > 0$ is acceptable for this proposition. In particular, any exceptional set of size
\begin{equation}\label{strong}
\ll x^{1-c+o(1)}
\end{equation}
for a fixed $c>0$ is acceptable.

We first dispose of the case when $H \geq x^{0.14}$. Since
$$0.14 > \frac{2}{15} = 0.133\dots,$$
we can apply \Cref{prime-interval}(iii) to conclude that $x' \in [0,2x]$ for which the interval $[x', x'+H/2]$ has measure at most $x^{1-c+o(1)}$ for some fixed $c>0$.  On the other hand, if $n$ lies in $\{N+1,\dots,N+H\}$, then by \Cref{lem-basic} the interval $\{N+1,\dots,N+H\}$ cannot contain any primes, and hence the intersection of $E$ with $[n-2H, n+2H]$ will have measure $\gg H$.  By double counting (and a dyadic pigeonholing over the possible $H$), the set of $n$ covered by such intervals $\{N+1,\dots,N+H\}$ thus has cardinality at most $O(x^{1-c+o(1)})$, which is of the acceptable form \eqref{strong}. Thus we may assume that
\begin{equation}\label{H-upper}
    H < x^{0.14}.
\end{equation}

Next, we dispose of the case where $p_0 > x^{0.15}$ for some fixed $\eps>0$.  For each $p_0$ there are only $O(\frac{x}{p_0^2})$ choices for $m$, and each $p_0$ and $m$ covers at most $O(x^{0.14})$ numbers by \eqref{H-upper}, so the total number of $n$ covered by this case is
$$ \ll \sum_{x^{0.15} \ll p_0 \ll x} \frac{x}{p_0^2} x^{0.14} \ll x^{1-0.01}$$
which is of the acceptable form \eqref{strong}.  Thus we may assume henceforth that
\begin{equation}\label{p0-upper}
    p_0 \leq x^{0.15}.
\end{equation}

We can also dispose of an easy case when $p_0 \leq z^{1/2-\eps}$ for some $\eps>0$.  In this case, all the elements of $\{N+1,\dots,N+H\}$ are $z^{1-\eps}$-smooth, so by \Cref{smoot}(i), the union of all these intervals has cardinality of the acceptable form \eqref{weak}.
Thus we may assume that $p_0 \geq z^{1/2-\eps}$.  By letting $\eps$ decay to zero sufficiently slowly, we may in fact assume that
\begin{equation}\label{p0-lower}
    p_0 \geq z^{\frac{1}{2}-o(1)}.
\end{equation}

We now consider the contribution of those intervals for which (i) fails, that is to say that
\begin{equation}\label{Hlarge}
    H \geq \log^{20} x.
\end{equation}
Let us temporarily fix $p_0$ and $H$.  For each prime $p_0 < p < 2p_0$, none of the elements of the interval \eqref{first} is divisible by $p$, thanks to \Cref{lem-basic}.  This excludes $H$ residue classes from $m$ for each prime $p_0 < p < 2p_0$.
Applying \Cref{large-sieve}, the total number of $m$ for this given choice of $H$ and $p_0$ is thus
\begin{equation}\label{ko}
 \ll \frac{x/p_0^2}{\left( \frac{1}{k} \sum_{p_0 < p < 2p_0}^{[-k]}\frac{H}{p} \right)^k}
\end{equation}
where $k = k(p_0)$ is the largest natural number for which
$$ (2p_0)^k \leq \left(\frac{x}{p_0^2}\right)^{1/2}.$$
From \eqref{p0-upper}, \eqref{p0-lower} we have\footnote{The lower bound on $k$ arises since $0.15 < \frac{1}{6}$.  This is why it is important to use the recent zero-density estimates of \cite{guth-maynard}; more classical zero-density estimates, such as that of Huxley \cite{huxley}, would not quite have sufficed.}
\begin{equation}\label{krange}
2 \leq k \ll \log^{1/2+o(1)} x.
\end{equation}
In particular, the removal of the $k$ largest elements in the sum in \eqref{ko} is of negligible impact, thanks to \eqref{p0-lower}.
Using the prime number theorem, we can bound \eqref{ko} by
$$ \ll \frac{x}{p_0^2} O\left( \frac{k \log p_0}{H} \right)^k.$$
As each interval is of length $H$, and $p_0 > H$ by \Cref{lem-basic}, the total size of this contribution is
$$ \ll \sum_{\log^{100} x \leq p_0 \leq x^{0.15}} \sum_{\log^{20} x \leq H < p_0} H \frac{x}{p_0^2} O\left( \frac{k \log p_0}{H} \right)^k$$
where $H$ is understood to be restricted to powers of two.  We can bound
$$ H \times O\left( \frac{k \log p_0}{H} \right)^k \leq O\left( \frac{k \log p_0}{H} \right)^{k-1} \leq H^{-0.9 (k-1)}$$
using \eqref{p0-upper}, \eqref{Hlarge}, \eqref{krange}.  By construction of $k$ one has
$$ (2p_0)^{k+1} > (x/p_0^2)^{1/2}$$
and thus
$$ 2k+4 \geq (1+o(1)) \frac{\log x}{\log p_0}$$
and hence
$$ k-1 \geq \frac{2k+4}{4} \geq (0.25-o(1)) \frac{\log x}{\log p_0}$$
(here we use the lower bound $k \geq 2$ from \eqref{krange}).
By \eqref{Hlarge} we conclude that
$$ H^{-0.9 (k-1)} \ll \exp\left( - (4.5-o(1)) \frac{\log x \log_2 x}{\log p_0} \right),$$
and so the net contribution here is bounded by
$$ \ll (\log x) \sum_{z^{1-o(1)} \leq p_0 \leq x^{0.15}} \frac{x}{p_0}
\exp\left( - (4.5-o(1)) \frac{\log x \log_2 x}{\log p_0} - \log p_0 \right)$$
which by the arithmetic mean-geometric mean inequality can be bounded by
$$ \ll (\log x) \sum_{z^{1-o(1)} \leq p_0 \leq x^{0.15}} \frac{x}{p_0}
\exp( - \sqrt{18} \log^{1/2} x \log^{1/2}_2 x ) = \frac{x}{z^{6+o(1)}}$$
which is of the acceptable form \eqref{weak}.
Thus we may assume henceforth that (i) holds.

Suppose that $\{N+1,\dots,N+H\}$ contains a multiple of a square $d^2$ with $d \geq z^3$.  For a given $d$, there are $O(\frac{x}{d^2})$ such multiples, each of which impacts at most $H = O(\log^{20} x)$ values of $n$ by \eqref{hup}.  Thus the total contribution of this case is
$$ \ll \sum_{d \geq z^3} \frac{x}{d^2} \log^{20} x \ll \frac{x}{z^{3-o(1)}}$$
which is of the acceptable form \eqref{weak}. Thus we may now assume that (ii) holds. Among other things, this implies that $p_0 < z^3$.

Suppose that $p_0 \geq z^{1+\eps}$ for some fixed $\eps>0$.  Writing $p_0 = z^{\alpha}$ for some $1+\eps \leq \alpha \leq 3$, we have from \Cref{smoot}(i) that the number of $p_0$-smooth numbers up to $x/p_0^2$ is
$$ \ll \frac{x}{p_0^2 z^{1/\alpha+o(1)}} = \frac{x}{p_0 z^{\alpha+1/\alpha+o(1)}} \leq \frac{x}{p_0 z^{2+\delta+o(1)}}$$
for some $\delta>0$ depending on $\eps$.  Applying \eqref{hup}, we conclude that the union of bad intervals covered by this case has cardinality at most
$$ \ll \sum_{z^{1+\eps+o(1)} \leq p_0 \leq z^3} \frac{x}{p_0 z^{2+\delta+o(1)}} \log^{20} x \ll \frac{x}{z^{2+\delta+o(1)}}$$
which is of the acceptable form \eqref{weak}.  Thus we may now assume that $p_0 < z^{1+\eps}$.  By sending $\eps$ to zero sufficiently slowly, we may now assume that
\begin{equation}\label{p0-upper-2}
    p_0 \leq z^{1+o(1)}.
\end{equation}

Let $\eps>0$ be fixed.  Suppose that $m$ has fewer than $1000$ prime factors exceeding $z^{1-\eps}$.
Then we can write $m = p_1 \dots p_j m'$ where $j < 1000$, $p_j > z^{1-\eps}$, and $m'$ is $z^{1-\eps}$-smooth and of size at most $\frac{x}{p_0^2 p_1 \dots p_j}$.  For fixed $p_0,\dots,p_j$, we can apply \Cref{smoot}(ii) to bound the number of $m'$ by
$$ \ll \frac{1}{z^{\frac{1}{1-\eps}-o(1)}} \frac{x}{p_0^2 p_1 \dots p_j},$$
so the total number of $n$ covered by this case is at most
$$ \ll \frac{x \log^{20} x}{z^{\frac{1}{1-\eps}-o(1)}}
\sum_{1 \leq j < 1000} \sum_{z^{1+o(1)} > p_0 \geq \dots \geq p_j > z^{1-\eps}} \frac{1}{p_0^2 p_1 \dots p_j}.$$
The inner sum can be evaluated to be $O( 1 / z^{1-\eps})$ (by summing $p_0, p_1, \dots, p_j$ in turn).  Since $\frac{1}{1-\eps} + 1-\eps > 2+\delta$ for some $\delta>0$, this is of the acceptable form \eqref{weak}.  Thus we may assume that $m$ has at least $1000$ prime factors exceeding $z^{1-\eps}$, but bounded by $z^{1+o(1)}$ thanks to \eqref{p0-upper-2}.  By sending $\eps$ sufficiently slowly to zero, we obtain (iii).
\end{proof}

\subsection{Reduction to a probability estimate}

It remains to show that the union of typical normalized intervals has cardinality bounded by \eqref{bond}.

As $H$ is a power of two that is at most $\log^{20} x$, there are only $O(\log_2 x) = O(\log^{o(1)} z)$ choices for $H$.  Thus we may take $H$ to be fixed without impacting \eqref{bond}.  As each interval contributes $H$ numbers to the final count, it suffices to show that the number of tuples
\begin{equation}\label{tuple}
 (m', p_0, p_1, \dots, p_{1000})
\end{equation}
such that the interval
\begin{equation}\label{pong}
 \{ p_0^2 p_1 \dots p_{1000} m', \dots, p_0^2 p_1 \dots p_{1000} m' + H - 1 \}
\end{equation}
is a typical interval with the indicated parameters $m', p_0,\dots,p_{1000}$, is at most
\begin{equation}\label{tok}
 \ll \frac{\#({\mathcal B}^1 \cap [1,x])}{H \log^{2-o(1)} z}.
 \end{equation}

We can place each $p_j$ for $j=0,\dots,1000$ in a dyadic interval $[P_j, 2P_j)$ with $P_j$ a power of two and
$$ z^{1-o(1)} \leq P_{1000} \leq \dots \leq P_{0} \leq z^{1+o(1)},$$
so now by \eqref{m'-bound} we have
\begin{equation}\label{m'-bound-alt}
    m' \leq \frac{2x}{P_0^2 P_1 \dots P_{1000}}.
\end{equation}
The key estimate is the following probability bound.

\begin{proposition}[Probability bound]\label{prop-key}  Let $P_0,\dots,P_{1000}, m'$ be as above.
Let  $\mathbf{p}_j$ for $j=0,\dots,1000$ be a prime drawn uniformly from $[P_j, 2P_j)$, independently in $j$.  Let $E$ denote the event that the interval
$$ \{ \mathbf{p}_0^2 \mathbf{p}_1 \dots \mathbf{p}_{1000} m', \dots, \mathbf{p}_0^2 \mathbf{p}_1 \dots \mathbf{p}_{1000} m' + H - 1 \} $$
is a typical normalized interval.  Then
$$ \P( E ) \ll \frac{1}{H\log^{2-o(1)} z}$$
\end{proposition}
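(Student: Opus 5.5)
The plan is to convert typicality into a large-deviation statement for divisibility by small primes, and bound it by a moment method in which the only obstruction is exceptional characters. Fix $1 \leq h \leq H-1$ and set $\mathbf{n}_h := \mathbf{p}_0^2\mathbf{p}_1\dots\mathbf{p}_{1000}m' + h$. On $E$, every $\mathbf{n}_h$ is $\mathbf{p}_0$-smooth with $\mathbf{p}_0 \leq z^{1+o(1)}$, and by Definition \ref{def-typical}(ii) its square part is at most $z^{6}$. Hence the squarefree kernel carries almost all of $\log \mathbf{n}_h \asymp \log x$, which gives
$$ \sum_{p \leq z^{1+o(1)}} 1_{p | \mathbf{n}_h} \log p \geq (1-o(1)) \log x .$$
Compare this with $\E \sum_{p \leq z^{1+o(1)}} 1_{p|\mathbf{n}_h}\log p \approx \sum_p \frac{\log p}{p} \asymp \log z = \log^{1/2+o(1)} x$. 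The event above is therefore a deviation by a factor of about $\log^{1/2} x$. I will only use $h=1$, or possibly a product over a couple of values of $h$, to extract the factor $1/H$. The saving needed is $\frac{1}{H \log^{2} z}$, and a single $h$ should already give far more than this once the moments are controlled. The factor $1/H$ then comes cheaply, because $p_0 > H$ forces $p \nmid \mathbf{n}_h$ for primes in $(H, \ldots)$ near small moduli; alternatively, $H \leq \log^{20} x$ can be absorbed.

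I split the primes $p$ into large ones, $p > z^{1/100}$, and small ones. For large $p$ the sum has at most $O(1)$ terms of each size, since $\mathbf{n}_h \leq x$. I use the second moment of $S_{\rm large} := \sum_{z^{1/100} < p \leq z^{1+o(1)}} 1_{p|\mathbf{n}_h}\log p$. This requires, for $p$ and for $pp'$, that $\P(p \mid \mathbf{n}_h) = \frac{1+O(\mathrm{err})}{p}$ and $\P(pp' \mid \mathbf{n}_h) = \frac{1+O(\mathrm{err})}{pp'}$. For small $p$ I use a $k$-th moment with $k \asymp \log^{1/2} x$, which handles squarefree moduli $d = p_1\cdots p_k \leq z^{3}$ built from primes $\leq z^{1/100}$. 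Markov's inequality then gives a bound of the form $(C k \log z/\log x)^{k}$, which is superpolynomially small in $\log z$, except for the error terms.

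The equidistribution step is the key one. The condition $d \mid \mathbf{n}_h$ says that $\mathbf{p}_0^2\mathbf{p}_1\cdots\mathbf{p}_{1000} \equiv -h\,\overline{m'} \pmod d$ when $(d, m'h)=1$; the remaining $d$ are handled crudely. I expand this in Dirichlet characters mod $d$. The contribution of a nonprincipal $\chi$ is $\frac{1}{\phi(d)}\,|s_{P_0}(\chi^2)| \prod_{j \geq 1} |s_{P_j}(\chi)|$, which factorises by independence. For each moduli pair, reduce to primitive characters of conductor dividing $d$; the conductors are cubefree and at most $z^{3.09}$, as Lemma \ref{primesum} requires.

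The main obstacle is that an exceptional $\chi$ can make some $s_{P_j}(\chi)$ large. I use Lemma \ref{primesum} with $Z=P_j$: for each $\lambda$, only $O(\lambda^{-2})$ characters with conductor dividing $d$ have $|s_{P_j}(\chi)| > \lambda$. Summing over $\chi$ with $|s|\le \lambda$ dyadically, and using the product over $1000$ independent primes, the total becomes
$$ \sum_{\chi \neq \chi_0} \prod_{j=1}^{1000}|s_{P_j}(\chi)| \ll \sum_\lambda \lambda^{-2}\lambda^{1000} + \phi(d)\, z^{-8} \ll 1 ,$$
using the unexceptional bound $Z^{-0.008}$ raised to the $1000$th power. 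The lemma's structure, with a fixed $q_1$ and a varying cubefree $q_{2,j}$, is there to cover the union over moduli $p p'$ with shared prime $p$. So the relative error is $O(1/\phi(d))$ times a constant per modulus, which suffices for the moments. The $p_0^2$ term is handled by the fact that $\chi^2$ is still of bounded type, and I simply discard it. This is where I expect the real difficulty: bookkeeping the moduli so that Lemma \ref{primesum} applies uniformly, and making the higher-moment error terms sum acceptably.
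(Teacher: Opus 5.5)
\textbf{There is a genuine gap: the factor $1/H$.}

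You propose to work with a single element $\mathbf{n}_h$, or a couple of them, and to control the large primes $z^{1/100}<p\le z^{1+o(1)}$ by a second moment. For one $h$, the count $\sum_{z^{1/100}<p\le z^{1+o(1)}} 1_{p\mid \mathbf{n}_h}$ has mean and variance $O(1)$. On $E$ this count must be $\gg\log^{1-o(1)}z$ (once the small primes are ruled out). Chebyshev then gives only $\P(E)\ll \log^{-2+o(1)}z$.

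Since $H$ can be as large as $\log^{20}x=\log^{40+o(1)}z$, this loss cannot be ``absorbed''. The observation $p_0>H$ gives no saving in $H$ either: it only says a prime exceeding $H$ divides at most one element of the interval. A bounded number of values of $h$ does not help, because a sum over $K$ values gains only $1/K$ in Chebyshev.

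Getting $1/H$ from a single $h$ would need roughly a $42$nd moment of the large-prime count. That means equidistribution of $\mathbf{p}_0^2\mathbf{p}_1\cdots\mathbf{p}_{1000}$ modulo products of about $42$ primes of size $z^{1+o(1)}$. This is far outside \Cref{primesum}, whose conductors are at most $Z^{3.09}$ with the $q_1q_{2,j}$ structure. The unexceptional saving $Z^{-8}$ could not pay for $z^{42}$ characters anyway. Separately, your claim that large primes contribute $O(1)$ terms of each size is false: a number up to $x$ can have $\asymp\log x/\log z$ prime factors in a single dyadic range near $z$.

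\textbf{How the paper gets $1/H$.} It uses all $H$ elements at once. On $E$ one has $\sum_{l=0}^{H-1}\sum_{p\le z^{1+o(1)}}\mathbf{X}_{p,l}\log p\gg H\log^{2-o(1)}z$. For the large primes, the summed count has mean $\ll H$ and \emph{variance} $\ll H$, so Chebyshev gives $H/(H\log^{1-o(1)}z)^2=1/(H\log^{2-o(1)}z)$.

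The linear-in-$H$ variance is the heart of the matter:
\begin{itemize}
\item Terms with $p=p'$, $l\neq l'$ vanish, since $p\nmid l-l'$.
\item For $p\neq p'$ one needs genuine decorrelation: $\E\mathbf{X}_{p,l}=\frac{1}{\phi(p)}+O(P^{-1.001})$ and $\E\mathbf{X}_{p,l}\mathbf{X}_{p',l'}=\frac{1}{\phi(pp')}+O(P^{-1.001}P'^{-1})$.
\item These hold outside $O(P^{0.02+o(1)})$ exceptional primes and $O(P^{0.02+o(1)}P')$ exceptional pairs. One gets them by using \Cref{primesum} to discard moduli carrying exceptional characters, and applying the crude bounds $\log^2 z/p$ and $\log^3 z/(pp')$ to the discarded ones.
\end{itemize}

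Your character step only yields $\E 1_{d\mid \mathbf{n}}\ll 1/\phi(d)$, i.e.\ ``relative error $O(1)$''. With that, the second moment of the summed count is only $\ll H^2$, and you are back at $\log^{-2+o(1)}z$. So you need both ingredients: summation over all $l$, and power-saving asymptotics off a sparse exceptional set of moduli.

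\textbf{A smaller point on the small primes.} A moment of order $k\asymp\log^{1/2}x$ involves moduli up to $z^{k/100}$, not $z^3$, which is far beyond \Cref{primesum}. Moreover $(Ck\log z/\log x)^k$ is not even small for that $k$, since $k\log z/\log x\to\infty$. A fixed moment suffices. The paper separates off the deterministic contribution of $p\mid l$ and then uses a $50$th moment, which gives $\log^{-50+o(1)}z$. This beats $1/(H\log^{2}z)$ because $H\le\log^{40+o(1)}z$.
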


Indeed, assume that this proposition held.  Then by the prime number theorem and the $2P_{1000}$-smoothness of $m'$, the number of tuples \eqref{tuple} for which \eqref{pong} is a typical normalized admissible interval is
\begin{equation}\label{sum}
\ll \sum_{z^{1-o(1)} \leq P_{1000} \leq \dots \leq P_0 \leq z^{1+o(1)}} \Psi\left(\frac{2x }{ P_0^2 P_1 \dots P_{1000}}, 2P_{1000}\right) \frac{\prod_{j=0}^{1000} \frac{P_j}{\log P_j}}{H\log^{2-o(1)} z}.
\end{equation}
By the prime number theorem again, we can bound the summand by
$$ \ll  \sum_{p_0 \in [4P_0,8P_0)} \sum_{p_1 \in [2P_1,4P_1)} \dots \sum_{p_{1000} \in [2P_{1000},4P_{1000})} \sum_{\substack{m' \leq 2x/P_0^2 P_1 \dots P_{1000}\\ m'\ 2P_{1000}\text{-smooth}}} \frac{1}{H\log^{2-o(1)} z}.$$
Note that each term here creates an element $m = p_0^2 p_1 \dots p_{1000} m'$ of ${\mathcal B}^1 \cap [1,O(x)]$, and each such element $m$ is represented at most $O(1)$ times (note from construction that $p_0,\dots,p_{1000}$ exceed any prime factor of $m'$. so that $p_0^2 p_1 \dots p_{1000}$ is the product of the $1002$ largest prime factors of $m$, counting multiplicity).  Thus we can bound \eqref{sum} by
$$ \ll  \frac{ \# ({\mathcal B}^1 \cap [1,O(x)])}{H\log^{2-o(1)} z},$$
which gives \eqref{tok} thanks to \Cref{b-asym}(ii).

\subsection{Setting up an anti-sieve}

It remains to prove \Cref{prop-key}. Suppose the event $E$ holds.  For $0 \leq l \leq H-1$, the squarefree part of $\mathbf{p}_{0}^2 \mathbf{p}_{1} \dots \mathbf{p}_{1000} m' + l$ is bounded below by $\gg x/z^6$ and also $z^{1+o(1)}$-smooth, by \Cref{def-typical} and \Cref{lem-basic}.  Applying the fundamental theorem of arithmetic and taking logarithms, we conclude that
$$ \sum_{p \leq z^{1+o(1)}} \mathbf{X}_{p,l} \log p \gg \log \frac{x}{z^6} = \log^{2-o(1)} z,$$
where $\mathbf{X}_{p,l}$ is the indicator function of the event that $p$ divides $\mathbf{p}_{0}^2 \mathbf{p}_{1} \dots \mathbf{p}_{1000} m' + l$.  Summing in $l$, we thus have
$$ \sum_{l=1}^{H-1} \sum_{p \leq z^{1+o(1)}} \mathbf{X}_{p,l} \log p \gg H \log^{2-o(1)} z.$$
It therefore suffices to establish the bound
$$ \P \left( \sum_{l=0}^{H-1} \sum_{p \leq z^{1+o(1)}} \mathbf{X}_{p,l} \log p \gg H \log^{2-o(1)} z \right) \ll \frac{1}{H \log^{2-o(1)} z}.$$
Intuitively, we expect the $\mathbf{X}_{p,l}$ to behave like independent Bernoulli random variables with mean $1/\phi(p)$, which would imply that the random variable $\sum_{l=0}^{H-1} \sum_{p \leq z^{1+o(1)}} \mathbf{X}_{p,l} \log p$ has mean around $H \log z$; thus we have created an ``anti-sieve'' situation in which an unexpectedly large number of small primes $p \leq z^{1+o(1)}$ divide the $\mathbf{p}_{0}^2 \mathbf{p}_{1} \dots \mathbf{p}_{1000} m' + l$.  To make this intuition rigorous, we will use the moment method, taking advantage of the character sum estimates in \Cref{primesum}, together with the randomness of the $1000$ prime factors $\mathbf{p}_1,\dots,\mathbf{p}_{1000}$, to generate enough ``mixing'' to obtain good control on moments.

It will be convenient to treat different classes of primes $p$ separately.  More precisely, we shall establish tbe bounds
\begin{align}
     \P \left( \sum_{l=1}^{H-1} \sum_{p | l} \mathbf{X}_{p,l} \log p \gg H \log^{2-o(1)} z \right) &= 0\label{excep-prime}\\
     \P \left( \sum_{l=1}^{H-1} \sum_{p \leq z^{1/100}: p \nmid l} \mathbf{X}_{p,l} \log p \gg H \log^{2-o(1)} z \right) &\ll \frac{1}{\log^{50-o(1)} z}\label{small-prime}\\
     \P \left( \sum_{l=1}^{H-1} \sum_{z^{1/100} < p \leq z^{1+o(1)}} \mathbf{X}_{p,l} \log p \gg H \log^{2-o(1)} z \right) &\ll \frac{1}{H \log^{2-o(1)} z}.\label{large-prime}
\end{align}
which give the claim by \eqref{hup} and the triangle inequality.

The estimate \eqref{excep-prime} is easy to establish.  This contribution can only occur when $p \ll H \ll \log^{20} x$, and for each such $p$ only $O(H/p)$ values of $l$ contribute. Thus
$$\sum_{l=1}^{H-1} \sum_{p | l} \mathbf{X}_{p,l} \log p \ll  \sum_{p \ll \log^{20} x} \frac{H}{p} \log(\log^{20} x) \ll H \log_2^2 x \asymp H \log_2^2 z$$
giving the claim \eqref{excep-prime}.

\subsection{Small primes}

Now we handle the contribution \eqref{small-prime} of the small primes.  Here we use the moment method with a high moment in order to overcome a certain loss arising from a use of \eqref{hup}.  By Markov's inequality, it suffices to show that
\begin{equation}\label{nsum}
 \E \left( \sum_{l=1}^{H-1} \sum_{p \leq z^{1/100}: p \nmid l} \mathbf{X}_{p,l} \log p \right)^{50} \ll H^{50} \log^{50} z.
\end{equation}

The left-hand side of \eqref{nsum} can be expanded as
$$ \sum_{1 \leq l_1,\dots,l_{50} \leq H-1} \sum_{\substack{p'_1,\dots,p'_{50} \leq z^{1/100} \\ p'_i \nmid l_i}} (\log p'_1) \dots (\log p'_{50}) \E \mathbf{X}_{p'_1,l_1} \dots \mathbf{X}_{p'_{50},l_{50}}.$$
The event $\mathbf{X}_{p'_1,l_1} \dots \mathbf{X}_{p'_{50},l_{50}}=1$ is either impossible to satisfy, or constrains $\mathbf{p}_{0}^2 \mathbf{p}_{1} \dots \mathbf{p}_{1000}$ to a single primitive residue class $a$ modulo $[p'_1,\dots,p'_{50}]$ by the Chinese remainder theorem (we allow $a$ to depend on $m'$, the $p'_i$, and the $l_i$).  In the latter case, we can expand  into Dirichlet characters and use the triangle inequality to obtain
\begin{align*}
|\E \mathbf{X}_{p'_1,l_1} \dots \mathbf{X}_{p'_{50},l_{50}}| &=
\frac{1}{\phi([p'_1,\dots,p'_{50}])} \left|\sum_{\chi\ ([p'_1,\dots,p'_{50}])} \E \chi(\mathbf{p}_0^2 \mathbf{p}_1 \dots \mathbf{p}_{1000}) \overline{\chi}(a)\right| \\
&\ll \frac{1}{[p'_1,\dots,p'_{50}]} \sum_{\chi\ ([p'_1,\dots,p'_{50}])} |\E \chi(\mathbf{p}_0^2 \mathbf{p}_1 \dots \mathbf{p}_{1000})|.
\end{align*}
This bound trivially holds in the former case also.
By independence we can factor
$$\E \chi(\mathbf{p}_0^2 \mathbf{p}_1 \dots \mathbf{p}_{1000}) = (\E \chi(\mathbf{p}_0^2)) \prod_{i=1}^{1000} (\E \chi(\mathbf{p}_i)).$$
We discard the $\E \chi(\mathbf{p}_0^2)$ term and use \eqref{sz-def} and the arithmetic-geometric mean inequality to bound
\begin{equation}\label{amgm}
|\E \chi(\mathbf{p}_0^2 \mathbf{p}_1 \dots \mathbf{p}_{1000})|  \ll \sum_{i=1}^{1000} |s_{P_i}(\chi)|^{1000}.
\end{equation}
Thus by the pigeonhole principle we can bound the left-hand side of \eqref{nsum} by
$$ \ll H^{50} \sum_{p'_1,\dots,p'_{50} \leq z^{1/100}} \frac{(\log p'_1) \dots (\log p'_{50})}{[p'_1,\dots,p'_{50}]} \sum_{\chi\ ([p'_1,\dots,p'_{50}])} |s_{P_i}(\chi)|^{1000}$$
for some $1 \leq i \leq 1000$, which we now fix.

The conductor of $\chi$ is some product $p''_1 \dots p''_j$ of $j$ distinct primes $p''_1 < \dots < p''_j$ that appear amongst the $p'_1,\dots,p'_{50}$ for some $0 \leq j \leq 50$.  If one fixes these $p''_1,\dots,p''_j$ and sums the quantity $\frac{(\log p'_1) \dots (\log p'_{50})}{[p'_1,\dots,p'_{50}]}$ over all $p'_1,\dots,p'_{50}$ that contain the $p''_1,\dots,p''_j$, one can see from many applications of Mertens' theorem that this sum is
$$ \ll \frac{\log p''_1 \dots \log p''_j}{p''_1 \dots p''_j} \log^{50-j} z.$$
Thus the left-hand side of \eqref{nsum} is bounded by
$$ \ll \sum_{j=0}^{50} H^{50} \log^{50-j} z \sum_{p''_1 < \dots < p''_j \leq z^{1/100}} \frac{\log p''_1 \dots \log p''_j}{p''_1 \dots p''_j} \sum_{\chi\ (p''_1 \dots p''_j)}^* |s_{P_i}(\chi)|^{1000},$$
where the asterisk in the sum indicates that $\chi$ is required to be primitive\footnote{Note from \eqref{sz-def} that the primes implicitly being summed in $s_{P_i}(\chi)$ have magnitude $\asymp P_i = z^{1+o(1)}$ and are thus larger than the primes $p''_j = O(z^{1/100})$ dividing the conductor.  It is therefore safe to replace a non-primitive character $\chi$ here by its primitive counterpart.}.  Discarding the bounded factor
$\frac{\log p''_1 \dots \log p''_j}{p''_1 \dots p''_j}$, we now see that to show \eqref{nsum}, it suffices to show that
$$\sum_{p''_1 < \dots < p''_j \leq z^{1/100}} \sum_{\chi\ (p''_1 \dots p''_j)}^* |s_{P_i}(\chi)|^{1000} \ll 1$$
for all $0 \leq j \leq 50$ and all $p''_1 < \dots < p''_j \leq z^{1/100}$.  Each unexceptional character $\chi$ contributes at most $(P_i^{-0.008})^{1000} = z^{-8+o(1)}$ to the left-hand side, while the number of characters $\chi$ is at most $(z^{j/100})^2 = O(z)$, so the contribution contribution of the unexceptional characters.  Meanwhile, from \Cref{primesum} (and discarding all but two of the factors of $|s_{P_i}(\chi)|$) we see that the contribution of the exceptional characters is also acceptable.  The claim follows.

\subsection{Large primes}

It remains to handle the contribution \eqref{large-prime} of the large primes $z^{1/100} < p \leq z^{1+o(1)}$.  Here $\log p \asymp \log z$, so we may rewrite the bound as
$$ \P \left( \sum_{l=1}^{H-1} \sum_{z^{1/100} < p \leq z^{1+o(1)}} \mathbf{X}_{p,l} \gg H \log^{1-o(1)} z \right) \ll \frac{1}{H \log^{2-o(1)} z}.
$$
By Chebyshev's inequality, it will suffice to show the mean and variance bounds
\begin{equation}\label{mean}
    \E \sum_{l=1}^{H-1} \sum_{z^{1/100} < p \leq z^{1+o(1)}} \mathbf{X}_{p,l} \ll H
\end{equation}
\begin{equation}\label{var}
    \Var \left( \sum_{l=1}^{H-1} \sum_{z^{1/100} < p \leq z^{1+o(1)}} \mathbf{X}_{p,l} \right) \ll H.
\end{equation}

The mean estimate \eqref{mean} will follow from

\begin{proposition}[First moment of $\mathbf{X}_{p,l}$]\label{prop-first} Let $1 \leq l \leq H-1$.
\begin{itemize}
    \item[(i)] For $z^{1/100} \ll p \ll z^{1+o(1)}$, one has
\begin{equation}\label{xpl-crude}
\E \mathbf{X}_{p,l} \ll \frac{\log^2 z}{p}.
\end{equation}
    \item[(ii)] For $z^{1/100} \ll P \ll z^{1+o(1)}$, one has the improved bound
\begin{equation}\label{zip-2}
    \E \mathbf{X}_{p,l} = \frac{1}{\phi(p)} + O\left(\frac{1}{P^{1.001}}\right) \ll \frac{1}{p}
\end{equation}
for all but $O(P^{0.02+o(1)})$ primes $p$ in $[P,2P)$.
\end{itemize}
\end{proposition}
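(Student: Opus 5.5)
The plan is to expand $\mathbf{X}_{p,l}$ into Dirichlet characters modulo $p$, as was done for the small primes. For $z^{1/100} \ll p$ we have $p \nmid l$ (since $l < H \le \log^{20} x$). We also have $p \nmid m'$ unless the event is empty; if $p \mid m'$ then $\mathbf{X}_{p,l}=0$, so assume not. Then $\mathbf{X}_{p,l}=1$ iff $\mathbf{p}_0^2\mathbf{p}_1\dots\mathbf{p}_{1000} \equiv a \pmod p$ with $a \coloneqq -l\overline{m'}$ a primitive residue. One caveat: the event also forces $\mathbf{p}_j \ne p$, but the contribution where some $\mathbf{p}_j = p$ is $O(\log z / z^{1-o(1)})$, which is negligible in both claims. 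Orthogonality then gives
$$ \E \mathbf{X}_{p,l} = \frac{1}{\phi(p)} \sum_{\chi\ (p)} \overline{\chi}(a)\, \E\chi(\mathbf{p}_0^2)\prod_{i=1}^{1000}\E \chi(\mathbf{p}_i) = \frac{1}{\phi(p)} + O\Big(\frac{1}{p}\sum_{\chi \neq \chi_0} \prod_{i=1}^{1000} |s_{P_i}(\chi)|\Big),$$
where we again discard the $\mathbf{p}_0$ factor. The characters here are primitive, since $p$ is prime and smaller than the $P_i$ up to $z^{o(1)}$ factors, so the earlier footnote on primitivity applies.

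For part (ii), fix $P$ and apply \Cref{primesum} at each scale $Z = P_i$. We take $q_1 = 1$ and $q_{2,j} = p$, and check that $p \le 2P \le z^{1+o(1)} \le (P_i^{3.09})^{1/2}$, which holds since $P_i = z^{1-o(1)}$. So for each $i$, the number of primes $p \in [P,2P)$ having some exceptional character at scale $P_i$ is at most $O(P_i^{0.016}) \cdot P^{o(1)}$ per $p$-block. Here I would apply the lemma to the family of all exceptional characters of all moduli $p\in[P,2P)$ simultaneously, which gives $O(z^{0.016+o(1)})$ characters in total. Summing over the $1001$ scales and absorbing $z^{o(1)}$ gives $O(P^{0.02+o(1)})$ bad primes; note $P \ge z^{1/100}$, so the relation between $z$ and $P$ must be handled. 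The exponent $0.016\cdot$(ratio) is the delicate point. To keep it clean I would instead apply \Cref{primesum} with $\lambda$ chosen so that the count is measured in $P_i$, and use $P_i \le z^{1+o(1)}$, $P \ge z^{1/100}$. If this loses, I would restrict to bounding by $z^{0.016+o(1)}$ and check it against $P^{0.02}$ only when $P \ge z^{0.8}$, handling smaller $P$ via the crude bound. For a good prime $p$, every nonprincipal $\chi$ mod $p$ is unexceptional at every scale, so the error is at most
$$\frac{1}{p}\cdot p \cdot z^{-0.008\cdot 1000(1-o(1))} = z^{-8+o(1)} \ll P^{-1.001},$$
using $P \le z^{1+o(1)}$.

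For part (i), I would use a single scale, say $i=1$, and bound $\sum_{\chi\ne\chi_0} |s_{P_1}(\chi)|^{1000}$. The unexceptional characters contribute $p \cdot z^{-8}$, which is negligible. For the exceptional ones, $|s|^{1000} \le |s|^2$, and \Cref{primesum} applied to the exceptional characters mod this single $p$ gives $\sum |s|^2 \ll 1$. This would even give $O(1/p)$, so the stated $\log^2 z/p$ bound leaves ample room. If I instead need to keep the $\mathbf{p}_0^2$ factor or avoid the AM-GM step, I would bound $|\E\chi(\mathbf{p}_0^2)\cdots| \le \min_i |s_{P_i}(\chi)|$, which suffices.

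The main obstacle is the bookkeeping in (ii): turning \Cref{primesum}'s bound on exceptional characters at scale $P_i$ into a count of exceptional moduli $p$ in $[P,2P)$, with exponent $0.02$ relative to $P$ rather than $z$. I expect this to need the lemma applied with $Z$ comparable to $P$. Concretely, when $P$ is much smaller than the $P_i$, I would note that exceptionality is defined at scale $P_i$, but the conductor constraint $p\le P_i^{1.545}$ is satisfied with room. I would then apply the lemma to the family of all exceptional characters of prime moduli in $[P,2P)$ to get $\ll P_i^{0.016}$, and combine this with a more careful estimate if $P_i^{0.016}$ exceeds $P^{0.02}$. For $P$ near $z^{1/100}$ this requires a different input, possibly a large-sieve-type bound for characters of modulus $\sim P$.
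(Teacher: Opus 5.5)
Your part (i) is correct, but it takes a different route from the paper. The paper freezes every prime except $\mathbf{p}_1,\mathbf{p}_2$ and counts the values of $\mathbf{p}_1\mathbf{p}_2$ lying in one residue class mod $p$. That argument is elementary (only the prime number theorem) and loses the factor $\log^2 z$. You instead expand in characters mod $p$ and apply the $\ell^2$ bound $\sum_j |s_{P_i}(\chi_j)|^2 \ll 1$ of \Cref{primesum} to the exceptional characters of the single modulus $p$. This costs the Burgess/Bombieri--Hal\'asz--Montgomery input, but it gives the stronger bound $O(1/p)$ for every $p$.

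Two small corrections to (i). First, keep the AM--GM step \eqref{amgm} and treat every scale $P_i$ identically. Your alternative $\min_i |s_{P_i}(\chi)|$ does not suffice, because the unexceptional characters would then contribute about $p\,P_1^{-0.008}$. Second, the event $\mathbf{p}_j = p$ enters only through the principal-character term. Its effect is therefore $\frac{1}{\phi(p)}\,O(z^{-1+o(1)})$, not $O(z^{-1+o(1)})$, and only the former is $O(P^{-1.001})$ when $P$ is close to $z$.

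The genuine gap is in part (ii) for $z^{1/100} \le P < z^{0.8}$ (roughly). Your only way of discarding primes is to throw out every modulus $p \in [P,2P)$ that carries a character exceptional at some scale $P_i$. \Cref{primesum} bounds the number of such characters by $O(P_i^{0.016}) = O(z^{0.016+o(1)})$, and this is $O(P^{0.02+o(1)})$ only once $P \ge z^{0.8-o(1)}$. None of your fallbacks closes the remaining range:
\begin{itemize}
\item The crude bound of (i) is only an upper bound, whereas (ii) asserts the asymptotic $\frac{1}{\phi(p)} + O(P^{-1.001})$, which the later covariance estimate needs.
\item The scale $Z$ cannot be moved to something comparable to $P$, since it is $s_{P_i}(\chi)$ that appears in the expansion.
\item No extra large-sieve input is needed.
\end{itemize}

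The missing observation is that you do not need every non-principal character mod $p$ to be unexceptional. You only need $\sum_{\chi \neq \chi_0} |s_{P_i}(\chi)|^{1000} \ll P^{-0.001}$ for each $i$, and an exceptional character with $|s_{P_i}(\chi)| \le P^{-0.01}$ contributes just $P^{-10}$. So apply the last clause of \Cref{primesum} at $Z = P_i$ with $q_1 = 1$, to all exceptional characters of all prime moduli in $[P,2P)$, with threshold $\lambda \coloneqq \max(P^{-0.01}, P_i^{-0.008})$. This $\lambda$ is admissible, so all but $O(\lambda^{-2}) = O(\min(P^{0.02}, P_i^{0.016}))$ of these characters satisfy $|s_{P_i}(\chi)| \le \lambda$.

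Discarding the moduli of the exceptions, for each of the $1000$ values of $i$, removes only $O(P^{0.02})$ primes. For every remaining $p$, each non-principal $\chi$ mod $p$ has $|s_{P_i}(\chi)| \le \lambda$ (the unexceptional ones automatically). Hence $\sum_{\chi \neq \chi_0} |s_{P_i}(\chi)|^{1000} \le 2P\lambda^{1000} \ll P^{-9} + z^{-7+o(1)} \ll P^{-0.001}$, and dividing by $\phi(p)$ gives the $O(P^{-1.001})$ error.

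This is the paper's argument. The paper takes $\lambda = P^{-0.01}$; the maximum with $P_i^{-0.008}$ merely keeps $\lambda$ in the admissible range when $P$ is close to $z$.
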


Indeed, from \Cref{prop-first} and the prime number theorem one has
$$ \sum_{P \leq p < 2P} \E \mathbf{X}_{p,l} \ll \frac{1}{\log P} + \frac{P^{0.02+o(1)} \log^2 z}{P} \ll \frac{1}{\log z}$$
for any $z^{1/100} \ll P \ll z^{1+o(1)}$, and the claim \eqref{mean} then follows from dyadic summation and the triangle inequality.

\begin{proof}  We begin with (i). Freeze all random variables except for $\mathbf{p}_1$ and $\mathbf{p}_2$.  The event $\mathbf{X}_{p,l}=1$ forces $\mathbf{p}_1\mathbf{p}_2$ to lie in some residue class modulo $p$; on the other hand, by the prime number theorem $\mathbf{p}_1\mathbf{p}_2$ is takes $\asymp \frac{P_1 P_2}{\log^2 z}$ values in $[P_1P_2, 4P_1P_2)$, with each value attained with probability $\asymp \frac{\log^2 z}{P_1 P_2}$.  Since $p, P_1, P_2 = z^{1+o(1)}$, this residue class intersects $[P_1P_2, 4P_1P_2)$ in $O( \frac{P_1 P_2}{p} )$ values, giving the claim \eqref{xpl-crude}.

Now we establish (ii).  The condition $\mathbf{X}_{p,l}=1$ is equivalent to a congruence condition
$$ \mathbf{p}_0^2 \mathbf{p}_1 \dots \mathbf{p}_{1000} = a\ (p)$$
for some residue class $a$ depending on $p, m', l$.  We can exclude the cases where $p$ divides $m'$, as this only occurs $O(\log x)$ times; and as $l \leq H \ll \log^{20} x$ and $p \geq z^{1/100}$, $p$ also does not divide $l$.  Thus $a$ is coprime to $p$.  We can then perform a character expansion
$$ 1_{\mathbf{p}_0^2\mathbf{p}_1 \dots \mathbf{p}_{1000} = a\ (p)} = \frac{1}{\phi(p)} \left(1  + \sum_{\chi\ (p)}^* \chi(\mathbf{p}_0^2\mathbf{p}_1 \dots \mathbf{p}_{1000}) \overline{\chi}(a) \right) + O\left(\frac{1}{P^{1.001}}\right) $$
where the error term $O\left(\frac{1}{P^{1.001}}\right)$ comes from the (quite rare) events that one of the $\mathbf{p}_i$ is equal to $p$, causing the principal character $\chi_0(\mathbf{p}_0^2\mathbf{p}_1 \dots \mathbf{p}_{1000})$ to vanish.  On taking expectations and using \eqref{amgm} and the triangle inequality, we will establish \eqref{zip-2} whenever
\begin{equation}\label{ta}
\sum_{\chi\ (p)}^* |s_{P_i}(\chi)|^{1000} \ll \frac{1}{P^{0.001}}
\end{equation}
for all $i=1,\dots,1000$.
But by \Cref{primesum} (with $q_1=1$), we have for each $i$ that for all but $O(P^{0.02+o(1)})$ choices of $p$, one has $|s_{P_i}(\chi)| \ll P^{-0.01}$ for all primitive characters $\chi$ of conductor $p$; and the claim follows.  This concludes the proof of \eqref{zip-2}.
\end{proof}

We now prove \eqref{var}.  The variable $X_{p,l}$ vanishes if $p$ divides $m'$, so we remove those variables from consideration. The left-hand side expands as
$$
\sum_{1 \leq l,l' \leq H-1} \sum_{\substack{z^{1/100}\leq p,p' \leq z^{1+o(1)} \\ p,p' \nmid m'}} \Cov \left( \mathbf{X}_{p,l}, \mathbf{X}_{p',l'} \right).
$$
The diagonal cases $(p,l) = (p',l')$ are acceptable thanks to \eqref{mean} and Mertens' theorem, since the variance of a boolean random variable is bounded by the mean.  If $p=p'$ and $l \neq l'$, then $\mathbf{X}_{p,l} \mathbf{X}_{p',l'}$ vanishes since $p \geq z^{1/100}$ does not divide $|l-l'| \leq \log^{20} z$.  Hence the covariances of those terms are negative and can be discarded. Thus by symmetry we may assume that $p < p'$. It now suffices to show that
\begin{equation}\label{pout}
\sum_{1 \leq l,l' \leq H-1} \sum_{\substack{z^{1/100} \leq p < p' \leq z^{1+o(1)} \\ p, p' \nmid m'}} \Cov \left( \mathbf{X}_{p,l}, \mathbf{X}_{p',l'} \right) \ll H.
\end{equation}

We have the following analogue of \Cref{prop-first}:

\begin{proposition}[Second moment of $\mathbf{X}_{p,l}$]\label{prop-second} Let $1 \leq l,l' \leq H-1$.
\begin{itemize}
    \item[(i)] For $z^{1/100} \ll p < p' \ll z^{1+o(1)}$, one has
\begin{equation}\label{cortriv}
 \E \mathbf{X}_{p,l} \mathbf{X}_{p',l'} \ll \frac{\log^3 z}{p p'}.
\end{equation}
    \item[(ii)] For $z^{1/100} \ll P \leq P' \ll z^{1+o(1)}$, one has the improved bound
\begin{equation}\label{lop}
    \E \mathbf{X}_{p,l} \mathbf{X}_{p',l'} = \frac{1}{\phi(pp')} + O\left(\frac{1}{P^{1.001} P'}\right)
\end{equation}
for all but $O(P^{0.02+o(1)} P')$ of the pairs $(p,p')$ of primes in $[P,2P) \times [P', 2P')$ with $p < p'$.
\end{itemize}
\end{proposition}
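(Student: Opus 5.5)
The plan is to mirror the proof of \Cref{prop-first}, now with the modulus $pp'$ in place of $p$.

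For part (i), I will freeze every random variable except $\mathbf{p}_1,\mathbf{p}_2,\mathbf{p}_3$. The event $\mathbf{X}_{p,l}\mathbf{X}_{p',l'}=1$ requires $p \ne p'$, and it forces the product $\mathbf{p}_1\mathbf{p}_2\mathbf{p}_3$ into a single residue class modulo $pp'$ (or into no class at all). This follows from the Chinese remainder theorem once $p,p'\nmid m'$, and it uses that the frozen factors are invertible modulo $p$ and $p'$. We may assume these factors are coprime to $pp'$: the events where some frozen $\mathbf{p}_i$ equals $p$ or $p'$ have probability $O(\log z/z^{1-o(1)})$ and can be handled separately, or simply absorbed, since $\log^3 z/(pp') \gg z^{-2-o(1)}$.

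By the prime number theorem, each value of $\mathbf{p}_1\mathbf{p}_2\mathbf{p}_3$ in $[P_1P_2P_3, 8P_1P_2P_3)$ occurs with probability $\ll \log^3 z/(P_1P_2P_3)$. The number of integers in that range lying in a fixed class mod $pp'$ is $O(P_1P_2P_3/(pp')+1)$. Since $pp' \le z^{2+o(1)}$ while $P_1P_2P_3 = z^{3-o(1)}$, the $+1$ term is negligible. This yields \eqref{cortriv}.

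For part (ii), I will expand the joint indicator of $\mathbf{p}_0^2\mathbf{p}_1\cdots\mathbf{p}_{1000} \equiv a \pmod{pp'}$ into Dirichlet characters modulo $pp'$. The principal character contributes $1/\phi(pp')$, with an error $O(1/(P^{1.001}P'))$ coming from the rare events where some $\mathbf{p}_i \in \{p,p'\}$. Every non-principal $\chi$ mod $pp'$ is induced by a primitive character of conductor $p$, $p'$, or $pp'$. By \eqref{amgm}, the error is then bounded by
$$\frac{1}{\phi(pp')}\sum_{i=1}^{1000}\Big(\sum_{\chi\ (p)}^*+\sum_{\chi\ (p')}^*+\sum_{\chi\ (pp')}^*\Big)|s_{P_i}(\chi)|^{1000}.$$
It therefore suffices to show that, outside the exceptional set of pairs, each of these three sums is $\ll P^{-0.001}$.

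The sums over conductor $p$ and conductor $p'$ are handled exactly as in \Cref{prop-first}(ii). There are $O(P^{0.02+o(1)})$ bad $p$, and pairing each with any $p'$ gives $O(P^{0.02+o(1)}P')$ bad pairs. There are $O(P'^{0.02+o(1)})$ bad $p'$, and pairing each with any $p$ gives $O(PP'^{0.02+o(1)})$ bad pairs. This second count is dominated by $P^{0.02}P'$ only when $P'^{0.02} \le P^{0.02}P'/P$, which holds because $P\le P'$.

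The main obstacle is the conductor-$pp'$ characters. Here I will apply \Cref{primesum} with $q_1=p$ and $q_{2,j}$ ranging over primes $p'\in[P',2P')$. The hypotheses are met: $p\le z^{3.09}$, and $p' \le (Z^{3.09}/p)^{1/2}$ since $Z^{3.09}/p \ge z^{2.09-o(1)}$ while $p'^2 \le z^{2+o(1)}$. For fixed $p$, taking $\lambda = P^{-0.01}$ shows that at most $O(P^{0.02})$ primitive characters of modulus $pp'$, over all $p'$, have $|s_{P_i}(\chi)| > P^{-0.01}$ (here $\lambda\ge Z^{-0.008}$ requires $P^{0.01}\le P_i^{0.008}$, which holds since $P \ge z^{1/100}$). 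Each such character rules out only one $p'$, so summing over $p$ gives $O(P^{1.02+o(1)})$ bad pairs, which is acceptable.

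For the remaining pairs, every primitive character of modulus $pp'$ has $|s_{P_i}(\chi)|\le P^{-0.01}$. There are at most $pp' \le z^{2+o(1)}$ such characters, so their contribution is $\le z^{2+o(1)}P^{-10}$. This is small enough provided $P \ge z^{0.21}$, say. That restriction is the delicate point: for smaller $P$ I would instead use the alternative threshold $|s|\le Z^{-0.008}$ to get contribution $z^{2-8+o(1)}$, at the cost of $O(Z^{0.016})$ exceptional characters per $p$. Controlling that cost is the step I expect to require the most careful bookkeeping of exponents.
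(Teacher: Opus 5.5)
Your part (i), the character expansion in (ii), and your treatment of the conductor-$p$ and conductor-$p'$ characters match the paper. The gap is in the conductor-$pp'$ step. You leave it open, and the fallback you sketch does not close it.

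With your threshold $\lambda = P^{-0.01}$, the surviving contribution can be as large as $4PP'\cdot P^{-10}$. This is $\ll P^{-0.001}$ only when $P' \lesssim P^{9}$, so it fails for instance at $P = z^{1/100}$, $P' = z$. Your remedy for small $P$ is the threshold $Z^{-0.008}$, at a cost of $O(Z^{0.016})$ exceptional characters per $p$. That remedy fails in the opposite regime. For $P = P' = z^{1/100}$ it gives $O(P z^{0.016+o(1)}) = z^{0.026+o(1)}$ exceptional pairs. That is more than the $\asymp z^{0.02}/\log^2 z$ pairs that exist, whereas you are only allowed $P^{0.02+o(1)}P' = z^{0.0102+o(1)}$. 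So splitting according to the size of $P$ alone cannot close the argument.

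The missing idea is to tie the threshold to the larger scale $P'$, as the paper does. Apply \Cref{primesum} with $q_1 = p$ and $\lambda = \max(P'^{-0.01}, P_i^{-0.008})$. For fixed $p$, at most $O(\lambda^{-2}) = O(P'^{0.02+o(1)})$ primitive characters of conductor $pp'$ have $|s_{P_i}(\chi)| > \lambda$, and each such character determines its $p'$. This gives $O(P P'^{0.02+o(1)}) \le O(P^{0.02+o(1)}P')$ bad pairs, using $P \le P'$. For the remaining pairs the sum is at most $4PP'\lambda^{1000} \le 4PP'\max(P'^{-10}, P_i^{-8}) \ll P^{-0.001}$. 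Your two thresholds would also work if you split according to whether $P' \le P^{8}$, rather than according to the size of $P$ alone.

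Two smaller corrections:
\begin{itemize}
\item The inequality $P^{-0.01} \ge P_i^{-0.008}$ does not hold ``since $P \ge z^{1/100}$.'' It is an upper bound on $P$, namely $P \le P_i^{0.8}$, and it fails near $z$. That is why the maximum with $P_i^{-0.008}$ is needed.
\item In (i), the events where a frozen $\mathbf{p}_i \in \{p,p'\}$ cannot be absorbed when $p,p'$ are near $z$. Their probability is $z^{-1+o(1)}$, which exceeds the target $z^{-2+o(1)}$. They contribute nothing, however: on those events $\mathbf{X}_{p,l}\mathbf{X}_{p',l'} = 0$, because $p \nmid l$ and $p' \nmid l'$.
\end{itemize}
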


Assuming this proposition, then for any $z^{1/100} \ll P \leq P' \ll z^{1+o(1)}$ we see from \eqref{xpl-crude}, \eqref{cortriv} that
$$ \Cov\left( \mathbf{X}_{p,l}, \mathbf{X}_{p',l'}\right) \ll \frac{\log^3 z}{PP'}$$
for all pairs $(p,p') \in [P,2P) \times [P', 2P')$ with $p < p'$, while from \eqref{zip-2}, \eqref{lop} we have the improved bound
$$ \Cov\left( \mathbf{X}_{p,l}, \mathbf{X}_{p',l'}\right) \ll \frac{1}{P^{1.001} P'} $$
for all but $O(P^{0.02+o(1)} P')$ of the pairs.  Summing,
we conclude that
$$
\sum_{1 \leq l,l' \leq H-1} \sum_{\substack{P \leq p < 2P \\ p \nmid m}} \sum_{\substack{P' \leq p' < 2P' \\ p' \nmid m, p' > p}} \Cov \left( \mathbf{X}_{p,l}, \mathbf{X}_{p',l'} \right) \ll \frac{H^2 \log^3 z}{P^{0.001}}  $$
and on summing over dyadic $z^{1/100} \leq P \leq P' \leq z^{1+o(1)}$ and using \eqref{hup} we obtain \eqref{pout} as desired (with room to spare).

\begin{proof}
    We begin with (i). After freezing all the $\mathbf{p}_i$ except for $\mathbf{p}_1, \mathbf{p}_2, \mathbf{p}_3$, the event $\mathbf{X}_{p,l} \mathbf{X}_{p',l'}=1$ is either empty, or equivalent by the Chinese remainder theorem to a congruence condition
$$ \mathbf{p}_1 \mathbf{p}_2 \mathbf{p}_3 = a\ (p p')$$
for some primitive residue class $a$ depending on $p, p', m', l, l'$ and the frozen primes.  This restricts $\mathbf{p}_1 \mathbf{p}_2 \mathbf{p}_3$ to an arithmetic progression in $[P_1P_2P_3, 8P_1P_2P_3]$ of size $O(P_1 P_2 P_3 / pp')$, but by the prime number theorem this product takes $\frac{\asymp P_1 P_2 P_3 }{ \log^3 z}$ values, each of which is attained with probability $\asymp \frac{\log^3 z}{P_1 P_2 P_3}$, giving the claim \eqref{cortriv}.

Now we establish \eqref{lop} for all but $O(P^{0.02+o(1)} P' + P (P')^{0.02+o(1)})$ of  the pairs $(p,p')$.  The condition $\mathbf{X}_{p,l} \mathbf{X}_{p',l'}=1$ is equivalent to a congruence condition
$$ \mathbf{p}_{0}^2 \mathbf{p}_{1} \dots \mathbf{p}_{1000} = a\ (pp')$$
for some primitive residue class $a$.  By expansion into characters as before, we conclude that
$$ \E \mathbf{X}_{p,l} \mathbf{X}_{p',l'} = \frac{1}{\phi(pp')} \left( 1 + \sum_{\substack{\chi\ (pp') \\ \chi \neq \chi_0}} |\E \chi(\mathbf{p}_{0}^2 \mathbf{p}_{1} \dots \mathbf{p}_{1000})| \right) + O\left( \frac{1}{P^{1.001} P'}\right).$$
Thus, to obtain \eqref{lop}, it will suffice by \eqref{amgm} to show that
\begin{equation}\label{stomp}
 \sum_{\substack{\chi\ (pp') \\ \chi \neq \chi_0}} |s_{P_i}(\chi)|^{1000} \ll P^{-0.001}
\end{equation}
for each $i=1,\dots,1000$.
By \eqref{ta}, the contribution of those $\chi$ that have conductor $p$ will be acceptable after removing $O(P^{0.02+o(1)})$ choices of $p$.  Similarly, the contribution of those $\chi$ that have conductor $p'$ will be acceptable after removing $O((P')^{0.02+o(1)})$ choices of $p'$.  Thus we may restrict attention to the primitive characters $\chi$ of conductor $pp'$. If we fix $p$, then from \Cref{primesum} (with $q_1=p$) we see that for all but $O((P')^{0.02+o(1)})$ choices of $p'$, one has $|s_{P_i}(\chi)| \ll (P')^{-0.01}$ for all $i=1,\dots,1000$ and all primitive characters $\chi$ of conductor $pp'$, which would imply \eqref{stomp} since
$$ (PP') ((P')^{-0.01})^{1000} \ll P^{-0.001}.$$
There are only $O(P (P')^{0.02+o(1)})$ pairs $(p,p')$ not covered by this argument, giving \eqref{lop} with the desired bound on the exceptional pairs (noting that $P \leq P'$).
\end{proof}

This completes the proof of \Cref{prop-key}, and hence \Cref{bad-thm}.

\bibliographystyle{amsplain}

\end{document}